\documentclass[11pt,a4paper]{article}

\usepackage[left=2.3cm, top=2.3cm,bottom=2.3cm,right=2.3cm]{geometry}

\usepackage{mathtools,amssymb,amsthm,mathrsfs,calc,graphicx,stmaryrd,xcolor,dsfont,bbm,float,array,epsfig,hyperref,color}
\usepackage[numbers,square]{natbib}
\usepackage[british]{babel}
\usepackage{amsfonts,amsmath,amsthm,amssymb}
\usepackage{verbatim}
\usepackage{multirow}
\usepackage{tikz}

\pdfminorversion=4

\numberwithin{equation}{section}
\numberwithin{figure}{section}

\newtheorem{thm}{Theorem}[section]
\newtheorem{lem}[thm]{Lemma}
\newtheorem{cor}[thm]{Corollary}
\newtheorem{prop}[thm]{Proposition}

\theoremstyle{remark}
\newtheorem{rem}[thm]{Remark}

\theoremstyle{definition}

\numberwithin{equation}{section}

\def\P{{\mathbb{P}}}
\def\Z{{\mathbb{Z}}}
\def\mz{{\mathbb{Z}}}
\def\R{{\mathbb{R}}}
\def\V{{\mathbb{V}}}
\def\F{{\mathcal{F}}}

\newcommand{\od}{\overset{{\rm d}}{=}}
\newcommand{\dod}{\overset{\rm{d}}{\longrightarrow}}
\newcommand{\topr}{\overset{\mathbb{P}}{\longrightarrow}}

\newcommand{\PP}{\mathbb{P}}
\newcommand{\E}{\mathbb{E}}
\newcommand{\me}{\mathbb{E}}
\newcommand{\Y}{\mathbb{Y}}
\newcommand{\G}{\mathcal{G}}
\newcommand{\X}{\mathcal{X}}

\renewcommand{\P}{\mathbb{P}}
\newcommand{\NN}{\mathbb{N}}
\newcommand{\N}{\mathbb{N}}

\newcommand{\ZZ}{\mathbb{Z}}
\newcommand{\W}{\mathbb{W}}
\newcommand{\Zc}{Z^{\rm crit}}
\newcommand{\Wc}{W^{\rm crit}}
\newcommand{\Yc}{Y^{\rm crit}}

\newcommand{\s}{\sigma}
\newcommand{\8}{\infty}
\newcommand{\wt}{\widetilde}
\renewcommand{\a}{\alpha}

\renewcommand{\l}{\lambda}

\newcommand{\ma}{\color{magenta}}

\begin{document}

\title{\bfseries  Stable limit laws for random walk in a sparse random environment I: moderate sparsity}

\author{Dariusz Buraczewski, Piotr Dyszewski, Alexander Iksanov,\\ Alexander Marynych and Alexander Roitershtein}

\date{}

\maketitle

\begin{abstract}
A random walk in a sparse random environment is a model introduced
by Matzavinos~et~al. [Electron. J. Probab. 21, paper no.~72: 2016]
as a generalization of both a simple symmetric random walk and a
classical random walk in a random environment. A random walk
$(X_n)_{n\in \N\cup\{0\}}$ in a sparse random environment
$(S_k,\lambda_k)_{k\in\mz}$ is a nearest neighbor random walk on
$\mz$ that jumps to the left or to the right with probability
$1/2$ from every point of $\mz\setminus
\{\ldots,S_{-1},S_0=0,S_1,\ldots\}$ and jumps to the right (left)
with the random probability $\lambda_{k+1}$
($1-\lambda_{k+1}$) from the point $S_k$, $k\in\mz$. Assuming
that $(S_k-S_{k-1},\lambda_k)_{k\in\mz}$ are independent copies of
a random vector $(\xi,\lambda)\in \mathbb{N}\times (0,1)$ and the
mean $\me \xi$ is finite (moderate sparsity) we obtain stable
limit laws for $X_n$, properly normalized and centered, as
$n\to\infty$. While the case $\xi\leq M$ a.s.\ for some
deterministic $M>0$ (weak sparsity) was analyzed by
Matzavinos~et~al., the case $\me \xi=\infty$ (strong sparsity)
will be analyzed in a forthcoming paper. \noindent
\\
{\bf Keywords}: branching process in a random environment with immigration;  perpetuity; random difference equation; random walk in a random environment.\\
{\bf MSC 2010}. Primary: 60K37; Secondary: 60F05, 60F15, 60J80.
\end{abstract}

\tableofcontents

\section{Introduction}
Simple random walks on $\mathbb{Z}$ (the set of integers) arise in
various areas of classical and modern stochastics. However, their
intrinsic homogeneity reduces in some situations applicability of
the simple random walks. Solomon~\cite{solomon1975random}
eliminated this drawback by introducing a random environment which
made a modified random walk space inhomogeneous. In the present
article we investigate an intermediate model, called random walk
in a sparse random environment (RWSRE), in which homogeneity of an
environment is only perturbed on a sparse subset of $\mathbb{Z}$.
Since RWSRE is a particular case of a random walk in a random
environment (RWRE) we proceed by recalling the definition of the
latter.

Set $\Omega = (0,1)^\Z$ and $\mathcal{X} = \Z^\N$. Let $\F$ be the
Borel $\sigma$-algebra of subsets of $\Omega$, $P$ a probability
measure on $(\Omega, \F)$ and $\G$ the $\sigma$-algebra generated
by the cylinder sets in $\mathcal{X}$. A random environment is a
random element $\omega = (\omega_n)_{n\in \Z}$ of the measurable
space $(\Omega,\F)$ distributed according to $P$. A quenched
(fixed) environment $\omega$ provides us with a probability
measure $\P_\omega$ on $\X$ whose transition kernel is given by
$$\P_\omega \{X_{n+1}=j| X_n = i\} = \left\{
\begin{array}{cl}
\omega_i, & \mbox{if } j=i+1,\\
1- \omega_i,\ & \mbox{if } j=i-1,\\
0, & \mbox{otherwise.}
\end{array}\right.$$
With the initial condition $X_0:=0$ the sequence $X=(X_n)_{n\in
\N_0}$ is a Markov chain on $\Z$ (under $\P_\omega$) which is
called random walk in the random environment $\omega$. Here and
hereafter, $\N_0:=\N\cup\{0\}$. It is natural to investigate RWRE
from two viewpoints which are different in many aspects: under the
quenched measure $\P_\omega$ for almost all (with respect to $P$)
$\omega$, that is, for a typical $\omega$ or under an annealed
measure. Formally, the annealed measure $\P$ on $(\Omega\times \X,
\F\otimes \G)$ is defined as a semi-direct product $\P = P \ltimes
\P_{\omega}$ via the formula $$\P\{F\times G\} = \int_F
\P_{\omega}\{G\} P({\rm d}\omega),\quad F\in \F,\quad G\in \G.$$
Note that in general $X$ is no longer a Markov chain under $\P$.
Usually one assumes that an environment $\omega$ forms a
stationary and ergodic sequence or even a sequence of iid
(independent and identically distributed) random variables. In
this setting RWRE has attracted a fair amount of attention among
probabilistic community resulting in quenched and annealed limit
theorems~\cite{bouchet2016quenched,dolgopyat2012quenched,enriquez2009limit,kesten1986limit,kesten1975limit,sinai1982limit,sznitman1999law}
and large deviations~\cite{buraczewski2017precise,
comets2000quenched,dembo1996tail,
gantert1998quenched,greven1994large,pisztora1999large,pisztora1999precise,varadhan2003large,zerner1998lyapounov}.
This list of references is far from being complete.

We aim at establishing annealed limit theorems for $X$ (that is,
under $\P$) in a so called sparse random environment which
corresponds to a particular choice of $P$ which is specified as
follows.  Let $((\xi_k,\lambda_k))_{k\in \Z}$ be a sequence of
independent copies of a random vector $(\xi,\lambda)$ which
satisfies $\lambda \in (0,1)$ and $\xi \in \N$ a.s.  For $n\in
\Z$, set
$$S_n = \left\{ \begin{array}{cl}
\sum_{k=1}^n \xi_k, & \mbox{if } n>0, \\
0, & \mbox{if } n=0, \\
-\sum_{k=n+1}^{ 0} \xi_k, & \mbox{if } n<0.
\end{array} \right.
$$
The sparse random environment $\omega = (\omega_n)_{n\in\Z}$ is defined by
\begin{equation}\label{eq:sparse}
\omega_n = \left\{
\begin{array}{ll}
\l_{k+1}, \ & \mbox{if $n=S_k$ for some }k\in\Z,\\
\frac{1}{2}, \ & \mbox{otherwise.}
\end{array}\right.
\end{equation}
The model (with $\lambda_k$ in \eqref{eq:sparse} replacing
$\lambda_{k+1}$) was introduced by Matzavinos, Roitershtein and
Seol~\cite{matzavinos2016random}. These authors obtained various
results including a recurrence/transience criterion, a strong law
of large numbers and limit theorems. However, many results in
\cite{matzavinos2016random} were proved under quite restrictive
conditions including boundedness of $\xi$, a strong ellipticity
condition for the distribution of $\lambda$ and independence of
$\xi$ and $\lambda$. In this setting some essential properties of
$X$ remain hidden. Our main purpose is to relax the aforementioned
assumptions substantially, thereby establishing limit theorems in
full generality, and to find out how distributional properties of
the vector $(\xi,\lambda)$ affect the asymptotic behavior of $X$.
It turns out that the asymptotics of $X$ is regulated by the tail
behaviors of $\xi$ and $\rho:=(1-\lambda)/\lambda$ which determine
sparsity of the environment and the local drift of the
environment, respectively. In this paper we investigate the case
where $\me\xi<\infty$. We call the corresponding environment
`moderately sparse', whereas in the opposite case where
$\me\xi=\infty$ we say that the environment is `strongly sparse'.
The analysis of $X$ in a strongly sparse environment requires
completely different techniques and will be carried out in a
companion paper \cite{BurDysIksMarRoi:2018+}.

The present article is organized as follows. In Section
\ref{sec:results} we formulate our limit theorems for $X$ and the
first passage times of $X$. In Section \ref{sec:bpre1} we describe
our approach and define a branching process $Z$ in a random
environment which is used to analyze the random walk $X$. In
Section \ref{subsec:notation} we introduce necessary notation
related to the process $Z$. In Section \ref{sec:strategy} we
explain a heuristic behind our proof and present a number of
important estimates and decompositions used throughout the paper.
Among other things, we demonstrate in this section how to reduce
the initial problem to the asymptotic analysis of sums of certain
iid random variables. The tail behavior of these variables is
discussed in Section \ref{sec:tails}. Section \ref{sec:GW} is
devoted to the analysis of a particular critical Galton--Watson
process with immigration which naturally arises in the context of
random walks in the sparse random environment. The proofs of the
main results are given in Sections \ref{sec:lln}, \ref{7.2} and
\ref{7.3}. The proofs of auxiliary lemmas can be found in Section
\ref{sec:proofs_lemmas} and the Appendix.

\section{Main results}\label{sec:results}

We focus on the case when $X$ is $\P$-a.s.~transient to $+\infty$
and the environment is moderately sparse, that is, $\me
\xi<\infty$. Recall the notation $$\rho = \frac{1-\l}{\l}.$$
According to Theorem 3.1 in \cite{matzavinos2016random}, $X$ is
$\P$-a.s.~transient to $+\infty$ if
\begin{equation}\label{eq:right_transience}
\E \log \rho\in [-\infty, 0) \quad \text{and}\quad \E\log
\xi<\infty.
\end{equation}
The first inequality excludes the degenerate case $\rho=1$ a.s.\
in which $X$ becomes a simple random walk. The second inequality
is always true for the moderately sparse environment. We note
right away that our standing assumptions $\E \log \rho\in
[-\infty, 0)$ and $\E\xi<\infty$ hold under the conditions of our
main results, Theorems \ref{thm:main1T} and \ref{thm:main2T}.

The sequence $(T_n)_{n\in\mz}$ of the first passage times defined by
$$ T_n = \inf\{k\geq 0: X_k = n \},\quad n\in\mz$$ is of crucial importance for our arguments. Of course, the observation that the asymptotics of
$X$ can be derived from that of $(T_n)$ is not new and has been
exploited in many earlier papers in the area of random walks in
random environments. Assuming only transience to the right it is
shown on p.~12 in \cite{matzavinos2016random} that
$$\lim_{n\to\infty}\frac{T_{S_n}}{n}=\me T_{S_1}\quad
\P-\text{a.s.}$$ This in combination with Lemma 4.4 in
\cite{matzavinos2016random} leads to the conclusion that
\begin{equation}\label{def_v}
\lim_{n \to \infty} \frac{X_n}{n} =\me\xi/ \me T_{S_1}=: v\quad
\text{and}\quad \lim_{n \to \infty} \frac{T_n}{n}
=\frac{1}{v}\quad \P-\text{a.s.}
\end{equation}
whenever the environment is moderately sparse. Furthermore, under
the additional assumption that $\xi$ and $\lambda$ are
independent, Theorem~3.3 in~\cite{matzavinos2016random} states
that
\begin{equation}\label{eq:v_in_roiterstein_et_al}
v = \frac{(1-\E\rho)\E \xi }{ (1-\E\rho)\E \xi^2 + 2\E\rho (\E \xi)^2}
\end{equation}
provided that $\E \rho <1$ and $\E \xi^2<\infty$, and $v=0$, otherwise.

In Proposition \ref{thm:lln} we give an explicit formula for $v$
when $\xi$ and $\lambda$ are allowed to be dependent.
\begin{prop}\label{thm:lln}
Assume that $\E \log \rho\in [-\infty, 0)$ and $\E \xi<  \infty$.
Then
\begin{equation}\label{formv}
v =\frac{(1-\E \rho)\E \xi}{ (1-\E \rho)\E \xi^2 + 2 \E \xi \E
\rho\xi}, \qquad\frac{1}{v} = \frac{1}{\E \xi} \left( \E \xi^2 +
\frac{2 \E \xi \E\rho \xi}{1- \E \rho} \right)
\end{equation}
provided that $\E\rho<1$, $\E \rho\xi<\infty$ and $\E
\xi^2<\infty$, and $v=0$ ($1/v=\infty$), otherwise.
\end{prop}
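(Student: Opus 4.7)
The plan is to compute $\E T_{S_1}$ and then apply \eqref{def_v}, which gives $v=\E\xi/\E T_{S_1}$. Decompose $T_{S_1}=\sum_{k=0}^{S_1-1}(T_{k+1}-T_k)$ and let $u_k$ be the quenched expected time to hit $k+1$ starting from $k$; a standard one-step computation yields the recursion $u_k = 1/\omega_k + \rho^\omega_k\,u_{k-1}$, where $\rho^\omega_k = (1-\omega_k)/\omega_k$. On the non-defect interval $k\in\{1,\ldots,S_1-1\}$ one has $\omega_k = 1/2$, hence $u_k = u_{k-1}+2$ and thus $u_k = u_0 + 2k$. Consequently $\sum_{k=0}^{S_1-1}u_k = S_1\,u_0 + S_1(S_1-1)$, so taking annealed expectation,
\begin{equation*}
\E T_{S_1} \;=\; \E[\xi_1 u_0] + \E\xi^2 - \E\xi.
\end{equation*}

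The heart of the proof is the evaluation of $\E[\xi_1 u_0]$. Setting $V_j := u_{S_j}$ and combining $1/\lambda = 1+\rho$ with $u_{S_j-1} = V_{j-1} + 2(\xi_j-1)$ at each defect, one arrives at the random difference equation
\begin{equation*}
V_j \;=\; \rho_{j+1}\,V_{j-1} + \bigl(1-\rho_{j+1}+2\rho_{j+1}\xi_j\bigr).
\end{equation*}
Under the paper's convention $\omega_{S_k}=\lambda_{k+1}$, the vector $(\xi_{k+1},\lambda_{k+1})$ constitutes a single iid pair, so $\xi_j$ (the left gap of $S_j$) and $\rho_{j+1}$ (the jump parameter at $S_j$) in the additive term come from distinct pairs and are therefore independent. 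When $\E\rho<1$, taking expectations and using stationarity of $(V_j)$ yields $\E V = (1-\E\rho + 2\E\rho\,\E\xi)/(1-\E\rho)$. From $V_0 = (1-\rho_1+2\rho_1\xi_0)+\rho_1 V_{-1}$, using independence of the pair $(\xi_1,\rho_1)$ from both $\xi_0$ and $V_{-1}$,
\begin{equation*}
\E[\xi_1 V_0] \;=\; \E\xi - \E[\rho\xi] + 2\,\E[\rho\xi]\,\E\xi + \E[\rho\xi]\,\E V,
\end{equation*}
and a short algebraic simplification collapses the right-hand side to $\E\xi\bigl(1+2\,\E[\rho\xi]/(1-\E\rho)\bigr)$.

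Substituting back yields $\E T_{S_1} = \E\xi^2 + 2\E\xi\,\E[\rho\xi]/(1-\E\rho)$, which immediately produces both formulas in \eqref{formv}. The degenerate case $v=0$ follows from the same decompositions: if $\E\xi^2 = \infty$ then already $\E T_{S_1} \ge \E[S_1(S_1-1)] = \infty$; if $\E[\rho\xi] = \infty$ then the summand $2\E[\xi_1\rho_1]\,\E\xi_0$ in $\E[\xi_1 V_0]$ blows up; and if $\E\rho\ge 1$ then $\E V = \infty$ by the standard divergence of perpetuities with non-contracting multiplicative coefficient. The main technical obstacle is the bookkeeping in the middle step: without carefully tracking which $\xi$ and $\rho$ belong to the same iid pair under the convention $\omega_{S_k}=\lambda_{k+1}$, one would obtain the incorrect constant $\E\rho\,\E\xi$ in place of $\E[\rho\xi]$ in the final formula.
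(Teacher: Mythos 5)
Your proposal is correct, but it follows a genuinely different route from the paper. The paper starts from the same identity $v=\E\xi/\E T_{S_1}$ in \eqref{def_v}, but then computes $\E T_{S_1}$ through the branching-process representation \eqref{repr}: it passes to $W_{S_n}/n$, uses the regeneration times $\tau_k$ and the law of large numbers to get $\E T_{S_1}=\E\xi+\frac{2}{\E\tau_1}\E\bar\W_{\tau_1}$ (this needs Lemma \ref{lem:nu}), identifies $\E\bar\W_{\tau_1}=\E\Y_1\,\E\tau_1$ by a Wald-type argument, and finally evaluates $\E_\omega\Y_1$ via the quenched perpetuity $\xi_1\rho_1\sum_{k\ge 2}\xi_k\prod_{i=2}^{k-1}\rho_i$. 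You instead compute $\E T_{S_1}$ directly from the classical one-step recursion $u_k=1/\omega_k+\rho^\omega_k u_{k-1}$ for quenched expected crossing times, which collapses on the marked sites to the random difference equation $V_j=\rho_{j+1}V_{j-1}+(1-\rho_{j+1}+2\rho_{j+1}\xi_j)$; the bookkeeping you emphasize (that $\rho_{j+1}$ and $\xi_j$ sit in different iid pairs, while the joint moment $\E\rho\xi$ enters only through the product $\xi_1 V_0$, since $V_0$ contains $\rho_1$ from the same pair as $\xi_1$) is exactly right and reproduces \eqref{formv}, including the correct mixed term $\E\xi\,\E\rho\xi$ rather than $\E\rho\,(\E\xi)^2$. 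What each approach buys: yours is more elementary and self-contained, bypassing \eqref{repr}, the regeneration structure and Lemma \ref{lem:nu} entirely; the paper's version recycles objects ($\tau_1$, $\bar\W_{\tau_1}$, $\Y_1$) that it must develop anyway for the stable limit theorems, so the proposition comes almost for free inside that framework. Two small points of rigour you should patch: (i) solving the fixed-point equation for $\E V$ by "stationarity" presupposes $\E V<\infty$; justify this first via the nonnegative perpetuity series for $u_{S_j}$ and Tonelli, noting that $B_{j-m}$ is independent of $\rho_{j+1}\cdots\rho_{j-m+2}$, which gives $\E V=\E B/(1-\E\rho)$ when $\E\rho<1$ and $\E V=\infty$ when $\E\rho\ge 1$ (this also makes the last degenerate case rigorous); (ii) when $\E\rho\xi=\infty$ your displayed identity for $\E[\xi_1V_0]$ contains the term $-\E[\rho\xi]$ and is formally of the type $\infty-\infty$, so argue instead by the direct lower bound $\xi_1V_0\ge \xi_1(1-\rho_1+2\rho_1\xi_0)\ge \rho_1\xi_1$ (using $\xi_0\ge 1$, $V_{-1}\ge 0$), whence $\E[\xi_1V_0]=\infty$. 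With these trivial repairs the argument is complete.
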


Turning to weak convergence results we first formulate our
assumptions on the distribution of $\rho$. Two different sets of
conditions will be used:
\begin{description}\itemsep3mm
\item[($\rho 1$)] for some $\alpha\in (0,2]$
\begin{equation*}
\E \rho^{\alpha}=1,\quad \E \rho^{\alpha}\log^{+}\rho<\infty\quad
\text{and}\quad\text{ the distribution of }\log \rho\text{ is
nonarithmetic},
\end{equation*}
where $\log^{+}x:=\max( 0,\log x)$;
\item[($\rho 2$)] there exists an open interval $\mathcal{I}\subset (0,\infty)$ such that $\E \rho^x<1$ for all $x\in \mathcal{I}$.
\end{description}
Assuming that $(\rho 1)$ holds for some $\alpha>0$ we further
distinguish two cases pertaining to the distribution of $\xi$:
\begin{description}\itemsep3mm
\item[($\xi 1$)]  $\E \xi^{2\alpha\vee 1}<\infty$, where $x\vee y:=\max(x,y)$;

\item[($\xi 2$)] there exists a slowly varying function $\ell$ such that
\begin{equation}\label{eq:reg_var_xi}
\P\{\xi>t\}\sim t^{-\beta}\ell(t),\quad t\to\infty
\end{equation}
for some $\beta\in  (1,2\alpha]$, and $\E
\xi^{2\alpha}=\infty$ if $\beta=2\alpha$.
\end{description}
Finally, if $(\rho 2)$ holds for some open interval $\mathcal{I}$
we assume that either ($\xi 1$) holds for some $\alpha\in
\mathcal{I}$ or the regular variation assumption in ($\xi 2$)
holds for some $\beta$ satisfying $\beta/2\in \mathcal{I}$.

We summarize our results in Table \ref{table1} with an emphasis on
which component of the environment dominates\footnote{In some
cases we also need additional technical assumptions concerning the
joint distribution of $\rho$ and $\xi$, for instance, $\E (\rho
\xi)^\alpha <\infty$. These will be stated explicitly in the
corresponding theorems.}.

\begin{table}[!hb]
\centering
\begin{tabular}{|p{1cm}|p{5cm}|p{8cm}|}
\hline
& ($\xi 1$) & ($\xi 2$)\\
\hline
\multirow{5}*{$(\rho 1)$} & & If $\beta<2\alpha$, see ($\rho 2$) with $\alpha=\beta/2$\\
\cline{3-3}
& & If $\beta=2\alpha,\;\lim_{t\to\infty}\ell(t)=0$, then $\rho$ dominates (Thm. \ref{thm:main1T} {\rm (A2)})\\
\cline{3-3}
& $\rho$ dominates (Thm. \ref{thm:main1T} {\rm (A1)}) & If $\beta=2\alpha,\;\lim_{t\to\infty}\ell(t)=C_{\ell}\in(0,\infty)$, then contributions of $\rho$ and
$\xi$ are comparable (Thm. \ref{thm:main1T} {\rm (A3)})\\
\cline{3-3}
&  & If $\beta=2\alpha,\;\lim_{t\to\infty}\ell(t)=+\infty$, then $\xi$ dominates (Thm. \ref{thm:main2T} {\rm (B1)})\\
\cline{3-3}
& & If $\beta>2\alpha \Longrightarrow \E \xi^{2\alpha}<\infty$, see ($\rho 1$) and ($\xi 1$)\\
\hline $(\rho 2)$ & $2\in \mathcal{I}$, contributions of $\rho$
and $\xi$ are comparable (Prop. \ref{thm:mainD}) &  $\beta\in
(1,4)$ and
$\beta/2\in \mathcal{I}$ $\Longrightarrow \xi$ dominates (Thm. \ref{thm:main2T} {\rm (B2)})\\
\hline
\end{tabular}
\caption{Influence of the environment and limit theorems for
$T_n$.} \label{table1}
\end{table}

In what follows, for $\alpha\in (0,2)$, we denote by
$\mathcal{S}_\alpha$ a random variable with an $\alpha$-stable
distribution defined by $$-\log \E\exp
(-u\mathcal{S}_\alpha)=\Gamma(1-\alpha)u^\alpha,\quad u\geq 0,$$
where $\Gamma(\cdot)$ is the gamma function, if $\alpha\in (0,1)$;
$$\log \E \exp ({\rm i}u \mathcal{S}_1)=-(\pi/2)|u|-{\rm i}u\log
|u|,\quad u\in\R;$$ $$\log \E \exp({\rm
i}u\mathcal{S}_\alpha)=|u|^\alpha
\frac{\Gamma(2-\alpha)}{\alpha-1}(\cos (\pi\alpha/2)-{\rm i}\sin
(\pi\alpha/2){\rm sign}\,u),\quad u\in\R,$$ if $\alpha\in (1,2)$.
Note that $\mathcal{S}_\alpha$ is a positive random variable when
$\alpha\in (0,1)$ and it has a spectrally positive $\alpha$-stable
distribution when $\alpha\in [1,2)$. Throughout the paper $\dod$
and $\topr$ will mean convergence in probability and convergence
in distribution, respectively.

In Theorem \ref{thm:main1T} and Corollary \ref{cor:main1X} we
treat the case $(\rho 1)$.
\begin{thm}\label{thm:main1T}
Assume that one of the following sets of assumptions is satisfied:
\begin{itemize}
\item[{\rm (A1)}] $(\rho 1)$ holds for some
 $\alpha\in (0, 2]$, $(\xi 1)$ holds and
$\E(\rho\xi)^\alpha<\infty$;
\item[{\rm (A2)}] $(\rho 1)$ holds for some
$\alpha\in (1/2,2]$ and $(\xi 2)$ holds with $\beta=2\alpha$ and
$\lim_{t\to\infty}\ell(t)=0$, and
$\E(\rho\xi)^\alpha <\infty$;
\item[{\rm (A3)}] $(\rho 1)$ holds for some
$\alpha\in (1/2,2)$, $(\xi 2)$ holds with $\beta=2\alpha$ and
$\lim_{t\to\infty}\ell(t)=C_{\ell}\in(0,\infty)$,
$\E\rho^{\alpha+\varepsilon}<\infty$ and $\E \rho^\alpha
\xi^{\alpha+\varepsilon}<\infty$ for some $\varepsilon>0$.
\end{itemize}
Then there exist absolute constants $A_\alpha$, $B_\alpha$ and
$C_1$ such that the following limit relations hold as
$n\to\infty$.
\begin{itemize}
\item If $\alpha\in (0, 1)$, then $\frac{T_n}{B_\alpha n^{1/\alpha}}~\dod~ \mathcal{S}_{\alpha}$.

\item If $\alpha=1$, then $\frac{T_n-A_1 a(n)}{B_1 n}~\dod~C_1+\mathcal{S}_1$,
where $a(n)\sim n\log n$.
\item If $\alpha\in(1,2)$, then $\frac{T_n-A_\alpha n}{B_\alpha n^{1/\alpha}}~\dod~ \mathcal{S}_{\alpha}$.

\item If $\alpha=2$, then $\frac{T_n-A_2 n}{B_2 (n\log n)^{1/2}}~\dod~ \mathcal{N}(0,1)$, where $\mathcal{N}(0,1)$ is a standard normal random variable.
\end{itemize}
\end{thm}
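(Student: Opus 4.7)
The plan is to reduce the limit theorems for $T_n$ to the limit theorems for a sum of iid random variables and then apply classical stable limit theory together with a sharp tail analysis. Following the strategy of Sections \ref{sec:bpre1}--\ref{sec:strategy}, the passage time $T_{S_n}$ admits a representation in terms of a critical Galton--Watson process in random environment with immigration, which after a suitable reduction replaces $T_{S_n}$ by a sum $\sum_{k=1}^n Y_k$ of iid random variables, each associated to the time spent by the walk in a single block $(S_{k-1},S_k]$. Since $\E\xi<\infty$, the renewal theorem gives $\kappa(n)/n\to 1/\E\xi$ $\P$-a.s.\ for $\kappa(n):=\max\{k\geq 0:S_k\leq n\}$, and the residual $T_n-T_{S_{\kappa(n)}}$ turns out to be of smaller order than each of the normalizations $n^{1/\alpha}$ and $(n\log n)^{1/2}$.

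The heart of the proof is the sharp tail estimate $\P\{Y_1>t\}\sim K t^{-\alpha}$ as $t\to\infty$, carried out in Section \ref{sec:tails}. Under (A1), the Kesten hypothesis $\E\rho^\alpha=1$ alone produces the power-law tail via the Kesten--Goldie implicit renewal theorem, the moment bound $\E(\rho\xi)^\alpha<\infty$ ensuring that the random block lengths do not alter the leading order. Under (A2), the regularly varying tail of $\xi$ would in principle contribute at the same order $t^{-\alpha}$ but, since $\ell(t)\to 0$, it is of strictly smaller order than the Kesten contribution, so $\rho$ still dominates. Under (A3), both contributions are of exact order $t^{-\alpha}$ and superpose in the leading constant $K$; the additional moment assumptions $\E\rho^{\alpha+\varepsilon}<\infty$ and $\E\rho^\alpha\xi^{\alpha+\varepsilon}<\infty$ are needed to disentangle them and to dominate the mixed cross terms.

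With the tail asymptotics in hand, the four convergence statements follow from the standard limit theorems for iid sums in the domain of attraction of an $\alpha$-stable law: for $\alpha\in(0,1)$, $Y_1$ is nonnegative with infinite mean and no centering is required; for $\alpha\in(1,2)$, the finite mean (matching $1/v$ from Proposition \ref{thm:lln}) provides the linear centering $A_\alpha n$; at the boundary $\alpha=1$ the classical truncation procedure yields the $a(n)\sim n\log n$ centering together with the deterministic constant $C_1$; for $\alpha=2$ the tail $t^{-2}$ produces a slowly varying truncated variance and hence the CLT with normalization $(n\log n)^{1/2}$. The constants $A_\alpha$, $B_\alpha$, $C_1$ are recovered from $K$ via the explicit formulas in stable limit theory. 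Transferring the limit theorem from $\sum_{k\leq n}Y_k$ to $\sum_{k\leq\kappa(n)}Y_k$ and finally to $T_n$ is routine by an Anscombe-type argument given the law of large numbers $\kappa(n)/n\to 1/\E\xi$.

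The principal obstacle is the tail analysis in case (A3): the Kesten mechanism driven by perpetuity-type products of the $\rho_i$'s, and the heavy-tail mechanism driven by a single large $\xi_k$, both produce tails of order $t^{-\alpha}$, and one must show that the leading constant splits additively as a sum of a Kesten-type constant and a Breiman-type constant proportional to $\E\rho^\alpha\cdot C_\ell$. The $\varepsilon$-moment assumptions in (A3) are precisely what is needed to dominate the intermediate regimes and cross terms between the two mechanisms. A secondary technical point is the careful bookkeeping of logarithmic factors at the boundary $\alpha=1$, where both the main sum and the centering carry $\log n$ contributions and the residual $T_n-T_{S_{\kappa(n)}}$ must be shown to be $o_{\P}(n)$ despite the tightness of individual block times only through an $L^1$-type estimate.
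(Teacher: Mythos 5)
Your overall skeleton (reduce $T_n$ to sums of iid heavy-tailed variables, split the tail into a Kesten-type and a regular-variation contribution, then apply classical stable limit theory and an inversion/Anscombe step) is the right one, but the reduction as you state it contains a genuine gap: the times (equivalently, the branching-process totals $\W_k=\sum_{j=S_{k-1}+1}^{S_k}Z_j$) attached to the blocks $(S_{k-1},S_k]$ are \emph{not} iid. Each block inherits the particles $\Z_{k-1}=Z_{S_{k-1}}$ alive at its left endpoint, so consecutive block contributions are strongly dependent, and no stable limit theorem for iid sums applies to them directly. The missing idea is the regeneration structure: one must introduce the extinction times $\tau_k$ of the embedded process at the marked generations, as in \eqref{eq:tau_def}, and work with the genuinely iid regeneration increments $\bar\W_{\tau_k}=W_{S_{\tau_k}}-W_{S_{\tau_{k-1}}}$ together with the sandwich \eqref{eq:main_two_sided_estimate}. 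This forces two further ingredients that are absent from your plan: (i) control of $\tau_1$ (finite exponential moments, Lemma \ref{lem:nu}), both to justify the renewal-type replacement $\tau_n^\ast\approx\mu^{-1}n$ and to run the tail analysis; and (ii) the tail analysis must be carried out for $\bar\W_{\tau_1}$, i.e.\ for a \emph{random number} $\tau_1$ of blocks, not for a single block, which requires the randomly-stopped-sum asymptotics (Corollary 3 of Denisov--Foss--Korshunov) and is exactly why the constant in case (A3) is $(\E\tau_1)(\E\vartheta^\alpha)C_\ell+C_2(\alpha)$ (see Proposition \ref{prop:tail_main}); your single-block picture cannot produce the factor $\E\tau_1$, and the regular-variation constant is governed by $\E\vartheta^\alpha$ coming from the critical Galton--Watson fluctuation $\Wc_{\xi-1}\approx\vartheta\xi^2$, not by a Breiman factor tied to $\rho$.

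A second, smaller point: the Kesten-type tail in (A1)--(A2) is not obtained by applying the implicit renewal theorem to the block time itself; in the paper it comes from comparing $\sum_{k\le\tau_1}(\Z_k+\W_k^\downarrow)$, on the event that the population first exceeds a high level $A$, with $\Z_\sigma\widetilde R_{\sigma+1}$, where $\widetilde R$ is the perpetuity \eqref{eq:perp} whose tail is given by Kesten--Grincevi\v{c}ius--Goldie; the moment conditions $\E(\rho\xi)^\alpha<\infty$, resp.\ $\E\rho^\alpha\xi^{\alpha+\varepsilon}<\infty$, enter through Lemmas \ref{lem:zmom}--\ref{lem:new} to kill the remaining terms and, in (A3), the cross terms in \eqref{eq:pt4}. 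Finally, in the transfer step note that for (A1) with $\alpha\le 1/2$ one cannot invoke a Marcinkiewicz--Zygmund rate for the renewal index $\nu(n)$ (only $\E\xi<\infty$ is available), so the inversion must be done with the weak law of large numbers and the sandwich $T_{S_{\nu(n)-1}}\le T_n\le T_{S_{\nu(n)}}$ rather than a rate-based residual estimate; your ``routine Anscombe argument'' needs this case distinction to be complete.
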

\begin{rem}\label{remark1}
See \eqref{alp=1}, \eqref{alp>1} and \eqref{alp<1} for explicit
forms of the constants $A_\alpha$, $B_\alpha$ and $C_1$. In Theorem
\ref{thm:main1T} we do not specify the constants by two reasons.
First, these involve characteristics of random variables that have
not been introduced so far. Second, some of these constants are
essentially implicit in the sense that these cannot be calculated.
\end{rem}

From Theorem \ref{thm:main1T} we deduce the following corollary.
\begin{cor}\label{cor:main1X}
Under the assumptions and notation of Theorem \ref{thm:main1T} the
following limit relations hold as $k\to\infty$.
\begin{itemize}
\item If  $\alpha\in (0, 1)$, then $\frac{X_k}{B^{-\alpha}_\alpha k^\alpha}~\dod~ \mathcal{S}_\alpha^{-\alpha}$.

\item If $\alpha=1$, then $\frac{X_k-A_1^{-1}\widehat{a}(k)}{A_1^{-2}B_1 k(\log k)^{-2}}~ \dod~ -C_1-\mathcal{S}_1$, where
$\widehat{a}(k)\sim k(\log k)^{-1}$.
\item If $\alpha\in(1,2)$, then $\frac{X_k-A^{-1}_{\alpha}k}{A_\alpha^{-(1+1/\alpha)}B_\alpha k^{1/\alpha}}~\dod~ -\mathcal{S}_\alpha$.
\item If $\alpha=2$, then $\frac{X_k-A^{-1}_2 k}{A_2^{-3/2}B_2 (k\log k)^{1/2}}~\dod~ \mathcal{N}(0,1)$.
\end{itemize}
\end{cor}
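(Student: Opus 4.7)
The plan is to invert the limit theorems of Theorem \ref{thm:main1T} for $T_n$ into the stated limit theorems for $X_k$ using the pathwise duality $\{T_n\leq k\}=\{M_k\geq n\}$, where $M_k:=\max_{0\leq j\leq k}X_j$. Since $X$ is $\P$-a.s.\ transient to $+\infty$, the overshoot $M_k-X_k$ is tight under $\P$ (a standard consequence of transience for nearest-neighbor walks); because each normalizing sequence in the statement ($k^\alpha$, $k^{1/\alpha}$, $(k\log k)^{1/2}$, $k(\log k)^{-2}$) tends to infinity, this difference is asymptotically negligible, so one may replace $X_k$ by $M_k$ throughout.

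In the subballistic regime $\alpha\in(0,1)$, the substitution $n_k(t):=\lfloor B_\alpha^{-\alpha}k^\alpha t^{-\alpha}\rfloor$ yields $k/(B_\alpha n_k(t)^{1/\alpha})\to t$, so Theorem \ref{thm:main1T} gives
\[
\P\{X_k\geq n_k(t)\}=\P\{T_{n_k(t)}\leq k\}\longrightarrow\P\{\mathcal{S}_\alpha\leq t\},
\]
which rearranges to $B_\alpha^{\alpha}k^{-\alpha}X_k\dod\mathcal{S}_\alpha^{-\alpha}$. In the ballistic cases $\alpha\in(1,2)$ and $\alpha=2$, write $T_n=A_\alpha n+B_\alpha c(n)\mathcal{S}^\ast+o_{\P}(c(n))$ with $c(n)=n^{1/\alpha}$ (respectively $(n\log n)^{1/2}$) and $\mathcal{S}^\ast=\mathcal{S}_\alpha$ (respectively $\mathcal{N}(0,1)$). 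Setting $T_n\approx k$ and solving to first order around $n_0:=A_\alpha^{-1}k$, and using the regular variation of $c$ so that $c(n_0+O(c(n_0)))\sim c(n_0)$, one obtains
\[
X_k-A_\alpha^{-1}k\approx -A_\alpha^{-1}B_\alpha c(n_0)\mathcal{S}^\ast=-A_\alpha^{-(1+1/\alpha)}B_\alpha k^{1/\alpha}\mathcal{S}^\ast,
\]
with the obvious $(k\log k)^{1/2}$-scaling when $\alpha=2$; the minus sign is precisely what inversion produces (and is invisible when the limit is symmetric). For $\alpha=1$ the same argument uses $T_n=A_1 a(n)+B_1 n(C_1+\mathcal{S}_1)+o_{\P}(n)$ with $a(n)\sim n\log n$: since $\widehat a$ is the asymptotic inverse of $a$ and $a'(\widehat a(k))\sim\log k$, the perturbation of $n$ needed to solve $T_n=k$ is of order $A_1^{-2}B_1 k(\log k)^{-2}(C_1+\mathcal{S}_1)$, matching the statement.

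The main technical obstacle is justifying the linearization rigorously in the $\alpha\geq 1$ cases: one must show that the weak limit $\mathcal{S}^\ast$ obtained along the shifted index $n=A_\alpha^{-1}k+O(c(k))$ is the same as the one obtained at $n=A_\alpha^{-1}k$. This is standard but not automatic; it follows from the self-similarity of $\mathcal{S}_\alpha$ combined with the regular variation of $c(n)$ and a tightness argument applied to $T_n$ along the relevant skeleton sequence. Once this step is in place, each of the four assertions of the corollary is an arithmetic rearrangement of the corresponding case of Theorem \ref{thm:main1T}.
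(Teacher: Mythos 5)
Your plan is correct in substance, but it takes a different (more self-contained) route than the paper: the paper's entire proof of this corollary is a one-line citation, observing that the limit laws for $T_n$ in Theorem \ref{thm:main1T} have exactly the same form as in the Kesten--Kozlov--Spitzer theorem (pp.~146--148 of \cite{kesten1975limit}), so the statements for $X_k$ can simply be copied from there; what you propose is essentially to re-derive by hand the inversion argument that underlies that cited theorem. Your arithmetic checks out in all four cases: the substitution $n_k(t)=\lfloor B_\alpha^{-\alpha}k^\alpha t^{-\alpha}\rfloor$ for $\alpha\in(0,1)$, the linearization around $n_0=A_\alpha^{-1}k$ giving the factor $A_\alpha^{-(1+1/\alpha)}$ (resp.\ $A_2^{-3/2}$) and the sign flip for $\alpha\in(1,2]$, and the $\alpha=1$ computation with $a'(\widehat a(k))\sim\log k$ producing the $A_1^{-2}B_1k(\log k)^{-2}$ scaling all match the corollary. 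The two points you flag as ``standard'' are precisely where the real work of the KKS proof lies and would need to be spelled out if you insist on a self-contained argument: (i) tightness of $M_k-X_k$ under the annealed law is not automatic from quenched transience alone and requires a uniform (in the level) bound on backtracking probabilities, which in the sparse setting needs a small extra argument because the environment is not stationary (the marked points form a non-stationary renewal set), though it does follow from $\E\rho^{s}<1$ for small $s$ together with the block structure; (ii) the replacement of the centering/normalization at the shifted index $n=A_\alpha^{-1}k+O(c(k))$ by its value at $A_\alpha^{-1}k$, especially in the delicate case $\alpha=1$, is exactly the step handled in \cite{kesten1975limit} (and, for the companion Theorem \ref{thm:main2T}, via \cite{Anderson+Athreya:1988}). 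So your proposal buys independence from the citation at the cost of having to reprove these two lemmas; the paper buys brevity by noting that only the constants differ from the classical statement.
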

\begin{rem}
When $\alpha\in (0,1)$ the distribution of
$\mathcal{S}_\alpha^{-\alpha}$ is called the Mittag-Leffler
distribution with parameter $\alpha$. The term stems from the
facts that
$$\E\exp(u\Gamma(1-\alpha)\mathcal{S}_\alpha^{-\alpha})=\sum_{n\geq 0}\frac{u^n}{\Gamma(1+n\alpha)},\quad
u\in\R$$ and that the right-hand side defines the Mittag-Leffler
function with parameter $\alpha$.
\end{rem}

Our next theorem treats weak convergence of $T_n$ in cases where
$\xi$ plays a dominant role.
\begin{thm}\label{thm:main2T}
Assume that one of the following sets of assumptions is satisfied:
\begin{itemize}
\item[{\rm (B1)}] $(\rho 1)$ holds for some $\alpha\in (1/2,2]$, $(\xi 2)$ holds with $\beta=2\alpha$ and $\lim_{t\to\infty}\ell(t)=+\infty$, and $\E(\rho\xi)^\alpha<\infty$;
\item[{\rm (B2)}] $(\rho 2)$ holds and $(\xi 2)$ holds with $\beta\in(1,4)$ such that $\beta/2\in\mathcal{I}$ and $\E(\rho\xi)^{\beta/2+\varepsilon}<\infty$ for some $\varepsilon>0$.
\end{itemize}
In the case ${\rm (B2)}$ put $\alpha:=\beta/2$. Then there exist the functions $c_\alpha(t)$ for $\alpha\in (1/2, 2)$,
$q_1(t)$ and $r_2(t)$ regularly varying at $\infty$ of indices $1/\alpha$, $1$ and $1/2$, respectively, and the absolute constants $A_\alpha^\ast$
and $B_\alpha^\ast$ for $\alpha\in (1/2, 2]$ such that the following limit relations hold as $n\to\infty$.
\begin{itemize}
\item If $\alpha\in(1/2,1)$, then $\frac{T_n}{B^\ast_\alpha c_\alpha(n)}~\dod~ \mathcal{S}_\alpha$.
\item If $\alpha=1$, then $\frac{T_n-n-q_1(A^\ast_1 n)}{B^\ast_1 c_1(n)}~\dod~ \mathcal{S}_{1}$.

\item If $\alpha\in(1,2)$, then $\frac{T_n-A^\ast_\alpha n}{B^\ast_\alpha c_\alpha(n)}~\dod~ \mathcal{S}_\alpha$.
\item If $\alpha=2$, then $\frac{T_n-A^\ast_2 n}{B^\ast_2 r_2(n)}~\dod~ \mathcal{N}(0,1)$.
\end{itemize}
\end{thm}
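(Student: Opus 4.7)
The starting point, announced in Section \ref{sec:strategy}, is the decomposition of $T_{S_n}$ into iid ingredients. Let $\tau_k$ be the time the walk needs between the first arrival at $S_{k-1}$ and the first arrival at $S_k$. By the strong Markov property and the iid structure of the environment, $(\tau_k)_{k\geq 1}$ are iid, and $T_{S_n}=\sum_{k=1}^n\tau_k$. By the representation via the critical Galton--Watson process with immigration from Section \ref{sec:bpre1}, each $\tau_k$ splits into a \emph{segment-crossing} piece (how long a simple symmetric random walk takes to traverse an interval of length $\xi_k$, with biased reflection at $S_{k-1}$ and $S_k$) and a \emph{perpetuity} piece accounting for the accumulated backward drift produced by all earlier sparse points.

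The crux is to identify the tail of $\tau_1$ under (B1) and (B2). Because a simple symmetric random walk needs $\Theta(n^2)$ steps to cross a segment of length $n$, a tail $\P\{\xi>t\}\sim t^{-\beta}\ell(t)$ for $\xi$ translates, via $t\mapsto t^2$, into a tail of index $\beta/2=\alpha$ for the segment-crossing piece; one therefore expects $\P\{\tau_1>t\}\sim c\, t^{-\alpha}\widetilde\ell(t)$ for an explicit constant $c$ and a slowly varying $\widetilde\ell$ obtained from $\ell$. I would prove this in three steps: (i) bound the segment-crossing piece of $\tau_1$ from above and below by exit times of a symmetric walk from an interval of length $\xi_1$, using Section \ref{sec:tails}; (ii) show that the perpetuity piece has a strictly lighter tail, which is exactly what the moment assumptions $\E(\rho\xi)^\alpha<\infty$ in (B1) and $\E(\rho\xi)^{\beta/2+\varepsilon}<\infty$ in (B2) are engineered to provide, together with $(\rho 1)$ (resp.\ $(\rho 2)$) controlling the growth of products $\rho_1\cdots\rho_j$; and (iii) combine to conclude regular variation of $\P\{\tau_1>t\}$ with index $\alpha$.

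Once the tail of $\tau_1$ is in hand, the limit for $T_{S_n}=\sum_{k=1}^n\tau_k$ is routine: the classical stable limit theorem for iid summands with regularly varying tails gives
$$\frac{T_{S_n}-n\mu_n}{c_\alpha(n)}\dod \mathcal{S}_\alpha\qquad (\alpha\in(1/2,2)),$$
where $c_\alpha$ is the standard regularly varying normalizer of index $1/\alpha$ and $\mu_n$ is the appropriate centering: $0$ for $\alpha<1$, a truncated mean yielding the $q_1$-correction for $\alpha=1$, and $\E\tau_1$ for $\alpha\in(1,2)$. The $\alpha=2$ regime, where $\E\tau_1^2=\infty$ but slowly varying truncated variance makes Gaussian convergence with normalizer $r_2(n)$ of index $1/2$ the correct statement, is also standard once the truncated variance of $\tau_1$ is computed from its tail.

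The final step is to transfer convergence from $T_{S_n}$ to $T_n$. Since $\E\xi<\infty$, the strong law gives $S_n/n\to\E\xi$ a.s., so that $T_n$ differs from $T_{S_{\lfloor n/\E\xi\rfloor}}$ only by the $\tau_k$ corresponding to the (unique) segment containing $n$; a direct estimate shows this overshoot is negligible compared to $c_\alpha(n)$, and the factor $\E\xi$ is absorbed into the constants $A_\alpha^\ast$ and $B_\alpha^\ast$. The main technical obstacle, as I see it, lies squarely in step (ii) of the tail analysis: under (B2) only a fragment of an analyticity strip for $\E\rho^x$ is available, so one cannot invoke $(\rho 1)$ to control large products $\rho_1\cdots\rho_j$ directly and has to rely instead on Chernoff-type bounds at $x=\beta/2$ and the joint moment hypothesis $\E(\rho\xi)^{\beta/2+\varepsilon}<\infty$. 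Handling $\alpha=1$ cleanly (to match the centering $n+q_1(A_1^\ast n)$) will also require some bookkeeping, as truncated means of heavy-tailed summands interact with the $S_n$-to-$n$ transfer.
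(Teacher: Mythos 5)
There is a genuine gap at the very first step. You declare that the block crossing times $\tau_k:=T_{S_k}-T_{S_{k-1}}$ are iid under the annealed law $\P$ and then invoke ``the classical stable limit theorem for iid summands''. Under the quenched law $\P_\omega$ these crossing times are indeed independent (strong Markov property), but under $\P$ they are not: each $\tau_k$ depends, through the backtracking of the walk below $S_{k-1}$ --- exactly the ``perpetuity piece accounting for the accumulated backward drift produced by all earlier sparse points'' that you yourself describe --- on the whole environment $(\xi_j,\lambda_j)_{j\le k}$, so consecutive crossing times share environment coordinates and are correlated (e.g.\ ${\rm Cov}(\tau_1,\tau_2)\ge {\rm Cov}(\E_\omega\tau_1,\E_\omega\tau_2)>0$ in general). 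Your proposal is thus internally inconsistent: the $\tau_k$ cannot simultaneously contain a perpetuity built from all earlier sparse points and be independent. Without the iid structure the stable limit theorem for iid summands does not apply, and this is precisely the obstacle the paper's argument is designed to remove.

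The paper's route is different in exactly this respect: via the Kesten--Kozlov--Spitzer correspondence one writes $T_{S_n}\od S_n+2\sum_{k=1}^{S_n}Z_k+O_\P(1)$ with $Z$ the branching process with immigration, and then regenerates at the extinction times $\tau_k$ of \eqref{eq:tau_def}, i.e.\ the successive indices $j$ with $Z_{S_j}=0$. The blocks $\bar\W_{\tau_k}=W_{S_{\tau_k}}-W_{S_{\tau_{k-1}}}$ \emph{are} iid, $W_{S_n}$ is sandwiched between random-index sums of them as in \eqref{eq:main_two_sided_estimate}, and the tail of $\bar\W_{\tau_1}$ is identified in Proposition \ref{prop:tail_main}, parts {\rm (C3)} and {\rm (C4)}: $\P\{\bar\W_{\tau_1}>x\}\sim(\E\tau_1)(\E\vartheta^\alpha)x^{-\alpha}\ell(x^{1/2})$, with the quadratic time-to-cross heuristic you describe made precise by Proposition \ref{prop:GW}, and the $\rho$-driven contribution shown to be of smaller order via Lemmas \ref{lem:Wnu} and \ref{prop:main2} (this is where $\E(\rho\xi)^\alpha<\infty$, resp.\ $\E(\rho\xi)^{\beta/2+\varepsilon}<\infty$ and the open interval in $(\rho 2)$, enter --- so your intuition about steps (i)--(iii) is in the right spirit, but it must be run for $\bar\W_{\tau_1}$, not for a single crossing time). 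Only after that does one apply the iid stable limit theorem, transfer to $W_{S_n}$ through the sandwich and a renewal CLT for $\tau_n^\ast$, handle the $S_n$-fluctuations (for $\alpha=2$ under {\rm (B1)} one needs $\sqrt n=o(r_2(n))$, which uses $\E\xi^2=\infty$), and finally pass from $T_{S_n}$ to $T_n$ via $T_{S_{\nu(n)-1}}\le T_n\le T_{S_{\nu(n)}}$ with the centering adjustments of \eqref{ineq3}, which is more delicate at $\alpha=1$ than a simple ``overshoot'' estimate. Repairing your write-up essentially requires importing this regeneration structure; as stated, the core decomposition does not hold.
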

\begin{rem}
This is a counterpart of Remark \ref{remark1}. Explicit forms of
the normalizing and centering sequences in Theorem
\ref{thm:main2T} and Corollary \ref{cor:main2X} given below can be
found in \eqref{IIalp<1}, \eqref{IIalp=1}, \eqref{IIalp>1} and
\eqref{IIalp=2}, and \eqref{IIIalp<1}, \eqref{IIIalp=1},
\eqref{IIIalp>1} and \eqref{IIIalp=2}, respectively.
\end{rem}

Before formulating the corresponding limit theorems for $X_k$ we
need to introduce more notation. For $\alpha\in (1/2, 1)$, denote
by $c_\alpha^\leftarrow(t)$ any positive function satisfying
$c_\alpha(c_\alpha^\leftarrow(t))\sim c_\alpha^\leftarrow
(c_\alpha(t)) \sim t$ as $t\to\infty$. Since $c_\alpha(t)$ is
regularly varying at $\infty$ such $c_\alpha^\leftarrow (t)$ do
exist by Theorem 1.5.12 in \cite{bingham1989regular}.
\begin{cor}\label{cor:main2X}
Under the assumptions and notation of Theorem \ref{thm:main2T} the following limit relations hold as $k\to\infty$.
\begin{itemize}
\item If $\alpha\in(1/2,1)$, then $\frac{X_k}{(B^\ast_\alpha)^{-\alpha}c_\alpha^\leftarrow(k)}~\dod~ \mathcal{S}_\alpha^{-\alpha}$.
\item If $\alpha=1$, then $\frac{X_k-s(k)}{t(k)}~\dod~
-\mathcal{S}_1$ for appropriate sequences $s(k)$ and $t(k)$ which
are specified in formula \eqref{IIIalp=1}.
\item If $\alpha\in(1,2)$, then $\frac{X_k-(A^\ast_\alpha)^{-1}k}{(A^\ast_\alpha)^{-(1+1/\alpha)}B^\ast_\alpha c_\alpha(k)}~\dod~ -\mathcal{S}_\alpha$.
\item If $\alpha=2$, then $\frac{X_k-(A^\ast_2)^{-1}k}{(A^\ast_2)^{-3/2}B^\ast_2
r_2(k)}~\dod~ \mathcal{N}(0,1)$.
\end{itemize}
\end{cor}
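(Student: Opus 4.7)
The plan is to deduce Corollary \ref{cor:main2X} from Theorem \ref{thm:main2T} by the standard inversion of the first-passage functional. The starting point is the $\P$-a.s.\ sandwich
\begin{equation*}
\{X_k \geq n\} \subseteq \{T_n \leq k\} \subseteq \big\{\max_{0\leq j\leq k} X_j \geq n\big\},
\end{equation*}
together with the fact that, since $X$ is transient to $+\infty$, the running maximum $\max_{0\leq j\leq k} X_j$ differs from $X_k$ by a quantity that is negligible with respect to every normalising sequence appearing in Theorem \ref{thm:main2T}; this last point I would verify using the oscillation estimates prepared in Section \ref{sec:strategy}. Combined with the continuity of all the limit distributions, this reduces the corollary to translating the quantitative convergence of $T_n$ into a statement about $X_k$.

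For $\alpha\in(1/2,1)$, the natural choice is to fix $y>0$ and set $n:=\lfloor y(B^\ast_\alpha)^{-\alpha} c_\alpha^\leftarrow(k)\rfloor$. Regular variation of $c_\alpha$ with index $1/\alpha$ gives $c_\alpha(n)\sim y^{1/\alpha}(B^\ast_\alpha)^{-1} k$, so the first part of Theorem \ref{thm:main2T} will yield
\begin{equation*}
\P\{X_k \geq n\}=\P\{T_n \leq k\}\longrightarrow \P\{\mathcal{S}_\alpha \leq y^{-1/\alpha}\}=\P\{\mathcal{S}_\alpha^{-\alpha}\geq y\}.
\end{equation*}
For $\alpha\in(1,2)$, I would instead take $n:=\lfloor (A^\ast_\alpha)^{-1}k + (A^\ast_\alpha)^{-(1+1/\alpha)} B^\ast_\alpha c_\alpha(k)\,y\rfloor$, which produces the precise cancellations $k - A^\ast_\alpha n = -(A^\ast_\alpha)^{-1/\alpha} B^\ast_\alpha c_\alpha(k)\,y + O(1)$ and $B^\ast_\alpha c_\alpha(n) \sim B^\ast_\alpha(A^\ast_\alpha)^{-1/\alpha} c_\alpha(k)$; substituting into the third part of Theorem \ref{thm:main2T} then forces $\P\{X_k\leq n\}=\P\{T_n>k\}\to\P\{\mathcal{S}_\alpha > -y\}=\P\{-\mathcal{S}_\alpha\leq y\}$. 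The case $\alpha=2$ is handled identically with $r_2$ in place of $c_\alpha$, the sign reversal being invisible because $\mathcal{N}(0,1)\od -\mathcal{N}(0,1)$.

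The main obstacle is the critical case $\alpha=1$, whose centering in Theorem \ref{thm:main2T} contains the extra term $q_1(A^\ast_1 n)$ which is itself regularly varying of index $1$. Inverting the near-linear relation $m + q_1(A^\ast_1 m)\approx k$ to the precision of the fluctuation scale $c_1(k)$ is not completely routine: writing $q_1(t)=t\tilde\ell(t)$ with $\tilde\ell$ slowly varying, I would iterate the naive guess $m\sim k$ once to get $m = k - q_1(A^\ast_1 k) + o(c_1(k))$, using the representation theorem and Theorem 1.5.12 of \cite{bingham1989regular} to control the slowly varying remainder. Matching the refined inverse with the explicit normalisations in \eqref{IIIalp=1} will then identify the sequences $s(k)$ and $t(k)$, and the sign reversal yielding $-\mathcal{S}_1$ arises, as in the supercritical cases, from passing from $\{T_n\leq k\}$ to the complementary event $\{X_k<n\}$.
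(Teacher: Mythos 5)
Your overall route coincides with the paper's: the corollary is obtained from Theorem \ref{thm:main2T} by the standard inversion of first-passage times (the paper simply invokes the technique of Theorem 7 in \cite{feller1949}, handling $\alpha=1$ via Section 3 of \cite{Anderson+Athreya:1988}), and your explicit computations for $\alpha\in(1/2,1)$, $\alpha\in(1,2)$ and $\alpha=2$ are correct and just spell out what the paper leaves implicit. One small inaccuracy: Section \ref{sec:strategy} contains no oscillation estimates for the walk itself; the tightness of $\max_{0\le j\le k}X_j-X_k$, needed to replace the running maximum by $X_k$, has to be taken from transience/KKS-type arguments (it is standard, so this is not a substantive gap).

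The genuine problem is your treatment of $\alpha=1$, which is precisely the case the paper flags as delicate. Your inversion $m=k-q_1(A^\ast_1k)+o(c_1(k))$, obtained by one fixed-point iteration started from the guess $m\sim k$, is false, because $q_1$ is regularly varying of index $1$ and its correction is \emph{not} of smaller order than $m$: by \eqref{IIalp=1}, $q_1(A^\ast_1 n)=2a(bn)=2bn\,m(c_1(bn))$ with $b=A^\ast_1=(\mu\E\xi)^{-1}$ and $m(t)=\int_0^t\P\{\bar\W_{\tau_1}>x\}\,{\rm d}x$, so $q_1(A^\ast_1 n)/n\to 2b\,\E\bar\W_{\tau_1}\in(0,\infty]$. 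Hence the relation $m+q_1(A^\ast_1m)\approx k$ has solution $m\sim k/(1+2b\,m(c_1(bm)))$, which equals asymptotically $k/(1+2b\,\E\bar\W_{\tau_1})$ when $\E\bar\W_{\tau_1}<\infty$ and is $o(k)$ when $\E\bar\W_{\tau_1}=\infty$; your candidate $k-q_1(A^\ast_1k)\sim k(1-2b\,m(c_1(bk)))$ differs from this by a term of exact order $k$ (and is eventually negative in the infinite-mean case, which the paper explicitly does not exclude). Such an order-$k$ discrepancy is not absorbed by the fluctuation scale $t(k)=c_1(k/m(k))/(1+2bm(k))$, which is $o(k)$ whenever $\E\bar\W_{\tau_1}<\infty$, so the centering you would extract does not match $s(k)$ in \eqref{IIIalp=1} and the claimed convergence would fail. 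The correct inversion requires the self-consistent (two-stage) form $s(k)=k/(1+2bm(c_1(bk/(1+2bm(bk)))))$, i.e.\ the conjugate-variation argument of \cite{Anderson+Athreya:1988}, not a subtracted one-step correction; this part of your proposal needs to be redone.
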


The last result of this section is given for completeness only. It
can be derived from a general central limit theorem (Theorem 2.2.1
in \cite{zeitouni2004}) for random walk in a stationary and
ergodic random environment. Since the sparse random environment is
not stationary in general, to apply this theorem one has to pass
to a stationary and ergodic environment. In Theorem 2.1 in
\cite{matzavinos2016random} it is shown that such a passage is
possible whenever $\E\xi<\infty$.
\begin{prop}\label{thm:mainD}
Assume that $(\rho 2)$ and $(\xi 1)$ hold for some $\alpha\geq 2$.
Then there exists $\sigma_0\in (0,\infty)$ such that, as
$n\to\infty$,
$$\frac{T_n-v^{-1}n}{\sigma_0 n^{1/2}}~\dod~\mathcal{N}(0,1)$$ and
$$\frac{X_n-vn}{\sigma_0 v^{3/2} n^{1/2}}~\dod~\mathcal{N}(0,1),$$
where $v$ is given in \eqref{formv}.
\end{prop}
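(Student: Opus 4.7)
The plan is to follow the route sketched in the paragraph preceding the statement: stationarise the sparse environment, apply a classical annealed CLT for random walks in stationary ergodic environments, and then invert from $T_n$ to $X_n$. For the first step I would invoke Theorem~2.1 of \cite{matzavinos2016random} to construct a stationary and ergodic extension $\omega^{*}$ of $\omega$, which amounts to size-biasing the spacing straddling the origin and shifting uniformly within it (equivalently, passing to the Palm distribution of $\{S_k\}_{k\in\mz}$). Since $\E\xi<\infty$, this only costs an a.s.\ finite random shift of the origin, so the large-$n$ behaviour of $T_n$ and $X_n$ under $\P$ coincides with that under the stationary law.

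For the second step I apply Theorem~2.2.1 of \cite{zeitouni2004} to the stationarised environment. Its hypotheses in our context reduce to $\E \log \rho < 0$, which follows from $(\rho 2)$ via convexity of $x\mapsto \E \rho^x$ (this function equals $1$ at $x=0$ and is strictly less than $1$ on the open interval $\mathcal{I}$), together with a second-moment bound on the hitting time $T_1$ under the stationary law. The latter is obtained from the classical Kesten--Kozlov--Spitzer formulae for $\E T_1$ and $\E T_1^2$: finiteness of the second moment follows from $\E \rho^{2+\eps}<1$ for some $\eps>0$ (available from $(\rho 2)$ because $2 \in \mathcal{I}$ and $\mathcal{I}$ is open) combined with $\E\xi^{4}<\infty$ (which is $(\xi 1)$ specialised to $\alpha\geq 2$). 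The theorem then delivers
$$\frac{T_n-v^{-1}n}{\sigma_0\, n^{1/2}} \dod \mathcal{N}(0,1)$$
for some $\sigma_0\in[0,\infty)$, with centering constant $v^{-1}n$ matching \eqref{def_v} and $v$ as in Proposition~\ref{thm:lln}. The companion CLT for $X_n$ follows by the standard passage from a CLT for hitting times to a CLT for the position: writing $\{X_n \ge m\}$ in terms of $\{T_m \leq n\}$, using $T_n/n \to 1/v$ a.s.\ and an Anscombe-type argument, one obtains
$$\frac{X_n-vn}{\sigma_0 v^{3/2}\, n^{1/2}} \dod \mathcal{N}(0,1).$$

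The main obstacle I anticipate is the nondegeneracy $\sigma_0>0$, since Zeitouni's theorem only delivers a nonnegative variance. The cleanest way to exclude $\sigma_0=0$ here is to exploit the sparse structure directly: under $\P$ the increments $(T_{S_k}-T_{S_{k-1}})_{k\geq 1}$ are iid, so $\V(T_{S_n})=n\V(T_{S_1})$, and $\V(T_{S_1})>0$ as long as the excursion inside one block is not deterministic, which is guaranteed by $\E\log \rho<0$ ruling out $\rho\equiv 1$. A secondary point to check is that the specific moment conditions of Theorem~2.2.1 are matched to the sparse setup; this is a routine but careful calculation because most sites contribute a symmetric (rather than biased) step and the biased sites $\{S_k\}$ are themselves random.
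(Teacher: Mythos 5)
Your route is exactly the one the paper takes: Proposition \ref{thm:mainD} is stated ``for completeness only'' and the paper's entire proof consists of the observation that one passes to a stationary ergodic environment via Theorem 2.1 of \cite{matzavinos2016random} (possible since $\E\xi<\infty$) and then applies the annealed CLT of Theorem 2.2.1 in \cite{zeitouni2004}, which is precisely your plan. Your additional details (deducing $\E\log\rho<0$ from $(\rho 2)$ by convexity, the moment checks, and the iid block decomposition $(T_{S_k}-T_{S_{k-1}})_{k\ge 1}$ to rule out a degenerate limit) go beyond what the paper writes down and are consistent with it.
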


\section{Branching processes in random environment with immigration}
\label{sec:bpre}

The connection between a random walk and a branching process with
immigration dates back to Harris~\cite{harris:1952}. In the
context of a random walk in a random environment this connection
was successfully used by Kozlov~\cite{kozlov:1974} and Kesten,
Kozlov and Spitzer~\cite{kesten1975limit}. In particular, these
authors have shown that the asymptotic behavior of RWRE can be
obtained from that of the total progeny of the aforementioned
branching process. Since we are going to exploit the same idea we
first recall a construction of the latter process. Most of the
material in Section \ref{sec:bpre1} can be found in
\cite{kesten1975limit}.

\subsection{Branching process with immigration}\label{sec:bpre1}

Throughout the paper the fact that $X_n \to \infty$ $\P$-a.s.\
plays a crucial role. Let $U_i^{(n)}$ be the number of steps of
the process $X$ from $i$ to $i-1$ during the time interval
$[0,T_n)$, that is, $$U_i^{(n)} = \# \big\{ k < T_n: X_k=i,
X_{k+1} = i-1 \big\}, \qquad i\le n.$$ Since $X_{T_n}=n$ and
$X_0=0$ we have, for $n\in \N$,
\begin{eqnarray*}
T_n &=& \# \mbox{ of steps during $[0,T_n)$}\\
&=& \# \mbox{ of steps to the right during $[0,T_n)$}+ \# \mbox{ of steps to the left during $[0,T_n)$}\\
&=& n + 2 \cdot \# \mbox{ of steps to the left during $[0,T_n)$}\\
&=& n + 2\sum_{i=-\infty}^n U_i^{(n)}.
\end{eqnarray*}
Recalling that the random walk $X$ is transient to the right we
infer
\begin{equation}\label{eq:negative_half_line_negligible}
\sum_{i < 0}U_i^{(n)}\leq \text{ total time spent by } X
\text{ in }(-\infty,0) < \infty\quad \text{a.s.}
\end{equation}
In particular, for any $\gamma>0$,
\begin{equation*}
n^{-\gamma}\sum_{i<0}U_i^{(n)}~\topr~ 0,\quad n\to\infty.
\end{equation*}
Thus, the asymptotics of $T_n$ as $n\to\infty$ is regulated by
that of $n+2\sum_{i=0}^n U_i^{(n)}$.

In what follows, we write ${\rm Geom}(p)$ for a geometric
distribution with success probability $p$, that is, $${\rm
Geom}(p)\{\ell\}=p(1-p)^\ell,\quad \ell\in\N_0.$$

\noindent {\sc Claim}. Let $\omega$ and $n$ be fixed. Then, for
$0\leq j\leq n$, $U^{(n)}_{n-j}$ is equal to the size of the $j$th
generation (excluding the immigrant) of an inhomogeneous branching
process with one immigrant in each generation. Under $\P_\omega$,
the offspring distribution of the immigrant and the other
particles in the $(j-1)$st generation is ${\rm
Geom}(\omega_{n-j})$.

\noindent {\sc Proof of the claim}. First note that $U_n^{(n)} =0$
because $X$ cannot reach $n$ before time $T_n$. Further,
$U_{n-1}^{(n)}= V^{(n-1)}_0$, where $V_0^{(n-1)}$ is the number of
excursions to the left of $n-1$ made by $X$ before time $T_n$.
Transitivity of $X$ entails that the $\P_\omega$-distribution of
$V_0^{(n-1)}$ is ${\rm Geom}(\omega_{n-1})$. Finally, for $2\leq j
\leq n-1$, we have
$$U^{(n)}_{n-j}= \sum_{k=1}^{U_{n-j+1}^{(n)}} V^{(n-j)}_k + V^{(n-j)}_0\quad \text{a.s.},$$
where $V^{(n-j)}_0$ denotes the number of excursions to the left
from $n-j$ before the first excursion to the left from $n-j+1$
(that is, before the time $T_{n-j+1}$) and $V^{(n-j)}_k$ denotes
the number of excursions to the left from $n-j$ during the $k$th
excursion to the left from $n-j+1$. Under $\P_\omega$, the random
variables $(V^{(n-j)}_k)_{k\geq 0}$ are iid with distribution
${\rm Geom}(\omega_{n-j})$ and also independent of
$U^{(n)}_{n-j+1}$. The proof of the claim is complete.

Reversing the order of indices leads to a branching process
$Z=(Z_k)_{k\geq 0}$ in a random environment (BPRE) with one
immigrant entering the system in each generation. From the very
beginning we stress that immigrants in our model are `artificial',
that is, even though they reproduce, they do not belong to any
generation and, as such, they are not counted. The evolution of
$Z$ can be described as follows. An immigrant enters the $0$th
generation which is originally empty, that is, $Z_0=0$. She gives
birth to a random number of offspring with
$\P_\omega$-distribution ${\rm Geom}(\omega_1)$ which form the
first generation. For $n\in\N$, an immigrant enters the $n$th
generation. She and the particles of the $n$th generation,
independently of each other and the particles in the previous
generations, give birth to random numbers of offspring with
$\P_\omega$ distribution ${\rm Geom}(\omega_{n+1})$. The number of
these newborn particles which form the $(n+1)$st generation is
given by
$$Z_{n+1}= \sum_{k=0}^{Z_n}G_k^{(n)},\quad n\in\N_0,$$ where $G_0^{(n)}$ is the number of offspring of the $(n+1)$st immigrant and, for $k\in\N$,
$G_k^{(n)}$ is the number of offspring of the $k$th particle in the $n$th generation (we set $G_k^{(n)}=0$ if the $k$th particle
in the $n$th generation does not exist). Observe that, under
$\P_\omega$, for each $n\in \N_0$, the random variables $(G_k^{(n)})_{k\ge 0}$ are iid
with distribution ${\rm Geom}(\omega_n)$ and also independent of
$Z_n$.

Note that when the random environment is sparse
(see~\eqref{eq:sparse}) and fixed, for the most time, the
branching process $Z$ behaves like a critical Galton--Watson
process with one immigrant and ${\rm Geom}(1/2)$ offspring
distribution. Only the particles of generation $S_i-1$ for
$i\in\N$ as well as the immigrants arriving in this generation
reproduce according to ${\rm Geom}(\l_i)$ distribution. Averaging
over $\omega$ and taking into account the structure of the environment
we obtain
\begin{equation}\label{eq:connection_with_bpre}
\sum_{j=0}^{S_n}
U_j^{(S_n)}~\od~\sum_{k=1}^{S_n}Z_k\quad\text{and}\quad
S_n+\sum_{j=0}^{S_n} U_j^{(S_n)}~\od~S_n+\sum_{k=1}^{S_n}Z_k,\quad
n\in\N
\end{equation}
under the annealed probability $\P$. This leads to the most
important conclusion of the present section
\begin{equation}\label{repr}
T_{S_n}~\od~ S_n+2\sum_{k=1}^{S_n} Z_k+O_\P(1), \quad n\in\N,
\end{equation}
where $O_\P(1)$ is a term which is bounded in probability.
Distributional equality \eqref{repr} will prove useful on many
occasions.

\subsection{Notation}\label{subsec:notation}

Before we explain the strategy of our proof some more notation
have to be introduced. Denote by $Z(k,n)$ the number of progeny
residing in the $n$th generation of the $k$th immigrant. In
particular, $Z(k,k)$ is the number of offspring of this immigrant.
Then $$Z_n = \sum_{k=1}^n Z(k,n).$$ For $n\in\N$ and $1\leq i\leq
n$, let $Y(i,n)$ denote the number of progeny in the generations
$i,i+1,\ldots, n$ of the $i$th immigrant, that is,
$$Y(i,n) = \sum_{k=i}^n Z(i,k).$$ Similarly, for $i\in\N$, we denote by $Y_i$ the total progeny of the $i$th
immigrant, that is,
$$Y_i=Y(i,\infty)=\sum_{k\geq i} Z(i,k).$$ We also define $W_n$ to be the total population size in the first $n$ generations, that
is, $$W_n=\sum_{j=1}^n Z_j,\quad n\in\mathbb{N}.$$ Motivated by
the structure of the environment we shall often divide the
population into blocks which include generations $1,\ldots, S_1$;
$S_1+1,\ldots, S_2$ and so on. As a preparation, we write $$\Z_n =
Z_{S_n},\quad n\in\N$$ for the number of particles in the
generation $S_n$, $$\W_n = W_{S_n} - W_{S_{n-1}}=
\sum_{j=S_{n-1}+1}^{S_n}Z_j,\quad n\in\N$$ for the total
population in the generations $S_{n-1}+1,\ldots, S_n$ and
$$\Y_n =\sum_{j=S_{n-1}+1}^{S_n} Y_j,\quad n\in\N$$ for the total
progeny of immigrants arriving in the generations $S_{n-1},\ldots,
S_n-1$.

\subsection{Analysis of the environment}\label{sec:rde}

The asymptotic behavior of the branching process $Z$ depends
heavily upon the environment. At the end of this section we
specify qualitatively two aspects of this dependence. A random
difference equation which arises naturally in the course of our
discussion, as well as in \cite{kesten1975limit} and many other
papers on RWRE, plays an important role in the subsequent
arguments.

We proceed by recalling the definitions of random difference
equations and perpetuities. Let $(A_n, B_n)_{n\in\N}$ be a
sequence of independent copies of an $\R^2$-valued random vector
$(A, B)$. Further, let $R_0$ be a random variable which is
independent of $(A_n, B_n)_{n\in\N}$. The sequence
$(R_k)_{k\in\N_0}$, recursively defined by the random difference
equation $$R_k :=B_k+A_kR_{k-1},\quad k\in\N,$$ forms a Markov
chain which is very well known and well understood. Assuming that
$R_0=0$ and reversing the indices in an equivalent representation
$R_k=A_1\cdot\ldots\cdot A_{k-1}B_1+ A_2\cdot\ldots\cdot
A_{k-1}B_2+\ldots +B_k$ leads to the random variable
$R_k^\ast:=B_1+A_1B_2+\ldots+A_1\cdot\ldots\cdot A_{k-1}B_k$
satisfying $R_k^\ast \od R_k$ for all $k\in\N$. Whenever
\begin{equation}\label{eq:conv}
\text{the series}~\sum_{j\geq 1}B_j \prod_{l=1}^{j-1}A_l~
\text{converges a.s.}
\end{equation}
its infinite version $R_\infty^\ast:=\sum_{j\geq 1}B_j
\prod_{l=1}^{j-1}A_l$ is called perpetuity because of a possible
actuarial application. The study of the random difference
equations and perpetuities has a long history going back to
Kesten~\cite{kesten1973random} and
Grincevi\v{c}ius~\cite{Grincevicius74}. We refer the reader to the
recent monographs~\cite{buraczewski2016stochastic,iksanovrenewal}
containing a comprehensive bibliography on the subject.

It is well-known that conditions $\E \log |A| \in [-\infty, 0)$
and $\E \log^+ |B|<\infty$ are sufficient for \eqref{eq:conv} and
the distributional convergence $R_k\dod R^\ast_\infty$ as
$k\to\infty$. There are numerous results in the literature
concerning the tail behavior of $R^\ast_\infty$. The first
assertion of this flavor is the celebrated theorem by Kesten~
\cite{kesten1973random} (see also Goldie~
\cite{goldie1991implicit} and Grincevi\v{c}ius~
\cite{Grincevicius75}), to be referred to as the
Kesten-Grincevi\v{c}ius-Goldie theorem. It states that the
distribution of $R^\ast_\8$ has a heavy right tail under the
assumptions $A>0$ a.s., $\E A^s=1$ for some $s>0$ and some
additional conditions, see formula \eqref{lem:kes} below for more
details in the particular case $(A,B)=(\rho,\xi)$. The tail
behavior of $R^\ast_\infty$ is also well understood in some other
cases, in particular, when $\P\{|B|>x\}$ is regularly varying at
$\infty$ (see, for instance, \cite{Grincevicius75},
\cite{grey1994regular} and \cite{damek2017stochastic}).

Now we switch attention from the general random difference
equations to a particular one which features in the analysis of
BPRE $Z$. Using the branching property one easily obtains the
following recurrence
\begin{equation*}
\bar R_0:=\E_\omega \Z_0=0,\quad \bar R_k  := \E_\omega
\Z_k=\E_\omega Z_{S_k} = \rho_k \xi_k + \rho_k \E_{\omega}
Z_{S_{k-1}}=  \rho_k \xi_k + \rho_k \bar R_{k-1},\quad k\in\N.
\end{equation*}
This shows, among others, that the Markov chain $(\bar
R_k)_{k\in\N_0}$ is an instance of the random difference equation
which corresponds to $(A,B)=(\rho, \rho\xi)$. Asymptotic
distributional properties of a particular perpetuity which
corresponds to $(A,B)=(\rho,\xi)$ are essentially used in the
proof of Lemma \ref{lem:new}.

\section{Proof strategy}\label{sec:strategy}

A weak convergence result for $T_n$, properly normalized and
centered, will be derived from the corresponding result for
$T_{S_n}$, again properly normalized and centered. In view of
\eqref{repr}, the latter may in principle be affected by the
asymptotic behavior of $S_n$, $W_{S_n}$ or both. Fortunately, the
contribution of $S_n$ is degenerate in the limit, for it is only
regulated by the law of large numbers, fluctuations of $S_n$
around its mean do not come into play. Summarizing, analysis of
the asymptotics of $W_{S_n}$ is our dominating task.

While dealing with $W_{S_n}$ our main arguments follow the
strategy invented by Kesten et al.~\cite{kesten1975limit}. Namely,
for large $n$ we decompose $W_{S_n}$ as a sum of random variables
which are iid under the annealed probability $\P$. For this
purpose we define extinction times
\begin{equation}\label{eq:tau_def}
\tau_0:=0,\quad \tau_{k} := \min \{j>\tau_{k-1} : \Z_j = 0\},\quad
k\in\mathbb{N}.
\end{equation}
Let us emphasize that the extinctions of $Z$ are ignored in the
generations other than $S_1$, $S_2,\ldots$ Set $$\bar
\W_{\tau_n}:=W_{S_{\tau_n}} - W_{S_{\tau_{n-1}}},\quad n\in\N$$
and note that $(\bar \W_{\tau_n},\tau_n-\tau_{n-1})_{n\in\N}$ are
iid random vectors. We have
\begin{equation}\label{eq:main_two_sided_estimate}
\sum_{k=1}^{\tau^{\ast}_n}\bar \W_{\tau_k}\leq \sum_{k=1}^{S_n} Z_k\leq
\sum_{k=1}^{\tau^{\ast}_n+1}
\bar \W_{\tau_k},
\end{equation}
where $\tau_n^\ast$ is the number of extinctions of $Z$ in the
generations $S_0,\ldots, S_n$, that is,
$$\tau^{\ast}_n:=\max \{k\geq 0:\tau_k\leq n\},\quad
n\in\mathbb{N}.$$ It turns out that the extinctions occur
relatively often as the following lemma confirms.
\begin{lem}\label{lem:nu}
Assume that $\E\log \rho\in [-\infty, 0)$ and $\E\log \xi
<\infty$. Then $\E\tau_1 <\infty$. If additionally $\E
\rho^\varepsilon<\infty$ and $\E \xi^\varepsilon<\infty$ for some
$\varepsilon>0$, then $\me \exp (\gamma \tau_1)<\infty$ for some
$\gamma>0$.
\end{lem}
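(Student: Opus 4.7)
The plan is to deduce both assertions from Foster--Lyapunov drift inequalities applied to the annealed Markov chain $(\Z_k)_{k\in\N_0}$, which is Markovian under $\P$ because $((\rho_k,\xi_k))_{k\in\N}$ are i.i.d. The key quenched one-step input, read off from the recursion for $\bar R_k$ in Section~\ref{sec:rde}, is $\E_\omega[\Z_k\mid\Z_{k-1}=m]=\rho_k(m+\xi_k)$. The main technical nuisance common to both parts will be to convert a drift estimate (which by itself only controls the expected hitting time of a bounded set) into a statement about the hitting time of $\{0\}$; to this end I would use the lower bound, valid for any fresh block $(\rho,\xi)$,
\begin{equation*}
\P_\omega(\Z_{k+1}=0\mid\Z_k=m)\;\ge\;\bigl(\min(1/2,\lambda)\bigr)^{m+\xi},
\end{equation*}
obtained by forcing each of the $m$ existing particles and each of the $\xi$ new immigrants in the block to produce no offspring at its very first reproduction; averaging over $(\rho,\xi)$ gives a strictly positive lower bound, uniform in $m\le M$ for any fixed $M$.

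For the first assertion I would take $V(m)=\log(1+m)$. Conditional Jensen (concavity of $\log$) together with independence of $(\rho_k,\xi_k)$ from the past gives
\begin{equation*}
\E[V(\Z_k)\mid\Z_{k-1}=m]-V(m)\;\le\;\E\log\frac{1+\rho(m+\xi)}{1+m},
\end{equation*}
where $(\rho,\xi)$ is a generic i.i.d.\ copy. The integrand converges pointwise to $\log\rho$ as $m\to\infty$ and is dominated by $\log^+\rho+\log(1+\xi)$, which is $\P$-integrable: $\E\log^+\rho<\infty$ since $\E\log\rho\in[-\infty,0)$, and since $\xi\ge 1$ we have $\E\log(1+\xi)\le\log 2+\E\log\xi<\infty$. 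Dominated convergence (in the extended-real sense if $\E\log\rho=-\infty$) then produces a drift $\le-\delta<0$ on $\{m\ge M\}$ for some $M$, $\delta>0$. Foster--Lyapunov yields a finite expected hitting time of $\{0,\ldots,M-1\}$; combined with the accessibility bound above and the finite bound $\E V(\Z_1)\le\E\log(1+\rho\xi)<\infty$, one concludes $\E\tau_1<\infty$.

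For the exponential-moment statement I would instead take $V(m)=m^s$ with $s>0$ small. Under the extra moment assumptions, $\phi(s):=\E\rho^s$ is finite on $[0,\varepsilon]$ with $\phi(0)=1$ and $\phi'(0)=\E\log\rho<0$, so $\phi(s)<1$ for some small $s>0$; shrinking $s$ so that $2s\le\varepsilon$, Cauchy--Schwarz yields $\E\rho^s\xi^s\le(\E\rho^{2s})^{1/2}(\E\xi^{2s})^{1/2}<\infty$. Conditional Jensen combined with the sub-additivity $(a+b)^s\le a^s+b^s$ on $[0,\infty)$ (valid for $s\in(0,1)$) then delivers the geometric drift
\begin{equation*}
\E[V(\Z_k)\mid\Z_{k-1}=m]\;\le\;\lambda V(m)+\beta,\qquad \lambda:=\E\rho^s<1,\ \beta:=\E\rho^s\xi^s<\infty.
\end{equation*}
Together with the accessibility estimate from the first paragraph, this is a standard geometric-ergodicity condition (see Meyn--Tweedie, Chapter~15), which yields exponential decay of $\P(\tau_1>n)$ and hence $\E\exp(\gamma\tau_1)<\infty$ for some $\gamma>0$.
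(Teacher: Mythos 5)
Your argument is correct in outline, but it takes a genuinely different route from the paper. The paper uses no drift criterion: it identifies $(\Z_n)_{n\geq 0}$ with a slight modification of the branching process with immigration in a random environment studied by Key \cite{key1987limiting}, proves a counterpart of Key's Theorem 3.3 giving a limit law $\pi$ with $\pi(0)>0$, and then derives the convolution identity $v(n)=\sum_{k=0}^{n-1}v(k)h(1,n-k)$ for $v(n)=\P\{\tau_1>n\}$, equivalently $V(x)=H(x)/(1-H(x))$ for generating functions; $\E\tau_1<\infty$ follows from $H(1)=1-\pi(0)<1$, and the exponential moment from $h(1,j)\le \E(\rho\xi)^\delta (\E\rho^\delta)^{j-1}$ with $\E\rho^\delta<1$, which makes the radius of convergence of $H$ exceed one. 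Your Foster--Lyapunov treatment of the annealed chain $(\Z_k)$ --- logarithmic Lyapunov function for the first claim, $V(m)=m^s$ with $\E\rho^s<1$ and $\E\rho^s\xi^s<\infty$ via Cauchy--Schwarz for the second --- is self-contained and avoids adapting Key's lemmas; note that your geometric drift is essentially the same moment estimate as the paper's bound on $h(1,j)$, and your one-step bound $\P_\omega\{\Z_{k+1}=0\mid\Z_k=m\}\ge(\min(1/2,\lambda_{k+1}))^{m+\xi_{k+1}}$ (which is correct) plays the same role as the paper's lower bounds on $p(n,0)$ and $a(n,0)$ used to get $\pi(0)>0$. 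One caution: Meyn--Tweedie's Chapter 15 results presuppose $\psi$-irreducibility and petiteness, so either verify those hypotheses or, more simply, run the direct regeneration argument your accessibility bound already enables: excursion lengths between visits to the bounded set have a uniformly bounded exponential moment by the drift, the number of such visits before hitting $0$ is dominated by a geometric random variable, and taking $\gamma$ small closes the estimate; likewise, for the first claim you need $\sup_{m\le M}\E[\log(1+\Z_1)\mid \Z_0=m]<\infty$, which your bound $\E\log(1+\rho(m+\xi))<\infty$ supplies.
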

The proof of Lemma \ref{lem:nu} is given in the Appendix.

Under the assumptions of our main results $\mu:=\E\tau_1<\infty$
by Lemma \ref{lem:nu}. The strong law of large numbers for renewal
processes makes it plausible that, for large $n$,
\begin{equation}\label{eq:approximation}
W_{S_n}~ \approx~ \sum_{k=1}^{\lfloor \mu^{-1}n\rfloor} \bar
\W_{\tau_k}.
\end{equation}
The right-hand side, properly centered and normalized, converges
in distribution if, and only if, the distribution of $\bar
\W_{\tau_1}$ belongs to the domain of attraction of a stable law.
To check the latter, for $i\in\N$, we divide particles residing in
the generations $S_{i-1}+1,\ldots, S_i$ into groups:
\begin{itemize}
\item $\mathcal{P}_{1,i}$~--~the progeny residing in the generations $S_{i-1}+1,\ldots, S_i-1$ of the immigrants arriving
in the generations $S_{i-1},\ldots, S_i-2$, the number of these being
$$\W^0_i:=\sum_{j=S_{i-1}+1}^{S_i-1}\sum_{k=j}^{S_i-1} Z(j,k);$$
\item $\mathcal{P}_{2,i}$~--~ the progeny residing in the generations $S_{i-1}+1,\ldots, S_i-1$ of the immigrants arriving in the generations $0,1,\ldots, S_{i-1}-1$,
the number of these being $$\W^\downarrow_i:=
\sum_{j=1}^{S_{i-1}}\sum_{k=S_{i-1}+1}^{S_i-1} Z(j,k);$$
\item $\mathcal{P}_{3,i}$~--~ particles of the generation $S_i$, the number of these being $\Z_i$.
\end{itemize}
The aforementioned partition of the population which is depicted
on Figure \ref{fig:population} induces the following
decompositions
$$\W_i = \W^0_i +  \W^\downarrow_i +\Z_i,\quad i\in\N\quad \text{a.s.}$$
and $$\bar \W_{\tau_1}=\sum_{i=1}^{\tau_1} \W^0_i
+\sum_{i=1}^{\tau_1} \W^\downarrow_i +\sum_{i=1}^{\tau_1}\Z_i\quad
\text{a.s.}$$ which are of primary importance for what follows.

\begin{figure}
\centering
\begin{tikzpicture}
\draw (0,6) node[right=5pt]{$S_0$}; \draw (0,6) -- (3,0) --
(-3,0)-- (0,6); \draw[fill=gray!50] (0,6) -- (1,4) -- (-1,4) --
(0,6) node[below=26pt]{$\mathcal{P}_{1,1}$}; \draw[fill=gray!50]
(1,4) -- (2,2) -- (0,2) -- (1,4)
node[below=26pt]{$\mathcal{P}_{1,2}$}; \draw[fill=gray!50] (2,2)
-- (3,0) -- (1,0) -- (2,2) node[below=26pt]{$\mathcal{P}_{1,3}$};
\draw[line width=1mm] (-1,4) -- (1,4) node[right=5pt]{$S_1$};
\draw[line width=1mm] (-2,2) -- (2,2) node[right=5pt]{$S_2$};
\draw[line width=1mm] (-3,0) -- (3,0) node[right=5pt]{$S_3$};
\draw (-0.5,1) node{$\mathcal{P}_{2,3}$}; \draw (-0.5,3)
node{$\mathcal{P}_{2,2}$};
\end{tikzpicture}
\caption{The generations $0$ through $S_3$ of the BPRE $Z$ and the
partition of the corresponding population into parts
$\mathcal{P}_{i,j}$, $i,j=1,2,3$. The bold horizontal lines
represent particles in the generations $S_1$, $S_2$ and $S_3$,
that is, those comprising the groups $\mathcal{P}_{3,i}$,
$i=1,2,3$. By definition, $\mathcal{P}_{2,1}=\oslash$.}
\label{fig:population}
\end{figure}
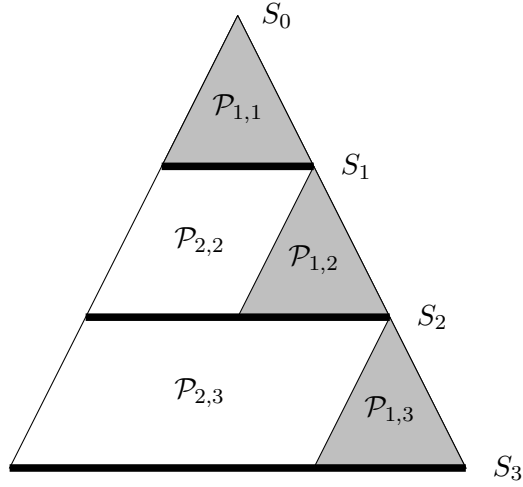
Depending on the assumptions ($\rho 1$), ($\rho 2$), ($\xi 1$) or
($\xi 2$) the random variables $\sum_{i=1}^{\tau_1}\W^0_i$,
$\sum_{i=1}^{\tau_1}\W^\downarrow_i$ and $\sum_{i=1}^{\tau_1}\Z_i$
may exhibit different tail behaviors. Often, one of the random
variables dominates the others thereby determining the tail
behavior of the whole sum $\bar \W_{\tau_1}$.

\section{Tail behavior of \texorpdfstring{$\bar{\W}_{\tau_1}$}{Wtau}}\label{sec:tails}

In this section we do not assume that $\E\xi<\infty$.

We first analyze the tail behavior of $\sum_{i=1}^{\tau_1}\W^0_i$.
Note that by construction $(\W^0_i)_{i\in\N}$ are iid and the
random variable $\tau_1$ {\it does not depend on the future of the
sequence } $(\W^0_i)_{i\in\N}$ in the sense of the definition
given by Denisov, Foss, Korshunov on p.~987 in
\cite{denisov:foss:korshunov}. The latter means that, for each
$n\in\N$, the collections of random variables $((\W^0_k)_{k\le n},
{\bf 1}_{\{\tau_1 \le n\}})$ and $(\W^0_k)_{k
> n}$ are independent. This observation in combination with Corollary 3 in \cite{denisov:foss:korshunov} suggests that it is enough
to analyze the tail behavior of just one summand, $\W^0_1$ say,
provided that the right tail of $\W^0_1$ is regularly varying and
heavier than the right tail of $\tau_1$.
\begin{lem}
\label{lem:lu1}
Assume that \eqref{eq:reg_var_xi} holds with some $\beta>0$. Then $$\P\{\W^0_1 >x\}~\sim~
\E\vartheta^{\beta/2} x^{-\beta/2}\ell(x^{1/2}),\quad x\to\infty,$$ where $\vartheta$ is a random variable with Laplace transform
\begin{equation}\label{eq:lt_limit}
\E e^{-s\vartheta}=1/\cosh(s^{1/2}),\quad s\geq 0.
\end{equation}
\end{lem}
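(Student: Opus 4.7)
The plan is to recognise $\W^0_1$ as the total progeny of a specific critical Galton--Watson process with immigration, compute its Laplace transform in closed form, extract a scaling limit, and then transfer this to a tail asymptotic via a Breiman-type argument.

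First I would identify the relevant process. Since $\omega_n = 1/2$ for every $n$ with $1 \le n \le S_1 - 1$, conditional on $\xi_1 = m$ the reproduction in the first block is governed by $\mathrm{Geom}(1/2)$ and $\xi_1$ only sets the horizon. Moreover, interchanging the order of summation gives $\W^0_1 = \sum_{k=1}^{S_1 - 1} Z_k = W_{S_1-1}$. Hence, conditional on $\xi_1 = m$,
\begin{equation*}
\W^0_1 \,\od\, W_{m-1},
\end{equation*}
where $(W_n)_{n\ge 0}$ denotes the cumulative population of an unperturbed critical $\mathrm{Geom}(1/2)$ Galton--Watson process with one immigrant per generation, and is independent of $\xi_1$.

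Second I would compute the Laplace transform of $W_n$ explicitly. Decompose $W_n = \sum_{\ell=1}^n \mathcal{T}_\ell^{(\ell)}$ as a sum of independent contributions, where $\mathcal{T}_\ell$ is the total progeny in the first $\ell$ generations of a single-ancestor $\mathrm{Geom}(1/2)$ critical Galton--Watson tree. The branching identity with the offspring pgf $f(s)=1/(2-s)$ yields, for $g_k(\lambda) := \E e^{-\lambda \mathcal{T}_k}$, the M\"{o}bius recursion $g_k(\lambda) = 1/(2 - e^{-\lambda} g_{k-1}(\lambda))$, $g_0=1$. Solving it via the linear recurrence $p_{k+1} = 2 p_k - e^{-\lambda} p_{k-1}$ with $g_k = p_k/p_{k+1}$ gives
\begin{equation*}
g_k(\lambda) \;=\; \frac{r_+^k + r_-^k}{r_+^{k+1} + r_-^{k+1}}, \qquad r_\pm \;:=\; 1 \pm \sqrt{1 - e^{-\lambda}}.
\end{equation*}
The product $\E e^{-\lambda W_n} = \prod_{\ell=1}^n g_\ell(\lambda)$ telescopes to $2/(r_+^{n+1} + r_-^{n+1})$. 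Substituting $\lambda = s/m^2$ and using $\sqrt{1-e^{-s/m^2}} = \sqrt{s}/m\,(1+o(1))$ together with $(1 \pm \sqrt{s}/m)^m \to e^{\pm \sqrt{s}}$ shows $\E e^{-s W_{m-1}/m^2} \to 1/\cosh(\sqrt{s})$; hence $W_{m-1}/m^2 \dod \vartheta$.

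Third, to transfer this weak limit into the stated tail asymptotic, I would employ a Breiman-style argument. The heuristic $\W^0_1 \approx \xi_1^2 \vartheta$, with $\xi_1$ independent of the limiting $\vartheta$, combined with Breiman's lemma gives
\begin{equation*}
\P\{\xi_1^2 \vartheta > x\} \;\sim\; \E \vartheta^{\beta/2} \cdot x^{-\beta/2} \ell(\sqrt{x}),
\end{equation*}
since $\P\{\xi_1^2 > t\} = \P\{\xi_1 > \sqrt{t}\} \sim t^{-\beta/2} \ell(\sqrt{t})$ is regularly varying of index $-\beta/2$ and $\vartheta$ has exponential moments (the Laplace transform $1/\cosh(\sqrt{s})$ is analytic in a neighbourhood of $s=0$), so $\E \vartheta^{\beta/2 + \varepsilon} < \infty$ for every $\varepsilon > 0$. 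To promote the heuristic to a genuine equivalence, I would establish a uniform tail bound on $W_{m-1}$: analytically continuing the explicit Laplace transform to $\lambda \in (-c/m^2, 0)$ shows $\E\exp(c W_{m-1}/m^2) \le C$ uniformly in $m$, whence Chernoff yields $\P\{W_{m-1} > x\} \le C e^{-cx/m^2}$. Combined with P\'{o}lya's uniform convergence of the continuous distribution function $y \mapsto \P\{\vartheta > y\}$, this justifies passing to the limit inside $\sum_m \P\{\xi_1 = m\}\P\{W_{m-1} > x\}$.

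I anticipate the telescoping Laplace transform computation to be the cleanest ingredient, as it reduces the problem to the explicit limit $\vartheta$. The main obstacle is the last step: one must split the sum over $m$ at thresholds $\delta\sqrt{x}$ and $\delta^{-1}\sqrt{x}$, handle the tails via the uniform Chernoff estimate and Potter bounds on $\P\{\xi_1 > t\}$, and show that each of the three error terms is $o(x^{-\beta/2}\ell(\sqrt{x}))$ when $\delta\to 0$. The required uniform exponential bound, although elementary, demands careful manipulation of the analytic continuation of $(1+\sqrt{1-e^{-\lambda}})^{m+1}+(1-\sqrt{1-e^{-\lambda}})^{m+1}$ to purely imaginary $\sqrt{1-e^{-\lambda}}$.
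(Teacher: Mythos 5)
Your proposal is correct, and it reaches the conclusion by a genuinely different route than the paper. The paper first reduces $\W^0_1\od\Wc_{\xi_1-1}$ (as you do), but then cites Pakes for the scaling limit $n^{-2}\Wc_n\dod\vartheta$, proves the uniform exponential bound $\sup_n\E\exp(\beta n^{-2}\Wc_n)<\infty$ by an inductive estimate on the Laplace-transform recursion, and transfers the tail through the first-passage process $\upsilon_x=\inf\{k:\Wc_k>x\}$: writing $\P\{\Wc_{\xi_1-1}>x\}=\E h(\upsilon_x)$ with $h(y)=\P\{\xi_1\ge y\}$, using the Feller inversion $\upsilon_x/x^{1/2}\dod\vartheta^{-1/2}$ and a uniform-integrability argument based on Potter bounds, the moment estimate $\E\upsilon_x^{-\beta}=O(x^{-\beta/2})$ and the exponential bound. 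You instead solve the same Laplace recursion in closed form (the telescoping product $\E e^{-\lambda\Wc_n}=2/(r_+^{n+1}+r_-^{n+1})$), which simultaneously re-derives the Pakes limit $1/\cosh(\sqrt{s})$ and, after continuation to small negative arguments, the uniform Chernoff bound $\P\{\Wc_{m-1}>x\}\le Ce^{-cx/m^2}$; you then do the tail transfer directly by conditioning on $\xi_1=m$ and running a Breiman-type three-range splitting at $\delta\sqrt{x}$ and $\delta^{-1}\sqrt{x}$, with P\'olya's uniform convergence on the middle range (the law of $\vartheta$ is indeed continuous). What each approach buys: yours is more self-contained and explicit (no citation of Pakes, the exponential bound drops out of the closed form), while the paper's Proposition on $\Wc_\varsigma$ is formulated once for a general independent $\varsigma$ and its inversion argument avoids manipulating the analytic continuation; the ingredients (exponential bounds, Potter bounds, continuity of $\vartheta$) are morally the same. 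Two points where your sketch must be carried out carefully, as you partly anticipate: the crude bound $Ce^{-c/\delta^2}$ on the range $m\le\delta\sqrt{x}$ is not by itself $o(x^{-\beta/2}\ell(x^{1/2}))$, so the Chernoff estimate must be combined with Potter bounds (e.g.\ a dyadic decomposition in $m$) to dominate that range by $\varepsilon(\delta)\,\P\{\xi_1>\sqrt{x}\}$; and the passage from the explicit formula on $\lambda>0$ to finiteness of $\E\exp(cW_{m-1}/m^2)$ needs a Landau-type argument (the real point of the convergence abscissa of a Laplace transform of a nonnegative variable is a singularity) or, alternatively, the paper's elementary induction on the recursion at positive arguments.
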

The proof of Lemma \ref{lem:lu1} is given in Section \ref{sec:GW}.
In the next two lemmas we provide moment estimates for the two
other summands $\sum_{i=1}^{\tau_1}\W^\downarrow_i$ and
$\sum_{i=1}^{\tau_1}\Z_i$.
\begin{lem}\label{lem:zmom}
Assume that $\E\log\rho\in [-\infty, 0)$ and that, for some
$\kappa \le 2$, $\E (\rho \xi)^{\kappa}$ and $\E \xi^\kappa$ are
finite. Then $\E \Z_1^\kappa<\infty$ and there exists a positive
constant $C$ such that, for all $n\in\N$,
\begin{equation}\label{eq:5}
\E \Z_n^\kappa \leq \left\{  \begin{array}{cc}
C& \mbox{ if } \gamma <1,\\
Cn& \mbox{ if } \gamma =1,\\
C\gamma^n& \mbox{ if } \gamma >1,
\end{array}\right.
\end{equation}
where $\gamma:=\E\rho^\kappa$. If additionally $\E
\xi^{2\kappa}<\8$, then
\begin{equation} \label{eq:lu1}
\E \W_1^\kappa <\8.
\end{equation}
\end{lem}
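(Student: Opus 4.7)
The plan is to propagate moment bounds on $\Z_n$ via a one-step recursion derived from the branching decomposition between consecutive sparsity points, and then to treat $\W_1$ by summing generation sizes inside a single block.

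First, to show $\E\Z_1^\kappa<\infty$, condition on $\omega$ and observe that $(Z_j)_{0\le j\le S_1-1}$ is a critical Geom$(1/2)$ Galton--Watson process with unit immigration starting from $Z_0=0$, so $\E_\omega Z_j^\kappa \leq Cj^\kappa$ for $\kappa\in(0,2]$ by standard second-moment estimates (since $\E_\omega Z_j = j$ and $\E_\omega Z_j^2 = 2j^2+j$, interpolating in $\kappa$). Since $\Z_1=Z_{S_1}=\sum_{i=0}^{Z_{S_1-1}}G_i$ with $G_i$ iid Geom$(\lambda_1)$ (given $\omega$) and independent of $Z_{S_1-1}$, the concavity of $x\mapsto x^\kappa$ for $\kappa\in(0,1]$ (respectively the von~Bahr--Esseen inequality for $\kappa\in[1,2]$) bounds $\E_\omega\Z_1^\kappa$ by a polynomial in $\xi_1$ and $\rho_1$ whose expectation is finite under $\E(\rho\xi)^\kappa<\infty$, $\E\xi^\kappa<\infty$, and $\E\rho^\kappa<\infty$ (the last following from $\E(\rho\xi)^\kappa<\infty$ and $\xi\geq 1$).

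Next, for the recurrence, condition on $\Z_{n-1}$ at generation $S_{n-1}$ and on the $n$-th environment block $(\xi_n,\lambda_n)$ (which is independent of $\Z_{n-1}$), and decompose
$$\Z_n = \sum_{i=1}^{\Z_{n-1}}\mathcal{J}_i + \mathcal{K}_n,$$
where the $\mathcal{J}_i$ are iid given the block, have quenched mean $\rho_n$, and count descendants at $S_n$ of each particle at $S_{n-1}$, while $\mathcal{K}_n$ has the same quenched distribution as $\Z_1$ using block $n$ data. Centring $\mathcal{J}_i-\rho_n$ and applying von~Bahr--Esseen conditionally on $(\Z_{n-1},\omega)$ (or subadditivity of $x^\kappa$ for $\kappa<1$), then integrating and using independence of the $n$-th block from $\Z_{n-1}$, produces
$$\E\Z_n^\kappa \leq \gamma\,\E\Z_{n-1}^\kappa + C_0,\qquad \gamma=\E\rho^\kappa,$$
with $C_0<\infty$ collecting the quenched moment bounds on $\mathcal{J}_1$ and $\mathcal{K}_n$ obtained above. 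Iterating with $\Z_0=0$ gives $\E\Z_n^\kappa \leq C_0\sum_{i=0}^{n-1}\gamma^i$, which bifurcates into the three claimed regimes according to whether $\gamma<1$, $\gamma=1$, or $\gamma>1$.

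Finally, for $\E\W_1^\kappa<\infty$, write $\W_1=\sum_{j=1}^{S_1}Z_j$: applying H\"older's inequality (or subadditivity for $\kappa<1$), combining the quenched bound $\E_\omega Z_j^\kappa \leq Cj^\kappa$ for $j<S_1$ with the bound on $\E_\omega\Z_1^\kappa$ from the first step, and taking expectations using $\E\xi^{2\kappa}<\infty$ and $\E(\rho\xi)^\kappa<\infty$ (together with $\xi\geq 1$ to absorb lower powers of $\xi$ into higher ones), yields the claim. The main technical difficulty is obtaining sharp quenched moment bounds on $\mathcal{J}_1$, $\mathcal{K}_n$ and the intermediate sizes $Z_j$ uniformly in the environment, and then extracting exactly $\gamma=\E\rho^\kappa$ (not a larger multiplier) as the coefficient in the one-step recursion; the von~Bahr--Esseen centring around $\rho_n$ is essential for this to work cleanly.
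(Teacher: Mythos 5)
Your decomposition of $\Z_n$ over the particles at generation $S_{n-1}$ plus the within-block immigration is exactly the paper's decomposition \eqref{eq:w1}, so the skeleton is right; the genuine gap is the claim that the one-step recursion has the form $\E \Z_n^\kappa \leq \gamma\,\E\Z_{n-1}^\kappa + C_0$ with a \emph{constant} $C_0$. For $\kappa\in(1,2]$ the error term produced by centring (von Bahr--Esseen or a second-moment computation) is of the size of \emph{lower-order} moments of $\Z_{n-1}$: e.g. for $\kappa=2$ one has the exact identity $\E\Z_n^2=\gamma\,\E\Z_{n-1}^2+c_1\E\Z_{n-1}+c_2$, and under the hypotheses of the lemma it may well happen that $\E\rho>1$ (take $\rho\in\{3,1/100\}$ with equal probabilities, so $\E\log\rho<0$ but $\E\rho>1$), in which case $\E\Z_{n-1}\asymp(\E\rho)^{n}\to\infty$ and no constant $C_0$ works. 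The conclusion \eqref{eq:5} survives only because $(\E\rho)^n\le\gamma^{n/\kappa}=o(\gamma^n)$, but this has to be argued. The paper's proof is organised around precisely this point: its recursion \eqref{eq:lem_mom_proof1} carries an inhomogeneous term proportional to $\E\Z_{n-1}^{\kappa/2}$, which is then controlled by a bootstrap --- first prove \eqref{eq:5} for exponents $\le 1$ via Jensen and subadditivity, then split into the cases $\gamma\le 1$ (where $\E\rho^{\kappa/2}<1$ strictly, so $\sup_n\E\Z_n^{\kappa/2}<\infty$ and the error term really is bounded) and $\gamma>1$ (where $\E\Z_n^{\kappa/2}\le C\gamma^{n/2}$ and iterating $\E\Z_n^\kappa\le\gamma\E\Z_{n-1}^\kappa+C\gamma^{n/2}$ still gives $C\gamma^n$). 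Your proposal flags the difficulty of getting the coefficient exactly $\gamma$ but is silent on the non-constant inhomogeneous term, which is the actual crux; relatedly, getting the clean coefficient $\gamma$ cannot be done by a plain $c_\kappa$-inequality recombination (that multiplies $\gamma$ by a factor $>1$ and destroys the cases $\gamma\le1$), and the paper's device for this is conditional Jensen down to the quenched second moment followed by subadditivity of $x\mapsto x^{\kappa/2}$.

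A secondary caution on \eqref{eq:lu1}: bounding $\W_1=\sum_{j=1}^{S_1}Z_j$ by H\"older over the whole block produces cross terms such as $\xi_1^{\kappa-1}\E_\omega\Z_1^\kappa$, whose annealed expectation requires moments like $\E\big[\rho^\kappa\xi^{2\kappa-1}\big]$ that are not implied by $\E(\rho\xi)^\kappa<\infty$ and $\E\xi^{2\kappa}<\infty$. The safe route (and the paper's) is to split $\W_1=W_{\xi_1-1}+\Z_1$ and bound the two pieces separately: $\E W_{\xi_1-1}^\kappa\le C\,\E\xi^{2\kappa}$ via the uniform moment bound for the critical Galton--Watson process with immigration, and $\E\Z_1^\kappa<\infty$ from \eqref{eq:5}; here a multiplicative constant from a $c_\kappa$-inequality is harmless.
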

\begin{rem}
Since $\xi\geq 1$ a.s., the assumption $\E (\rho
\xi)^\kappa<\infty$ entails $\E \rho^\kappa<\infty$. This explains
the absence of the latter condition in Lemma \ref{lem:zmom}.
\end{rem}
\begin{lem}\label{lem:Wnu}
Assume that, for some $\kappa \le 2$, $\E \rho^\kappa<1$, $\E
(\rho \xi)^\kappa$ and $\E \xi^\kappa$ are finite. Then, for all
$\kappa_0 \in (0,\kappa)$,
\begin{equation}\label{5??}
\E  \left(\sum_{i=1}^{\tau_1}\Z_i  \right)^{\kappa_0} <
\infty.
\end{equation}
If additionally $\E\xi^{3\kappa/2}<\infty$, then
\begin{equation}\label{eq:5?}
\E\left(\sum_{i=1}^{\tau_1}\W_i^\downarrow\right)^{\kappa_0}<\infty.
\end{equation}
\end{lem}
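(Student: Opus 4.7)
The argument for both bounds proceeds in a unified way: combine exponential tail decay of $\tau_1$, a uniform in $i$ bound $\sup_i\E X_i^\kappa<\infty$ for $X_i\in\{\Z_i,\W_i^\downarrow\}$, and a H\"older decomposition of each summand. First I would verify the hypotheses of the exponential-moment clause in Lemma \ref{lem:nu}: since $\xi\ge 1$ a.s., $\E\rho^\kappa\le\E(\rho\xi)^\kappa<\infty$ and $\E\xi^\kappa<\infty$; Lyapunov then makes $\E\rho^\varepsilon,\E\xi^\varepsilon<\infty$ for every $\varepsilon\in(0,\kappa]$, so $\E e^{\gamma\tau_1}<\infty$ for some $\gamma>0$, and in particular $\P\{\tau_1\ge i\}\le Ce^{-\gamma i}$ and $\E\tau_1^a e^{\gamma\tau_1/2}<\infty$ for every $a\ge 0$.

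Writing $X_i$ for either $\Z_i$ or $\W_i^\downarrow$, subadditivity (when $\kappa_0\le 1$) or Jensen (when $\kappa_0>1$) gives
$$\Bigl(\sum_{i=1}^{\tau_1}X_i\Bigr)^{\kappa_0}\le \tau_1^{(\kappa_0-1)^+}\sum_{i=1}^{\infty}X_i^{\kappa_0}\mathbf{1}_{\{\tau_1\ge i\}}.$$
H\"older applied termwise with $p=\kappa/\kappa_0>1$ and its conjugate $q$ yields
$$\E\bigl[\tau_1^{(\kappa_0-1)^+}X_i^{\kappa_0}\mathbf{1}_{\{\tau_1\ge i\}}\bigr]\le(\E X_i^\kappa)^{\kappa_0/\kappa}\bigl(\E\tau_1^{(\kappa_0-1)^+q}\mathbf{1}_{\{\tau_1\ge i\}}\bigr)^{1/q},$$
and combining $\mathbf{1}_{\{\tau_1\ge i\}}\le e^{-\gamma i/2}e^{\gamma\tau_1/2}$ with the exponential moment of $\tau_1$ bounds the second factor by $Ce^{-ci}$. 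Summing the resulting geometric series reduces the whole problem to uniform boundedness of $\E X_i^\kappa$.

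For $X_i=\Z_i$ the uniform bound is exactly the $\gamma=\E\rho^\kappa<1$ case of Lemma \ref{lem:zmom}, closing the first claim. For $X_i=\W_i^\downarrow$, the structural observation is that, conditional on $\Z_{i-1}$ and on the block-$i$ environment, $\W_i^\downarrow$ is the total progeny in $\xi_i-1$ generations of $\Z_{i-1}$ independent ordinary critical Galton--Watson trees with ${\rm Geom}(1/2)$ offspring (since $\omega_n=1/2$ at every $n=S_{i-1}+1,\ldots,S_i-1$), so
$$\W_i^\downarrow\stackrel{d}{=}\sum_{\ell=1}^{\Z_{i-1}}T_\ell^{(\xi_i-1)},$$
with $T^{(n)}_\ell$ iid copies of the total progeny $T^{(n)}$ in $n$ generations, independent of $\Z_{i-1}$, and $\xi_i$ independent of $\Z_{i-1}$ under $\P$ by the iid block structure. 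For $\kappa\in(1,2]$ Jensen on the inner sum gives $\E[(\W_i^\downarrow)^\kappa\mid\Z_{i-1},\xi_i]\le\Z_{i-1}^\kappa\E(T^{(\xi_i-1)})^\kappa$; a direct covariance computation produces $\E T^{(n)}=n$ and $\V T^{(n)}\asymp n^3$, and interpolation via H\"older then yields $\E(T^{(n)})^\kappa\le Cn^{2\kappa-1}$, whence
$$\E(\W_i^\downarrow)^\kappa\le C\,\E\Z_{i-1}^\kappa\cdot\E\xi^{2\kappa-1}\le C'\,\E\xi^{3\kappa/2}<\infty,$$
using $\xi\ge 1$ and $2\kappa-1\le 3\kappa/2$ for $\kappa\le 2$, together with $\E\Z_{i-1}^\kappa\le C$ uniformly from Lemma \ref{lem:zmom}. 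For $\kappa\le 1$ concavity of $x\mapsto x^\kappa$ gives $\E[(\W_i^\downarrow)^\kappa\mid\Z_{i-1},\xi_i]\le(\Z_{i-1}(\xi_i-1))^\kappa$, leading to $\E(\W_i^\downarrow)^\kappa\le\E\Z_{i-1}^\kappa\cdot\E\xi^\kappa<\infty$, which is even weaker than the stated hypothesis.

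The main technical hurdle is the truncated critical-GW moment estimate $\E(T^{(n)})^\kappa\lesssim n^{2\kappa-1}$; everything else is bookkeeping with H\"older, exponential moments of $\tau_1$, and the iid block structure. The margin $3\kappa/2$ vs.\ $2\kappa-1$ leaves comfortable room for the Lyapunov passage in the moment chain.
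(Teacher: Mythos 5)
Your proposal is correct and follows essentially the same route as the paper: reduce via the exponential tail of $\tau_1$ (Lemma \ref{lem:nu}) and H\"older's inequality to uniform bounds on $\E\Z_i^\kappa$ and $\E(\W_i^\downarrow)^\kappa$, obtaining the former from Lemma \ref{lem:zmom} and the latter from the representation of $\W_i^\downarrow$ as a sum of $\Z_{i-1}$ iid copies of the truncated Galton--Watson total progeny $\Yc(1,\xi_i-1)$ with the moment formulas \eqref{eq:harris}. The only deviations are cosmetic: you apply H\"older termwise with a $\tau_1^{(\kappa_0-1)^+}$ factor rather than conditioning on $\{\tau_1=n\}$, and for $\kappa\in(1,2]$ you use the crude bound $N^\kappa\E T^\kappa$ with the interpolated estimate $\E(\Yc(1,n))^\kappa\lesssim n^{2\kappa-1}$ instead of the paper's conditional second-moment decomposition, both of which suffice under $\E\xi^{3\kappa/2}<\infty$.
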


Lemma \ref{prop:main2} states that under the assumption $(\rho 1)$
the distribution of $\sum_{k=1}^{\tau_1} \big(
\Z_k+\W_k^\downarrow \big)$ has a power tail.
\begin{lem} \label{prop:main2}
Assume that $(\rho 1)$ holds for some $\alpha\in (0,2]$,
$\E\xi^{3\alpha/2}<\infty$ and $\E(\rho\xi)^\alpha<\infty$. Then
$$\P\bigg\{ \sum_{k=1}^{\tau_1} \big( \Z_k + \W_k^\downarrow  \big)
>x \bigg\} \sim C_2(\alpha) x^{-\a},\quad x\to\infty$$
for a positive constant $C_2(\alpha)$.
\end{lem}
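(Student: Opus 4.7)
I would identify the perpetuity that drives the tail, dispose of all lower-order contributions by the moment bounds already established, and apply the Kesten-Grincevi\v{c}ius-Goldie theorem to conclude.

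First I would dispose of $\sum_{i=1}^{\tau_1}\W_i^\downarrow$ and of the quenched fluctuations of $\sum_{i=1}^{\tau_1}\Z_i$. By log-convexity of $\kappa\mapsto\log\E\rho^\kappa$ combined with $(\rho 1)$, one has $\E\rho^\kappa<1$ for every $\kappa\in(0,\alpha)$, so Lemma~\ref{lem:Wnu}, applicable since $\E(\rho\xi)^\alpha<\infty$ and $\E\xi^{3\alpha/2}<\infty$, gives finite moments of every order below $\alpha$ for $\sum_{i=1}^{\tau_1}\W_i^\downarrow$, hence tail $o(x^{-\alpha})$ by Markov. Writing $\Z_i=\bar R_i+M_i$ with $\bar R_i=\E_\omega\Z_i$ satisfying $\bar R_i=\rho_i\xi_i+\rho_i\bar R_{i-1}$, the quenched fluctuation $M_i$ obeys $M_i=\rho_iM_{i-1}+N_i$ with a martingale-difference input $N_i$ under $\P_\omega$ whose conditional variance is controlled in terms of $\Z_{i-1}$ and $\xi_i$. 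Using Lemma~\ref{lem:zmom} to bound these conditional moments and unrolling $M_i = \sum_{j=1}^{i} N_j \prod_{l=j+1}^{i}\rho_l$, I would show that $\sum_{i=1}^{\tau_1}M_i$ has finite moments of every order below $\alpha$, hence tail $o(x^{-\alpha})$ as well. This reduces the problem to estimating the tail of the purely environmental quantity $\sum_{i=1}^{\tau_1}\bar R_i$.

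The random difference equation $\bar R_i=\rho_i\xi_i+\rho_i\bar R_{i-1}$ corresponds to $(A,B)=(\rho,\rho\xi)$, and its perpetuity $R_\infty^\ast:=\sum_{k\ge 1}\rho_k\xi_k\prod_{j=1}^{k-1}\rho_j$ satisfies the Kesten-Grincevi\v{c}ius-Goldie hypothesis in view of $(\rho 1)$ and $\E(\rho\xi)^\alpha<\infty$, giving $\P\{R_\infty^\ast>x\}\sim c\,x^{-\alpha}$ for some $c>0$. Replacing the infinite sum by $\sum_{i=1}^{\tau_1}\bar R_i$ should alter the tail only by $o(x^{-\alpha})$: by Lemma~\ref{lem:nu}, whose additional hypothesis $\E\rho^\eps<\infty$ and $\E\xi^\eps<\infty$ for some $\eps>0$ holds under our current assumptions, $\tau_1$ has an exponential moment, and so with overwhelming probability $\tau_1$ exceeds any slowly growing threshold. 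Combining the three steps would yield the asserted tail with some positive constant $C_2(\alpha)$.

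The delicate point is the final comparison between the truncated sum $\sum_{i=1}^{\tau_1}\bar R_i$ and the perpetuity $R_\infty^\ast$: since $\tau_1$ depends on the branching realization and not solely on the environment, one cannot simply pass to $\tau_1\to\infty$ inside the event $\{R_\infty^\ast>x\}$. Both directions have to be handled: on $\{R_\infty^\ast>x\}$, an atypically favorable environment typically forces the branching process to survive well beyond any fixed horizon, so that $\sum_{i=1}^{\tau_1}\bar R_i$ captures the bulk of the perpetuity; conversely, the contribution of $\{\sum_{i=1}^{\tau_1}\bar R_i>x,\;R_\infty^\ast\le x/2\}$ must be shown to be $o(x^{-\alpha})$ using the exponential tail of $\tau_1$ together with the monotonicity of the partial sums. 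This is where the interplay between the environment (source of the heavy tail) and the random stopping time $\tau_1$ (coupled to the branching) has to be treated with most care.
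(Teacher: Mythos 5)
There is a genuine gap, and it occurs already in your first reduction. The implication you use twice — ``finite moments of every order below $\alpha$, hence tail $o(x^{-\alpha})$ by Markov'' — is false: Markov's inequality with exponent $\kappa_0<\alpha$ only gives $O(x^{-\kappa_0})$, and a distribution with tail exactly of order $x^{-\alpha}$ (Pareto of index $\alpha$) has all moments of order below $\alpha$ finite. Since $(\rho 1)$ forces $\E\rho^\alpha=1$, Lemma \ref{lem:Wnu} is only applicable with $\kappa<\alpha$ and can never yield more than moments of order strictly below $\alpha$; the same applies to your martingale-fluctuation sum $\sum_{i\le\tau_1}M_i$ bounded through Lemma \ref{lem:zmom}. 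Worse, the conclusion is not just unproven but wrong in substance: $\W_k^\downarrow$ is the within-block progeny of the $\Z_{k-1}$ particles at the block boundary, so on the event that the realized process reaches size of order $x$ (an event of probability of order $x^{-\alpha}$) the sum $\sum_k\W_k^\downarrow$ is itself of order $x$ whenever $\P\{\xi\geq 2\}>0$, and likewise the deviation of $\sum_k\Z_k$ from its quenched mean is of order $x$ with probability of order $x^{-\alpha}$ (conditionally on a favourable environment, $\Z_k$ behaves like a non-degenerate random multiple of $\bar R_k$). Hence both terms you try to discard contribute to $C_2(\alpha)$ at the exact order $x^{-\alpha}$, and the problem cannot be reduced by moment bounds to the purely environmental quantity $\sum_{i\le\tau_1}\bar R_i$.

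The second problem is that the step you yourself flag as ``the delicate point'' is precisely where the entire content of the lemma lies, and you do not carry it out. Note in addition that $\P\{\sum_{i\le\tau_1}\bar R_i>x\}$ is not asymptotically equivalent to $\P\{R_\infty^\ast>x\}$: in the Kesten regime $\E\rho^\alpha=1$ the large values of the perpetuity are produced by long favourable stretches of the environment, and the conditional probability that the branching process survives such a stretch is not close to one, so a survival factor necessarily enters the constant. The paper's proof (following Kesten--Kozlov--Spitzer) is organized around exactly this interplay: one introduces the stopping level $\sigma(A)$, the first block index by which the process exceeds a large level $A$, shows via Lemma \ref{lem:p3} and Lemma \ref{lem:new} (the latter is where the Kesten--Grincevi\v{c}ius--Goldie tail of the perpetuity $\wt R_k=\xi_k+\rho_k\xi_{k+1}+\rho_k\rho_{k+1}\xi_{k+2}+\cdots$ is used) that the contribution before $\sigma$, the event $\{\sigma\ge\tau_1\}$, and the progeny of immigrants arriving after $S_\sigma$ are all negligible, and then extracts the tail from $\mathbb{S}_\sigma{\bf 1}_{\{\sigma<\tau_1\}}\approx\Z_\sigma\wt R_{\sigma+1}{\bf 1}_{\{\sigma<\tau_1\}}$, i.e.\ the realized population at the stopping level multiplied by the expected future progeny per particle, along the lines of Lemmas 4 and 6 in \cite{kesten1975limit}. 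Your outline stops exactly where that argument begins.
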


Lemma \ref{lem:1} points out the tail behavior of $\bar
\W_{\tau_1}$ in the situation where the slowly varying factor in
$(\xi 2)$ is a constant.
\begin{lem}\label{lem:1}
Assume that $(\rho 1)$ holds for some $\alpha\in (0,2)$, $(\xi 2)$
holds with $\beta=2\alpha$ and $\ell$ such that
$\lim_{t\to\infty}\ell(t)=C_{\ell}>0$,
$\E\rho^{\alpha+\varepsilon}<\infty$ and $\E\rho^\alpha
\xi^{\alpha+\varepsilon}<\infty$ for some $\varepsilon>0$. Then
$$\P\{\bar \W_{\tau_1} >x \} \sim \left((\E\tau_1)(\E \vartheta^{\alpha})C_{\ell}+C_2(\alpha)\right) x^{-\alpha},\quad x\to\infty,$$
where $C_2(\alpha)$ is the same constant as in Lemma
\ref{prop:main2}.
\end{lem}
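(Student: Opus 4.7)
The plan is to exploit the block-wise decomposition
$$\bar{\W}_{\tau_1}=U+V,\qquad U:=\sum_{i=1}^{\tau_1}\W_i^0,\qquad V:=\sum_{i=1}^{\tau_1}\bigl(\W_i^\downarrow+\Z_i\bigr),$$
obtained by partitioning each block as $\mathcal{P}_{1,i}$ versus $\mathcal{P}_{2,i}\cup\mathcal{P}_{3,i}$. The ``local'' summand $U$ is tailor-made to produce the contribution $(\E\tau_1)(\E\vartheta^\alpha)C_\ell$ of the target constant via Lemma \ref{lem:lu1}, while the ``inherited'' summand $V$ produces $C_2(\alpha)$ via Lemma \ref{prop:main2}; the final step is to argue that the two tails add up.

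For $U$: specialising Lemma \ref{lem:lu1} to $\beta=2\alpha$ and $\ell(t)\to C_\ell$ gives $\P\{\W_1^0>x\}\sim C_\ell\E\vartheta^\alpha\,x^{-\alpha}$. The $(\W_i^0)_{i\in\N}$ are i.i.d., $\tau_1$ does not depend on the future of this sequence in the sense recalled before Lemma \ref{lem:lu1}, and Lemma \ref{lem:nu} supplies $\me e^{\gamma\tau_1}<\infty$ for some $\gamma>0$ (its hypotheses are met: $\E\rho^\alpha=1$ together with nonarithmeticity of $\log\rho$ and strict convexity of $s\mapsto\E\rho^s$ imply $\E\log\rho<0$ and $\E\rho^\varepsilon<\infty$ for small $\varepsilon$, while $\E\xi^\varepsilon<\infty$ is immediate from $(\xi 2)$). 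Corollary~3 of Denisov, Foss and Korshunov then delivers $\P\{U>x\}\sim(\E\tau_1)(\E\vartheta^\alpha)C_\ell\,x^{-\alpha}$. For $V$: the three hypotheses of Lemma \ref{prop:main2} are $(\rho 1)$, $\E(\rho\xi)^\alpha<\infty$ and $\E\xi^{3\alpha/2}<\infty$, and all three are available---the last one because $(\xi 2)$ with $\beta=2\alpha$ forces $\E\xi^\kappa<\infty$ for every $\kappa<2\alpha$. The lemma immediately gives $\P\{V>x\}\sim C_2(\alpha)\,x^{-\alpha}$.

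To add the two asymptotics, I would use the elementary inclusions
$$\{U>x\}\cup\{V>x\}\;\subset\;\{U+V>x\}\;\subset\;\{U>(1-\delta)x\}\cup\{V>(1-\delta)x\}\cup\bigl(\{U>\delta x\}\cap\{V>\delta x\}\bigr)$$
valid for every $\delta\in(0,1/2)$, then let $x\to\infty$ followed by $\delta\downarrow 0$. The only non-routine ingredient, and the main obstacle, is the joint-tail bound $\P\{U>\delta x,\,V>\delta x\}=o(x^{-\alpha})$ as $x\to\infty$. Heuristically this holds because the two heavy-tailed mechanisms are disjoint: $U$ is large only when some single $\xi_i$ with $i\le\tau_1$ is of order $x^{1/2}$ (this is the ``one big jump'' behind Lemma \ref{lem:lu1}), while $V$ is large only when some partial product $\rho_1\cdots\rho_k$ is of order $x$ (the Kesten--Grincevi\v{c}ius--Goldie mechanism attached to the recursion $\bar R_k=\rho_k\xi_k+\rho_k\bar R_{k-1}$ of Section \ref{sec:rde}), and these two events are asymptotically independent. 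To make this rigorous I would first truncate $\tau_1\le c\log x$, which is harmless by the exponential moment of $\tau_1$; then decompose $\{U>\delta x,\tau_1\le c\log x\}$ according to the index $j$ in which $\max_{i\le\tau_1}\W_i^0$ is attained and condition on the value of $\xi_j$; and finally, on each such event, estimate $\P\{V>\delta x\mid\xi_j\}$ by the Kesten tail for $V$ (whose leading constant is insensitive to a single $\xi_j$, since the $(\rho_k)$ are independent of $\xi_j$) combined with the moment estimates of Lemmas \ref{lem:zmom} and \ref{lem:Wnu}. The resulting product bound is of order $x^{-2\alpha}$ up to polylogarithmic factors and hence $o(x^{-\alpha})$, which closes the argument.
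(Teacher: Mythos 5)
Your first half matches the paper exactly: the split $\bar\W_{\tau_1}=U+V$ with $U=\sum_{i=1}^{\tau_1}\W_i^0$ and $V=\sum_{i=1}^{\tau_1}(\Z_i+\W_i^\downarrow)$, the tail of $U$ via Lemma \ref{lem:lu1} plus Corollary 3 of Denisov--Foss--Korshunov (with the exponential moment of $\tau_1$ from Lemma \ref{lem:nu}), the tail of $V$ via Lemma \ref{prop:main2} (and indeed $\E(\rho\xi)^\alpha\le\E\rho^\alpha\xi^{\alpha+\varepsilon}<\infty$ since $\xi\ge 1$), and the reduction of the addition of tails to the joint-tail estimate $\P\{U>\delta x,\,V>\delta x\}=o(x^{-\alpha})$, which the paper handles by citing Lemma B.6.1 of \cite{buraczewski2016stochastic} in the equivalent form \eqref{eq:pt4}. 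The problem is that this joint-tail estimate is not a routine appendix to the argument: it is the actual content of the paper's proof, and your sketch of it rests on a claim that is false as stated. You propose to bound $\P\{V>\delta x\mid\xi_j\}$ ``by the Kesten tail for $V$, whose leading constant is insensitive to a single $\xi_j$, since the $(\rho_k)$ are independent of $\xi_j$''. But $V$ is not a function of the $\rho$'s alone: conditionally on the environment, $\E_\omega[\W_j^\downarrow\mid\Z_{j-1}]=\Z_{j-1}(\xi_j-1)$, $\E_\omega\Z_j=\rho_j\xi_j+\rho_j\E_\omega\Z_{j-1}$, and the total progeny of the $\Z_j$ particles is of order $\Z_j\widetilde R_{j+1}$, so $V$ contains several terms that grow linearly in $\xi_j$. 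On the event $\xi_j\asymp x^{1/2}$ that makes $U$ large, the conditional probability of $\{V>\delta x\}$ is therefore of order $x^{-\alpha/2}$ (one still needs $\rho_j\widetilde R_{j+1}$ or $\Z_{j-1}$ to be of order $x^{1/2}$), not of order $x^{-\alpha}$; your announced bound ``$x^{-2\alpha}$ up to polylogarithmic factors'' is wrong, and, more importantly, no argument is given for the bound that does hold.

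Making this rigorous is exactly what occupies the bulk of the paper's proof: after the truncation $\tau_1\le C_1\log x$ (your step \eqref{eq:pt11} is fine), one must (i) show that $\W_k^0$ large forces $\xi_k^2>x^{1-\delta}$ (\eqref{eq:pt12}, via Lemma \ref{lem:exp_moments}); (ii) dispose of the pre-$k$ part of $V$ and of the event $\Z_{k-1}>x^{2\delta}$ by independence of $\xi_k$ from the past (\eqref{eq:pt14}, \eqref{eq:pt13}, using Lemma \ref{lem:zmom}); (iii) show that the block-$k$ contribution $\Z_k+\W_k^\downarrow$ cannot exceed $x/4$ when $\xi_k^2>x^{1-\delta}$ but $\Z_{k-1}\le x^{2\delta}$ (\eqref{eq:pt16}, a delicate Markov/H\"older estimate with a careful choice of $r$ and $\delta$, needing $\E\rho^\gamma,\E\xi^\gamma<\infty$ for some $\gamma\in(\alpha,2\alpha)$); and (iv) control the post-$k$ contribution via Lemma \ref{lem:new} and the conditional second-moment bound \eqref{eq:aux1} (\eqref{eq:pt17}). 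Steps (iii) and (iv) are precisely where the hypotheses $\E\rho^{\alpha+\varepsilon}<\infty$ and $\E\rho^\alpha\xi^{\alpha+\varepsilon}<\infty$ enter; your sketch never uses them, which is a clear sign that the dependence between $U$ and $V$ through the shared $\xi_k$'s has not been confronted. So the proposal identifies the right decomposition and the right target, but the decisive step is a heuristic whose justification, as written, would not survive being made precise.
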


The proofs of Lemmas \ref{lem:zmom} through \ref{lem:1} are
postponed until Section \ref{sec:proofs_lemmas}.

For the ease of reference the tail behavior of $\bar \W_{\tau_1}$
is summarized in the following proposition.
\begin{prop}\label{prop:tail_main}
The following asymptotic relations hold.
\begin{itemize}
\item[{\rm (C1)}] If $(\rho 1)$ holds for some $\alpha\in(0,2]$, either
$\E\xi^{2\alpha}<\infty$ or $(\xi 2)$ holds with $\beta=2\alpha$,
$\lim_{t\to\infty}\ell(t)=0$, and $\E(\rho\xi)^\alpha <\infty$,
then $$\P\{\bar \W_{\tau_1}>x\}\sim C_2(\alpha)x^{-\alpha},\quad
x\to\infty,$$ where $C_2(\alpha)$ is the same constant as in Lemma
\ref{prop:main2}.
\item[{\rm (C2)}] If $(\rho 1)$ holds for some $\alpha\in(0,2)$, $(\xi 2)$ holds with $\beta=2\alpha$ and
$\lim_{t\to\infty}\ell(t)=C_{\ell}\in(0,\infty)$,
$\E\rho^{\alpha+\varepsilon}<\infty$ and $\E\rho^\alpha
\xi^{\alpha+\varepsilon}<\infty$ for some $\varepsilon>0$, then
$$\P\{\bar \W_{\tau_1}>x\}\sim \left((\E\tau_1)(\E \vartheta^{\alpha})C_{\ell}+C_2(\alpha)\right) x^{-\alpha},\quad
x\to\infty.$$
\item[{\rm (C3)}] If $(\rho 1)$ holds for some $\alpha\in(0,2]$, $(\xi 2)$ holds with $\beta=2\alpha$ and
$\lim_{t\to\infty}\ell(t)=\infty$, and
$\E(\rho\xi)^\alpha<\infty$, then
$$\P\{\bar \W_{\tau_1}>x\}\sim (\E\tau_1)(\E \vartheta^{\alpha})x^{-\alpha}\ell(x^{1/2}),\quad x\to\infty.$$
\item[{\rm (C4)}] If $(\rho 2)$ holds, $(\xi 2)$ holds for some $\beta\in(0,4)$ such that $\beta/2\in\mathcal{I}$ and
$\E (\rho\xi)^{\beta/2+\varepsilon}<\infty$ for some
$\varepsilon>0$, then $$ \P\{\bar \W_{\tau_1}>x\}\sim
(\E\tau_1)(\E \vartheta^{\beta/2})x^{-\beta/2}\ell(x^{1/2}),\quad
x\to\infty.$$
\end{itemize}
\end{prop}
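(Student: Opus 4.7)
The natural strategy is to work with the decomposition
$$\bar\W_{\tau_1} = \sum_{i=1}^{\tau_1}\W^0_i + \sum_{i=1}^{\tau_1}\W^\downarrow_i + \sum_{i=1}^{\tau_1}\Z_i$$
from Section \ref{sec:strategy}, handling the first sum separately and grouping the latter two under the umbrella of Lemma \ref{prop:main2}. For the first sum I would invoke Corollary~3 of~\cite{denisov:foss:korshunov}: the summands $(\W^0_i)_{i\in\N}$ are i.i.d., $\tau_1$ does not depend on their future by the construction of Section \ref{sec:strategy}, and Lemma \ref{lem:nu} supplies exponential moments for $\tau_1$ in all four cases, since $\E\rho^\varepsilon<\infty$ and $\E\xi^\varepsilon<\infty$ follow from either $(\rho 1)$ or $(\rho 2)$ combined with the moment or regular-variation hypotheses on $\xi$. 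Whenever $(\xi 2)$ is in force with $\beta=2\alpha$, Lemma \ref{lem:lu1} gives a regularly varying single-term tail, and the Denisov--Foss--Korshunov result multiplies it by $\E\tau_1$, yielding
$$\P\Bigl\{\sum_{i=1}^{\tau_1}\W^0_i > x\Bigr\} \sim (\E\tau_1)\,\E\vartheta^{\alpha}\,x^{-\alpha}\,\ell(x^{1/2}), \quad x\to\infty.$$

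For the complementary group, Lemma \ref{prop:main2} supplies the exact tail $C_2(\alpha)x^{-\alpha}$ under $(\rho 1)$ provided $\E\xi^{3\alpha/2}<\infty$; this is immediate in (C1) (from $\E\xi^{2\alpha}<\infty$, or from $(\xi 2)$ with index $2\alpha>3\alpha/2$) and in (C2)--(C3) (from regular variation). In (C4), where $(\rho 2)$ replaces $(\rho 1)$, I would instead apply Lemma \ref{lem:Wnu} with $\kappa = \beta/2 + \varepsilon \in \mathcal{I}$ and $\kappa_0 = \beta/2 + \varepsilon/2$: the hypotheses of (C4) then guarantee $\E\rho^\kappa<1$, $\E(\rho\xi)^\kappa<\infty$, and $\E\xi^\kappa<\infty$ (by regular variation of $\xi$ with $\kappa<\beta$), so that Lemma \ref{lem:Wnu} delivers $\E(\sum_{i=1}^{\tau_1}(\Z_i+\W^\downarrow_i))^{\kappa_0}<\infty$, and Markov's inequality gives the tail bound $O(x^{-\beta/2-\varepsilon/2})=o(x^{-\beta/2}\ell(x^{1/2}))$.

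Combining the two contributions then yields each of (C1)--(C4). In (C3) the factor $\ell(x^{1/2})\to\infty$ makes the $\W^0$ tail swamp the $x^{-\alpha}$ contribution of the second group; in (C4) the moment bound just derived does the same; case (C2) is precisely Lemma \ref{lem:1}. In (C1) with $(\xi 2)$ and $\lim\ell=0$, Lemma \ref{lem:lu1} directly gives $\P\{\W^0_1>x\}=o(x^{-\alpha})$, which propagates to the random sum and leaves only $C_2(\alpha)x^{-\alpha}$ from Lemma \ref{prop:main2}.

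The principal technical obstacle is the remaining sub-case of (C1), namely $\E\xi^{2\alpha}<\infty$ without any regular-variation hypothesis on $\xi$. Here Lemma \ref{lem:zmom} with $\kappa=\alpha$ furnishes only $\E(\W^0_1)^\alpha<\infty$, which via Markov yields a tail of order $O(x^{-\alpha})$, whereas the desired conclusion demands $o(x^{-\alpha})$ in order to safely discard the $\W^0$ piece. To close this gap I would exploit the representation of $\W^0_1$ (already used in the proof of Lemma \ref{lem:lu1}) as the total progeny of a near-critical Galton--Watson process on a random horizon of order $\xi$, and upgrade the moment bound to the uniform-integrability statement $x^\alpha\P\{\W^0_1>x\}\to 0$ under $\E\xi^{2\alpha}<\infty$ and $\E(\rho\xi)^\alpha<\infty$; combined with the exponential bound on $\tau_1$, a standard truncation then gives $\P\{\sum_{i=1}^{\tau_1}\W^0_i>x\}=o(x^{-\alpha})$ and completes (C1).
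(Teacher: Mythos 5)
Your handling of (C2), (C3), (C4) and of the $(\xi 2)$-with-$\lim\ell=0$ sub-case of (C1) follows essentially the paper's own route: Lemma \ref{prop:main2} (respectively Lemma \ref{lem:Wnu} with $\kappa=\beta/2+\varepsilon$ under $(\rho 2)$, where you should also note the extra hypothesis $\E\xi^{3\kappa/2}<\infty$, satisfied for small $\varepsilon$ because $3\beta/4<\beta$) for $\sum_{k=1}^{\tau_1}(\Z_k+\W_k^\downarrow)$, Lemma \ref{lem:lu1} combined with Corollary 3 of \cite{denisov:foss:korshunov} for $\sum_{k=1}^{\tau_1}\W^0_k$, and Lemma \ref{lem:1} for (C2).

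The problem lies exactly where you flag a ``principal technical obstacle'' in (C1), and it is twofold. First, the obstacle you describe at the single-summand level is not there: once $\E[\W^0_1]^\alpha<\infty$ (which does follow under $\E\xi^{2\alpha}<\infty$; the paper gets it from Lemma \ref{lem:mom}), one automatically has $x^\alpha\P\{\W^0_1>x\}\le \E\big[(\W^0_1)^\alpha{\bf 1}_{\{\W^0_1>x\}}\big]\to 0$, i.e.\ the tail is $o(x^{-\alpha})$ and not merely $O(x^{-\alpha})$; no uniform-integrability ``upgrade'' is needed. Second, what genuinely requires an argument is passing from the single term to the random sum $\sum_{i=1}^{\tau_1}\W^0_i$, and your sketched fix does not deliver it: truncating at $\tau_1\le C_1\log x$ and using a union bound gives at best $C_1\log x\cdot\P\{\W^0_1>x/(C_1\log x)\}=o\big((\log x)^{1+\alpha}x^{-\alpha}\big)$, which is not $o(x^{-\alpha})$, and without regular variation of the tail of $\W^0_1$ you cannot appeal to the randomly stopped sum asymptotics of \cite{denisov:foss:korshunov} either. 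The paper closes this step differently and cleanly: since $\tau_1$ has a finite exponential moment (Lemma \ref{lem:nu}) and does not depend on the future of the iid sequence $(\W^0_i)_{i\in\N}$, Lemma \ref{lem:future} in the Appendix yields $\E\big[\sum_{i=1}^{\tau_1}\W^0_i\big]^\alpha<\infty$, and then the same elementary observation as above gives $\P\big\{\sum_{i=1}^{\tau_1}\W^0_i>x\big\}=o(x^{-\alpha})$, reducing (C1) to Lemma \ref{prop:main2}. You should replace your final paragraph by such a moment bound for the random sum (or an equivalent argument); as written, that step is a gap.
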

\begin{proof}
Under the assumptions ${\rm (Ci)}$, $i=1,2,3,4$ $\tau_1$ has some
finite exponential moment by Lemma \ref{lem:nu}. This fact will be
used when applying Corollary 3 of \cite{denisov:foss:korshunov}
below.

\noindent {\sc Proof of (C1).} Each of $\E\xi^{2\alpha}<\infty$
and $(\xi 2)$ with $\beta=2\alpha$ implies $\E
\xi^{3\alpha/2}<\infty$. Therefore, in view of Lemma
\ref{prop:main2} it is enough to show that
\begin{equation}\label{eq:cor_main1}
\P\bigg\{\sum_{i=1}^{\tau_1}\W_i^0>x\bigg\}=o(x^{-\alpha}),\quad x\to\infty.
\end{equation}
Corollary 3 in \cite{denisov:foss:korshunov} ensures that
\begin{equation}\label{asymp}
\P\bigg\{\sum_{i=1}^{\tau_1}\W_i^0>x\bigg\}\sim (\E
\tau_1)\P\{\W_1^0>x\},\quad x\to\infty
\end{equation}
whenever the right tail of $\W_1^0$ is regularly varying.

If $(\xi 2)$ holds with $\beta=2\alpha$, then according to Lemma
\ref{lem:lu1}
$$\P\{\W_1^0>x\} ~\sim~  \E\vartheta^\alpha
x^{-\alpha}\ell(x^{1/2}),\quad x\to\infty.$$ This in combination
with $\lim_{t\to\infty}\ell(t)=0$ which holds by assumption and
\eqref{asymp} proves \eqref{eq:cor_main1}.

Assuming that $\E\xi^{2\alpha}<\infty$ we intend to show that
\begin{equation}\label{mom1}
\E \Big[\sum_{i=1}^{\tau_1}\W_i^0\Big]^\alpha<\infty
\end{equation}
which, of course, entails \eqref{eq:cor_main1}. By Lemma
\ref{lem:future}, \eqref{mom1} holds provided that $\E
[\W_1^0]^\alpha <\infty$. The latter is secured by
$\E\xi^{2\alpha}<\infty$ and Lemma \ref{lem:mom}.

\noindent {\sc Proof of (C2).} This is just Lemma \ref{lem:1}.

\noindent {\sc Proof of (C3).} This follows from Lemma
\ref{lem:lu1} in conjunction with Corollary 3 in
\cite{denisov:foss:korshunov} and Lemma \ref{prop:main2} because
$(\xi 2)$ with $\beta=2\alpha$ entails $\E\xi^{3\alpha/2}<\infty$.

\noindent {\sc Proof of (C4)}. Since the interval $\mathcal{I}$ is
open, there exists $\varepsilon_1>0$ such that $\beta/2+\varepsilon_1\in (0,2]$,
$\E\rho^{\beta/2+\varepsilon_1}<1$,
$\E\xi^{3\beta/4+3\varepsilon_1/2}<\infty$ and
$\E(\rho\xi)^{\beta/2+\varepsilon_1}<\infty$. In view of this
Lemma \ref{lem:Wnu} applies with $\kappa=\beta/2+\varepsilon_1$
and $\kappa_0=\beta/2+\varepsilon_1/2$ which gives $\E  \big(\sum_{i=1}^{\tau_1}\Z_i  \big)^{\beta/2+\varepsilon_1/2} <
\infty$ and $\E\big(\sum_{i=1}^{\tau_1}\W_i^\downarrow\big)^{\beta/2+\varepsilon_1/2}<\infty$. An appeal to Lemma \ref{lem:lu1} in
combination with Corollary 3 in \cite{denisov:foss:korshunov} does the rest.
\end{proof}

\section{Critical Galton--Watson process with immigration}\label{sec:GW}

As has already been mentioned in Section~\ref{sec:bpre},
$(Z_n)_{0\leq n\leq \xi_1-1} \od (\Zc_n)_{0\leq n\leq \xi_1-1}$,
where $\xi_1$ is assumed independent of $(\Zc_n)_{n\in\N_0}$ a
critical Galton--Watson process with unit immigration and ${\rm
Geom}(1/2)$ offspring  distribution. In this section we collect
some known properties of $(\Zc_n)_{n\in\N_0}$ and prove several
auxiliary results which to our knowledge are not available in the
literature. The evolution of $(\Zc_n)_{n\in\N_0}$ is the same as
that of the BRPE $Z$ with $\omega_n\equiv 1/2$ for all $n\in\N$,
see Section \ref{sec:bpre1}.

For $n\in\N$, let $\Wc_n:=\sum_{k=1}^n \Zc_k$ denote the total
progeny in the first $n$ generations. Further, for $n\in\N$ and
$1\leq k\leq n$, write $\Zc(k,n)$ for the number of the $n$th
generation progeny of the $k$th immigrant and $\Yc(k,n)$ for the
number of progeny of the $k$th immigrant which reside in
generations $k$ through $n$, that is,
$$\Yc(k,n) =\sum_{j=k}^n \Zc(k,j).$$

Here is the main result of this section of which Lemma
\ref{lem:lu1} is an immediate consequence because $\W_1^0\od
\Wc_{\xi_1-1}$, where $\xi_1$ is assumed independent of
$(\Wc_k)_{k\in\N}$.
\begin{prop}\label{prop:GW}
Let $\varsigma$ be an integer-valued random variable independent
of $(\Wc_n)_{n\in\N_0}$ and such that $$ \P\{\varsigma>x\}\sim
x^{-2\alpha}\ell(x),\quad x\to\infty$$ for some $\alpha>0$ and
some $\ell$ slowly varying at $\infty$. Then $$
\P\{\Wc_\varsigma>x\}~\sim~
\E\vartheta^{\alpha}\P\{\varsigma>x^{1/2}\} ~\sim~
\E\vartheta^{\alpha} x^{-\alpha}\ell(x^{1/2}),\quad x\to\infty,$$
where $\vartheta$ is a random variable with Laplace transform
\eqref{eq:lt_limit}.
\end{prop}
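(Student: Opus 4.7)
The plan is to combine two ingredients: the scaling limit $\Wc_n/n^2 \dod \vartheta$ with $\vartheta$ having Laplace transform $1/\cosh(s^{1/2})$, and a Breiman-type dissection that converts this, via the regular variation of $\varsigma$, into the claimed tail asymptotics.

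First I would establish the scaling limit. The natural route is a functional convergence theorem: since $\Zc$ is a critical Galton--Watson process with $\mathrm{Geom}(1/2)$ offspring and unit immigration, the rescaled process $(\Zc_{\lfloor nt\rfloor}/n)_{t\geq 0}$ converges in the Skorokhod topology to the continuous-state branching process with immigration $(X_t)_{t\geq 0}$ solving $dX_t = \sqrt{2X_t}\,dB_t + dt$, $X_0=0$ (equivalently, $2X$ is a squared Bessel process of dimension $2$). Continuous mapping then gives $\Wc_n/n^2 = n^{-1}\sum_{k=1}^n \Zc_k/n \dod \int_0^1 X_s\,ds$, and the Laplace transform of this functional equals $1/\cosh(\sqrt{s})$ by the classical formula for integrated squared Bessel processes. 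I would also record the uniform moment bound $\sup_{n\geq 1}\E(\Wc_n/n^2)^p<\infty$ for every $p>0$, easy from the explicit marginal distribution $\Zc_n\sim \mathrm{Geom}(1/(n+1))$ combined with Minkowski's inequality; this delivers in particular $\P\{\Wc_n>x\}\leq C_p n^{2p}/x^p$ for all $n,x$.

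The main step is a three-region dissection. Writing $F(y):=\P\{\varsigma>y\}\sim y^{-2\alpha}\ell(y)$ and choosing $0<\delta<K$, split
\[
\P\{\Wc_\varsigma>x\} = I_1(\delta,x) + I_2(\delta,K,x) + I_3(K,x),
\]
where the three summands correspond to the ranges $n\leq \delta x^{1/2}$, $\delta x^{1/2}<n\leq Kx^{1/2}$ and $n>Kx^{1/2}$. For $I_1$, combining the moment bound with any $p>\alpha$ and Karamata's theorem for truncated moments, $\E[\varsigma^{2p}\mathbf{1}_{\{\varsigma\leq y\}}]\sim \frac{p}{p-\alpha}y^{2p}F(y)$, yields $I_1\leq C\delta^{2(p-\alpha)}F(x^{1/2})$ for large $x$. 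For $I_3$, bounding $\P\{\Wc_n>x\}\leq 1$ and using regular variation gives $I_3\leq F(Kx^{1/2})\sim K^{-2\alpha}F(x^{1/2})$. For the essential middle term $I_2$, observe that $t\mapsto f_x(t):=\P\{\Wc_{\lfloor tx^{1/2}\rfloor}>x\}$ is nondecreasing (since $n\mapsto \Wc_n$ is a.s.\ nondecreasing) and, by the scaling limit, converges pointwise to the continuous function $f(t):=\P\{\vartheta>1/t^2\}$ (continuity of $f$ follows from absolute continuity of $\vartheta$, which is clear from its Laplace transform); Polya's theorem upgrades this to uniform convergence on $[\delta,K]$. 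Regular variation of $\varsigma$ also yields the vague convergence on $(0,\infty)$ of $\nu_x:=F(x^{1/2})^{-1}\P\{\varsigma/x^{1/2}\in\cdot\}$ to $\nu(dt)=2\alpha t^{-2\alpha-1}\,dt$, so
\[
\frac{I_2}{F(x^{1/2})} = \int_\delta^K f_x(t)\,d\nu_x(t)\ \longrightarrow\ \int_\delta^K \P\{\vartheta>1/t^2\}\,2\alpha t^{-2\alpha-1}\,dt
\]
as $x\to\infty$. Sending $\delta\to 0$ and $K\to\infty$, the right-hand side extends to $(0,\infty)$; the substitution $u=1/t^2$ then transforms it into $\alpha\int_0^\infty \P\{\vartheta>u\}u^{\alpha-1}\,du = \E\vartheta^\alpha$, completing the proof.

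The main obstacle is the scaling limit step, and specifically the identification of the limit's Laplace transform as $1/\cosh(\sqrt{s})$; the rest of the argument is a routine Breiman-style dissection. Lemma \ref{lem:lu1} is then an immediate corollary on choosing $\varsigma=\xi_1-1$ (independent of $\Wc$) with $\beta=2\alpha$.
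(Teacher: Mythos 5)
Your proposal is correct, but it follows a genuinely different route from the paper. The paper quotes Pakes' theorem for the scaling limit \eqref{distr_conv} and then argues through the first--passage times $\upsilon_x=\inf\{k:\Wc_k>x\}$: writing $\P\{\Wc_\varsigma>x\}=\E h(\upsilon_x)$ with $h(y)=\P\{\varsigma\ge y\}$, it inverts \eqref{distr_conv} \`a la Feller to get $\upsilon_x/x^{1/2}\dod\vartheta^{-1/2}$, deduces $h(\upsilon_x)/h(x^{1/2})\dod\vartheta^{\alpha}$, and finishes by checking uniform integrability of this ratio family via Potter bounds, negative moments of $\upsilon_x$ and an exponential bound on $\P\{\upsilon_x\le n_1\}$, all resting on the uniform exponential moment bound of Lemma \ref{lem:exp_moments}. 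You instead dissect $\P\{\Wc_\varsigma>x\}=\sum_n\P\{\varsigma=n\}\P\{\Wc_n>x\}$ over the three zones $n\le\delta x^{1/2}$, $\delta x^{1/2}<n\le Kx^{1/2}$, $n>Kx^{1/2}$, handling the edges by Markov--Karamata and by the trivial bound, and the middle zone by combining the scaling limit with vague convergence of the normalized law of $\varsigma/x^{1/2}$; this avoids the inversion and the uniform-integrability check altogether, at the cost of invoking heavier machinery for the scaling limit itself (a Kawazu--Watanabe/Wei--Winnicki type functional limit theorem for Galton--Watson processes with immigration plus the integrated squared Bessel Laplace transform, where the paper simply cites Pakes -- you should cite a specific such theorem, or just quote Pakes as the paper does). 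Your observation that the exact marginal $\Zc_n\sim{\rm Geom}(1/(n+1))$ (by telescoping the iterated generating functions) yields $\sup_n\E(n^{-2}\Wc_n)^p<\infty$ via Minkowski is a nice elementary substitute for the recursive exponential-moment estimate of Lemma \ref{lem:exp_moments}, and it is all the Proposition needs (the paper's stronger lemma is reused elsewhere, e.g.\ in Lemma \ref{lem:mom}). Two small points to button up: the appeal to Polya's theorem needs continuity of the distribution function of $\vartheta$, which you assert rather than prove -- either justify it (e.g.\ $1/\cosh(\sqrt s)=2\sum_{k\ge0}(-1)^k e^{-(2k+1)\sqrt s}$ exhibits the law as an alternating series of scaled positive $1/2$-stable laws, hence absolutely continuous) or note that pointwise convergence off the at most countably many atoms suffices since the limit measure $2\alpha t^{-2\alpha-1}{\rm d}t$ is absolutely continuous; and the Karamata constant in your $I_1$ bound should be $\alpha/(p-\alpha)$ rather than $p/(p-\alpha)$, which is immaterial for the estimate.
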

\begin{rem}
For fixed $n\in\N$, $\E{\Wc_n} = \frac{n(n+1)}{2}$ and the
distribution of $\Wc_n$ inherits an exponential tail from ${\rm
Geom}(1/2)$ offspring distribution. Thus, for $\varsigma$ which
has distribution with a heavy tail and is independent of
$(\Wc_n)_{n\in\N}$ it is natural to expect that
$$\Wc_\varsigma \approx {\rm const}\cdot \varsigma^2.$$ Proposition
\ref{prop:GW} makes this intuition precise.
\end{rem}
Lemma \ref{lem:mom} given next is used in the proof of Proposition
\ref{prop:tail_main}, part ${\rm (C1)}$.
\begin{lem}\label{lem:mom}
Let $\varsigma$ be an integer-valued random variable independent
of $(\Wc_n)_{n\in\N_0}$ and such that $\E
\varsigma^{2\alpha}<\infty$ for some $\alpha>0$. Then $\E
[\Wc_\varsigma]^\alpha<\infty$.
\end{lem}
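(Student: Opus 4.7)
The plan is to reduce the assertion to the deterministic moment bound
\[
\E \Wc_n^\alpha \;\leq\; C(\alpha)\,n^{2\alpha}, \qquad n\in\mathbb{N},
\]
for some finite $C(\alpha)$. Once this is in hand, the independence of $\varsigma$ and $(\Wc_n)_{n\in\N_0}$ together with the hypothesis $\E \varsigma^{2\alpha}<\infty$ give at once
\[
\E[\Wc_\varsigma]^\alpha \;=\; \sum_{n\geq 0}\P\{\varsigma=n\}\,\E\Wc_n^\alpha \;\leq\; C(\alpha)\,\E\varsigma^{2\alpha} \;<\; \infty,
\]
which is the assertion of the lemma.

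The bound itself rests on the explicit identification of the one-dimensional marginals of $(\Zc_n)_{n\in\N_0}$. Writing $g_n(s):=\E s^{\Zc_n}$ and $f(s):=1/(2-s)$ for the pgf of ${\rm Geom}(1/2)$, the branching recursion $\Zc_{n+1}=\sum_{k=0}^{\Zc_n} G_k^{(n)}$ with $G_k^{(n)}$ iid ${\rm Geom}(1/2)$ and independent of $\Zc_n$ (cf.\ Section \ref{sec:bpre1}) gives $g_{n+1}(s)=f(s)\,g_n(f(s))$. Starting from $g_0\equiv 1$, a straightforward induction yields
\[
g_n(s) \;=\; \frac{1}{(n+1)-ns}, \qquad n\in\N_0,
\]
which is precisely the pgf of the ${\rm Geom}\bigl(1/(n+1)\bigr)$ distribution. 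The elementary tail estimate $\P\{X\geq x\}\leq(1-p)^x$ for $X\sim{\rm Geom}(p)$ together with $|\log(1-p)|\geq p$ then gives $\E \Zc_n^\alpha \leq \Gamma(\alpha+1)(n+1)^\alpha$ for every $\alpha>0$.

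The passage from $\Zc_n$ to $\Wc_n=\sum_{j=1}^n \Zc_j$ splits into two elementary cases. For $\alpha\in(0,1]$, concavity of $x\mapsto x^\alpha$ together with the mean formula $\E\Wc_n=n(n+1)/2$ recalled just before the lemma yields $\E\Wc_n^\alpha \leq (\E\Wc_n)^\alpha \leq C n^{2\alpha}$. For $\alpha>1$, the power-mean inequality produces $\Wc_n^\alpha \leq n^{\alpha-1}\sum_{j=1}^n \Zc_j^\alpha$, whence
\[
\E\Wc_n^\alpha \;\leq\; n^{\alpha-1}\sum_{j=1}^n \E\Zc_j^\alpha \;\leq\; C(\alpha)\,n^{\alpha-1}\sum_{j=1}^n (j+1)^\alpha \;\leq\; C(\alpha)\,n^{2\alpha}.
\]

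I do not anticipate a serious obstacle: the explicit identification $\Zc_n\sim{\rm Geom}\bigl(1/(n+1)\bigr)$ reduces the problem to routine moment bookkeeping, and the mild case split at $\alpha=1$ handles the only subtlety. The one potentially non-mechanical step is verifying the closed-form induction for $g_n(s)$, but this is a one-line rational-function manipulation.
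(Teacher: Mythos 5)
Your argument is correct, but it proceeds by a genuinely different route than the paper. The paper disposes of the lemma in two lines by citing Lemma \ref{lem:exp_moments}, which gives $\sup_{n}\E\big[(n^{-2}\Wc_n)^{\alpha}\big]<\infty$ as a consequence of the uniform exponential-moment bound $\sup_{n\ge n_0}\E\exp(\beta n^{-2}\Wc_n)<\infty$, and then performs exactly the same conditioning-on-$\varsigma$ step that you do. You instead re-derive the needed polynomial bound $\E\Wc_n^{\alpha}\le C(\alpha)n^{2\alpha}$ from scratch: the pgf induction $g_{n+1}(s)=f(s)g_n(f(s))$ with $f(s)=1/(2-s)$ does give $g_n(s)=1/\big((n+1)-ns\big)$, i.e.\ $\Zc_n\sim{\rm Geom}\big(1/(n+1)\big)$ (this checks out: $\tfrac{1}{2-s}\cdot\tfrac{1}{(n+1)-n/(2-s)}=\tfrac{1}{(n+2)-(n+1)s}$), the tail estimate then yields $\E\Zc_n^{\alpha}\le\Gamma(\alpha+1)(n+1)^{\alpha}$, and the split into $\alpha\le 1$ (Jensen plus $\E\Wc_n=n(n+1)/2$) and $\alpha>1$ (power-mean inequality $\Wc_n^{\alpha}\le n^{\alpha-1}\sum_{j\le n}\Zc_j^{\alpha}$) is sound. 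What each approach buys: within the paper, Lemma \ref{lem:exp_moments} is needed anyway (for Proposition \ref{prop:GW}, in particular for the uniform integrability argument and the estimate \eqref{ineq2}), so invoking it here costs nothing and the proof of Lemma \ref{lem:mom} collapses to the observation $\E\Wc_n^{\alpha}=O(n^{2\alpha})$; your argument is self-contained, more elementary, and gives an explicit constant together with the pleasant identification of the law of $\Zc_n$, but it only produces polynomial moments and so could not replace Lemma \ref{lem:exp_moments} in its other uses. The only cosmetic caveat is that your final display implicitly uses $\varsigma\ge 0$ (which is the case in the paper's application, $\varsigma=\xi_1-1$) and that the bound for $n=0$ is trivial since $\Wc_0=0$.
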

To prove Proposition \ref{prop:GW} and Lemma \ref{lem:mom} we need
some auxiliary lemmas. The first one is due to Pakes~\cite[Theorem
5]{pakes1972critical}.
\begin{lem}
We have
\begin{equation}\label{distr_conv}
n^{-2} \Wc_n~\dod~ \vartheta,\quad n\to\infty,
\end{equation}
where $\vartheta$ is a random variable with Laplace transform \eqref{eq:lt_limit}.
\end{lem}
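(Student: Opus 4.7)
The plan is to compute the Laplace transform of $\Wc_n/n^2$ directly and to verify convergence to $1/\cosh(\sqrt{s})$. Let $L(s) = (2-e^{-s})^{-1}$ be the Laplace transform of Geom$(1/2)$ and $\chi(s) := -\log L(s) = \log(2-e^{-s})$. Because the $(n-1)$st immigrant reproduces alongside the $\Zc_{n-1}$ particles of generation $n-1$ to produce $\Zc_n$, the branching property yields the joint recursion
\[
\phi_n(u,v) := \E\exp(-u\Zc_n - v\Wc_n) = e^{-\chi(u+v)}\,\phi_{n-1}\bigl(\chi(u+v),\,v\bigr).
\]
Iterating from the initial condition $\phi_0\equiv 1$ (since $\Zc_0 = \Wc_0 = 0$) one obtains $\phi_n(0,v) = \exp\bigl(-\sum_{k=1}^n h_k\bigr)$, where $h_0 = 0$ and $h_k = \chi(h_{k-1} + v)$.

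Next I would set $v = s/n^2$ and rescale. A Taylor expansion gives $\chi(t) = t - t^2 + O(t^3)$ as $t\to 0^+$, so under the ansatz $h_k \approx g(k/n)/n$ the recursion reduces, heuristically, to the Riccati ODE
\[
g'(t) = s - g(t)^2,\qquad g(0)=0,
\]
whose solution is $g(t) = \sqrt{s}\,\tanh(\sqrt{s}\,t)$. A direct integration then gives
\[
\sum_{k=1}^n h_k \;\longrightarrow\; \int_0^1 \sqrt{s}\,\tanh(\sqrt{s}\,t)\,dt = \log\cosh(\sqrt{s}),
\]
so that $\E\exp(-s\Wc_n/n^2)\to 1/\cosh(\sqrt{s})$. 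Since the right-hand side is the Laplace transform of $\vartheta$, L\'evy's continuity theorem delivers $n^{-2}\Wc_n\dod\vartheta$.

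The principal obstacle is the rigorous justification of the heuristic. I would need uniform control showing that $h_k = O(1/n)$ for all $1\le k\le n$, so that the discarded $O(t^3)$ error in the expansion of $\chi$ contributes $o(1)$ after summation, together with a Gronwall-type comparison between the discrete iteration and the continuous ODE; concavity and smoothness of $\chi$ near $0$ make these estimates routine but not instantaneous. A more conceptual alternative is to invoke the Lamperti--Feller diffusion approximation: $\Zc_{\lfloor nt\rfloor}/n$ converges weakly to $X_t$ satisfying $\ud X_t = \sqrt{2X_t}\,\ud B_t + \ud t$ (a squared Bessel process of dimension two, after a trivial rescaling), and by continuous mapping $n^{-2}\Wc_n \Rightarrow \int_0^1 X_t\,\ud t$. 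The classical formula for the Laplace transform of the integral functional of a squared Bessel process of dimension two started at zero then gives precisely $1/\cosh(\sqrt{s})$, providing an independent check. Since the statement is attributed to Pakes' 1972 paper, either route amounts to reconstructing (or citing) his Theorem~5.
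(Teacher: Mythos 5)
Your argument is sound, but it is worth noting that the paper does not prove this lemma at all: it is quoted directly from Pakes (Theorem 5 of \cite{pakes1972critical}), with only the remark that Pakes' standing assumption of a positive probability of no immigration can be dispensed with after a perusal of his proof. So what you propose is a genuinely different, self-contained route. Your joint Laplace-transform recursion $\phi_n(u,v)=e^{-\chi(u+v)}\phi_{n-1}(\chi(u+v),v)$ is correct (it is the Laplace-transform twin of the recursion $a_j(x)=(2-e^{x}a_{j-1}(x))^{-1}$ that the authors themselves use for the exponential moments in Lemma \ref{lem:exp_moments}), and the Riccati limit $g'=s-g^2$, $g(0)=0$, $g(t)=\sqrt{s}\tanh(\sqrt{s}\,t)$, does integrate to $\log\cosh(\sqrt{s})$, giving the stated transform $1/\cosh(\sqrt{s})$. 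Moreover, the ``principal obstacle'' you flag is indeed routine: since $\chi(t)=\log(2-e^{-t})\le t$ for $t\ge 0$, the iteration $h_k=\chi(h_{k-1}+s/n^2)$ with $h_0=0$ immediately gives the uniform bound $h_k\le ks/n^{2}\le s/n$, after which the cubic error in $\chi(t)=t-t^{2}+O(t^{3})$ contributes $O(n\cdot n^{-3})=o(1)$ to the sum and a standard monotone sandwich between the discrete scheme and the ODE closes the argument. Your second route (functional convergence of $\Zc_{\lfloor nt\rfloor}/n$ to the solution of $\ud X_t=\sqrt{2X_t}\,\ud B_t+\ud t$, i.e.\ one half of a ${\rm BESQ}^2$ started at zero, followed by continuous mapping and the classical Laplace transform of $\int_0^1 X_t\,\ud t$) also checks out and reproduces $1/\cosh(\sqrt{s})$. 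What your approach buys is a proof that avoids importing Pakes' result and the attendant discussion of his immigration assumption; what the paper's citation buys is brevity. Either is acceptable, but if you go the direct route you should write out the sandwich estimate rather than leave it as an assertion.
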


In the cited article Pakes investigates Galton--Watson processes
with general, not necessarily unit, immigration. One of the
standing assumptions of that paper is that the probability of
having no immigrants is positive. However, a perusal of the proof
of Theorem 5 in \cite{pakes1972critical} reveals that the result
still holds without this assumption.

With some additional effort one can prove the convergence of all
moments in \eqref{distr_conv}.
\begin{lem}\label{lem:exp_moments}
For each $s>0$,
\begin{equation}\label{conv_mom}
\lim_{n\to\infty}\E(n^{-2}\Wc_n)^s = \E \vartheta^s.
\end{equation}
\end{lem}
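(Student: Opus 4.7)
The plan is to deduce the convergence of moments from the distributional convergence $n^{-2}\Wc_n \dod \vartheta$ of the preceding lemma via a uniform exponential moment bound. Once it is established that $\sup_{n\in\N}\E\exp(u\,n^{-2}\Wc_n)<\infty$ for some $u>0$, all polynomial moments of $n^{-2}\Wc_n$ are uniformly bounded, the family $\{(n^{-2}\Wc_n)^s\}_{n\in\N}$ is uniformly integrable for every $s>0$, and the convergence of $s$-th moments follows by the Vitali convergence theorem.

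To obtain a closed form for $\E s^{\Wc_n}$, I would exploit the fact that the descendants of distinct immigrants are independent: if $\mathcal{T}_m$ denotes the total progeny in the first $m$ generations of descent of a single critical Galton--Watson tree with ${\rm Geom}(1/2)$ offspring and no further immigration, then
$$\Wc_n \od \sum_{k=1}^n \mathcal{T}^{(k)}_{n-k+1},$$
where the $\mathcal{T}^{(k)}_m$ are independent copies of $\mathcal{T}_m$. A first-generation decomposition yields the recursion $g_m(s):=\E s^{\mathcal{T}_m}=1/(2-s\,g_{m-1}(s))$ with $g_0\equiv 1$, which is solved by the substitution $a=1+\sqrt{1-s}$, $b=1-\sqrt{1-s}$ to give $g_m(s)=(a^m+b^m)/(a^{m+1}+b^{m+1})$. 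The product then telescopes to
$$\E s^{\Wc_n}=\prod_{m=1}^n g_m(s)=\frac{2}{(1+\sqrt{1-s})^{n+1}+(1-\sqrt{1-s})^{n+1}}.$$

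With this explicit formula, I would verify the uniform exponential moment bound by taking $s=e^{u/n^2}$ for small $u>0$, so that $1-s<0$ and $\sqrt{1-s}=i\sqrt{s-1}$ is purely imaginary of modulus of order $\sqrt{u}/n$. The denominator then equals $2s^{(n+1)/2}\cos\bigl((n+1)\arctan\sqrt{s-1}\bigr)$, and since $\arctan\sqrt{s-1}\leq \sqrt{s-1}\leq (\sqrt{u}/n)\sqrt{e^{u/n^2}}$, the angle $(n+1)\arctan\sqrt{s-1}$ is bounded in terms of $u$ alone, uniformly in $n$. Choosing $u>0$ small enough keeps this angle strictly below $\pi/2$, the cosine bounded below by a positive constant and $s^{(n+1)/2}\geq 1$, yielding the desired uniform bound. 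As a consistency check, the pointwise limit $1/\cos(\sqrt{u})$ matches the analytic continuation of $\E e^{-u\vartheta}=1/\cosh(\sqrt{u})$ to $u<0$. The main obstacle is spotting the independent-subtree decomposition which leads to the telescoping product; once this structural point is identified, the denominator estimate is routine calculus.
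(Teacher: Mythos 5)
Your proposal is correct, and its skeleton coincides with the paper's: both rest on the distributional convergence $n^{-2}\Wc_n\dod\vartheta$, a uniform bound on $\E\exp(u\,n^{-2}\Wc_n)$ for some small $u>0$, and uniform integrability (de la Vall\'ee--Poussin in the paper, Vitali in your version), and both obtain the exponential bound from the same decomposition of $\Wc_n$ into the independent progenies $\Yc(j,n)$ of the individual immigrants and the same recursion $g_m(s)=1/(2-s\,g_{m-1}(s))$. The difference lies in how the bound is extracted: the paper does not solve the recursion but proves by induction the inequality $b_j(x)\le 1+Kx(j+1)$ for $b_j(x)=e^x\,\E\exp(x\Yc(1,j))$ and multiplies these estimates out, whereas you solve the recursion exactly (correctly: with $a=1+\sqrt{1-s}$, $b=1-\sqrt{1-s}$ one has $a+b=2$, $ab=s$, so $p_m:=a^m+b^m$ satisfies $p_{m+1}=2p_m-sp_{m-1}$, $g_m=p_m/p_{m+1}$, and the product telescopes), arriving at $\E s^{\Wc_n}=2/\bigl((1+\sqrt{1-s})^{n+1}+(1-\sqrt{1-s})^{n+1}\bigr)$. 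This closed form buys more than the paper's estimate: taking $s=e^{-u/n^2}$ it gives $\lim_{n\to\infty}\E\exp(-u\,n^{-2}\Wc_n)=1/\cosh(\sqrt{u})$ directly, so it independently reproves \eqref{distr_conv} for this offspring law, and the evaluation of the denominator as $2s^{(n+1)/2}\cos\bigl((n+1)\arctan\sqrt{s-1}\bigr)$ for $s=e^{u/n^2}$ together with your angle bound yields the uniform exponential bound at once. The one point you should state explicitly is that for $s>1$ the closed form is the actual generating function and not merely the formal solution of the recursion: one must check that no intermediate step diverges, i.e.\ that $s\,g_{j-1}(s)<2$, equivalently $p_{j+1}>0$, for every $j\le n$; since the angle $(j+1)\arctan\sqrt{s-1}$ is increasing in $j$, your condition at $j=n$ covers all smaller $j$, so this is a one-line addition rather than a gap.
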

\begin{proof}
Suppose for the moment that we have verified that
\begin{equation}\label{bound}
\sup_{n\geq n_0}\E\exp(\beta n^{-2}\Wc_n)<\infty
\end{equation}
for some $\beta>0$ and some $n_0\in\NN$. Then in view of
$$\sup_{n\geq n_0} \E(n^{-2}\Wc_n)^s\leq  C(s) \sup_{n\geq n_0}\E\exp(\beta n^{-2}\Wc_n)<\infty$$
for all $s>0$ and some constant $C(s)$, the Vall\'{e}e--Poussin
criterion for uniform integrability in combination with
\eqref{distr_conv} ensures \eqref{conv_mom}.

Left with the proof of \eqref{bound} observe that, for fixed
$k\in\N$, the process initiated by the $k$th immigrant
$(\Zc(k,n))_{n\ge k}$ is a Galton--Watson process with ${\rm
Geom}(1/2)$ offspring distribution. Moreover, the processes
started by different immigrants are iid. Therefore, writing
\begin{align*}
\Wc_n & =\sum_{k=1}^{n} \Zc_k = \sum_{k=1}^n \sum_{j=1}^k \Zc(j,k)\\
&= \sum_{j=1}^n \bigg(  \sum_{k=j}^n \Zc(j,k)  \bigg)=
\sum_{j=1}^n \Yc(j,n)\quad \text{a.s.}
\end{align*}
we obtain a representation of $\Wc_n$ as the sum of independent random variables. This formula entails
\begin{equation}\label{eq:z_n_repp}
\E\exp\big(x \Wc_n\big)=\prod_{j=1}^n a_j(x),\qquad  x\ge 0
\end{equation}
(the case that both sides of \eqref{eq:z_n_repp} are infinite for some $x>0$ is not excluded), where
$$a_j(x):=\E \exp \big( x \Yc(n-j+1,n)\big) = \E \exp \big( x \Yc(1,j)\big),\quad 1\leq j\leq n, x\geq 0.$$
We have $a_0(x) =1$ for all $x\geq 0$ and
\begin{equation*}
a_1(x) = \E\exp\big( x\Zc(1,1)  \big) = \sum_{k\geq
0}e^{kx}2^{-k-1}=(2-e^x)^{-1}
\end{equation*}
for $x\in [0,\log 2)$. Using a decomposition
\begin{equation}\label{repr2}
\Yc(1,j) = \sum_{m=1}^{\Zc(1,1)}\Yc_m(1,j-1)  + \Zc(1,1),\quad
j\geq 2\quad \text{a.s.},
\end{equation}
where $(\Yc_m(1,j-1) )_{m\in\N}$ are independent copies of
$\Yc(1,j-1)$ which are also independent of $\Zc(1,1)$ we infer
$$a_j(x) = \frac {1}{2-e^xa_{j-1}(x)},\quad j\in\N.$$ In
particular, for every fixed $j\in\N_0$, $a_j(x)<\infty$ for all
$x$ from some right vicinity of the origin.

Set $b_j(x) = e^x a_j(x)$ for $j\in \N_0$ and $x\geq 0$, so that
$$b_j(x) = \frac {e^x}{2-b_{j-1}(x)}.$$ By technical reasons, it is more convenient to work with $b_j$
rather than $a_j$. We intend to show that, for every
$\gamma\in(0,1/4)$, there exists $K=K(\gamma)>1$ and
$x_0(\gamma)>0$ such that
\begin{equation}\label{eq:a_j_up_bound}
b_j(x)\leq 1+Kx(j+1).
\end{equation}
for $j\in \N_0$ and $x>0$ satisfying $j(1+j)x\leq \gamma$ and
$x<x_0(\gamma)$.

Given $\gamma\in(0,1/4)$ pick $K>1$ such that $K-K^2\gamma>1$.
This is possible because the largest root of the quadratic
equation $\gamma x^2-x+1=0$ is larger than one. There exists
$x_0(\gamma)>0$ such that
$$e^x \leq 1+(K-K^2\gamma)x,\quad x\in(0,x_0(\gamma)).$$ Moreover, since we assume $j(1+j)x\leq \gamma$ we have
$$e^x \leq 1+Kx - K^2x^2 j(j+1)=(1-Kxj)(1+Kx(j+1)).$$ Now \eqref{eq:a_j_up_bound} follows by the mathematical induction.
While for $j=0$ we obtain $$b_0(x)=e^x\leq 1+(K-K^2\gamma)x\leq
1+Kx,\quad x\in(0,x_0(\gamma)),$$ an induction step works as
follows $$b_j(x)=\frac{e^x}{2-b_{j-1}(x)}\leq
\frac{e^x}{1-Kjx}\leq 1+Kx(j+1)$$ for $x\in(0,x_0(\gamma))$ and
$j(j+1)x\leq\gamma$. The proof of~\eqref{eq:a_j_up_bound} is
complete.

Armed with~\eqref{eq:a_j_up_bound} we can deduce \eqref{bound}.
Given $\beta \in (0,1/4)$ take $\gamma\in(\beta,1/4)$ and pick
$n_0\in\NN$ such that $\beta/n^2<x_0(\gamma)$ and $(n+1)\beta\leq
n\gamma$ for $n \geq n_0$. Such a choice ensures that $j(j+1)\beta
n^{-2}\leq \gamma$ for integer $0\leq j\leq n$ whenever $n\geq
n_0$. Using \eqref{eq:z_n_repp} and then \eqref{eq:a_j_up_bound}
we arrive at $$\E \exp(\beta n^{-2} \Wc_n)=\prod_{j=0}^n
a_j\left(\beta n^{-2}\right)\leq \prod_{j=0}^n b_j\left(\beta
n^{-2}\right)\leq \prod_{j=0}^n(1+K\beta n^{-2}(j+1)),\quad n\geq
n_0$$ for $\beta\in(0,1/4)$. It remains to note that $$\sup_{n\geq
n_0} \prod_{j=0}^n(1+K\beta n^{-2}(j+1))\leq
\exp(3K\beta)<\infty,$$ thereby finishing the proof of
\eqref{bound}.
\end{proof}

We are now ready to prove Proposition \ref{prop:GW} and Lemma
\ref{lem:mom}.
\begin{proof}[Proof of Proposition \ref{prop:GW}]
By virtue of \eqref{distr_conv} we infer $\Wc_n\to\infty$ in probability and then $\Wc_n\to\infty$ a.s.\ by monotonicity. Therefore,
$$\upsilon_x:=\inf\{k\in\NN:\Wc_k >x\}\in [1,\infty)\quad\text{a.s}\quad \text{for } x>1.$$
For $x>1$ we have
$$\P\{\Wc_\varsigma>x\}=\P\{\varsigma\geq \upsilon_x\}=\E
h(\upsilon_x),$$ where $h(y):=\P\{\varsigma\geq y\}$. Under the
introduced notation, we have to prove that
\begin{equation}\label{conv_mom2}
\lim_{x\to\infty}\frac{\E h(\upsilon_x)}{h(x^{1/2})}
=\E\vartheta^\alpha.
\end{equation}
By a standard inversion technique \`{a} la Feller (see Theorem 7 in \cite{feller1949}) \eqref{distr_conv} entails
\begin{equation}\label{distr2}
\frac{\upsilon_x}{x^{1/2}}~\dod~ \vartheta^{-1/2},\quad
x\to\infty.
\end{equation}
We claim that the latter implies further that
\begin{equation}\label{distr_conv2}
\frac{h(\upsilon_x)}{h(x^{1/2})}~\dod~ \vartheta^{\alpha},\quad
x\to\infty.
\end{equation}
The simplest way to see it is to pass in \eqref{distr2} to versions which converge a.s., that is,
$$\lim_{x\to\infty}x^{-1/2}\upsilon_x^\ast=(\vartheta^\ast)^{-1/2}\quad\text{a.s.}$$
and then exploit the fact that
$$\lim_{x\to\infty}\frac{h(y(x)x^{1/2})}{h(x^{1/2})}=y^{-2\alpha}\quad\text{whenever}\quad
\lim_{x\to\infty}y(x)=y\in (0,\infty)$$ (see Theorem 1.5.2 in
\cite{bingham1989regular}). This gives
$$\lim_{x\to\infty}\frac{ h((x^{-1/2}\upsilon^\ast_x)x^{1/2})}{h(x^{1/2})}=(\vartheta^\ast)^{\alpha}\quad\text{a.s.}$$
because $\vartheta^\ast>0$ a.s.

With \eqref{distr_conv2} at hand, relation \eqref{conv_mom2}
follows if we can show that the family
$\left(h(\upsilon_x)/h(x^{1/2})\right)_{x\geq x_0}$ is uniformly
integrable for some $x_0>0$. By Potter's bound for regularly
varying functions (Theorem 1.5.6 (iii) in
\cite{bingham1989regular}), given $A>1$ and $\delta>0$ there
exists $n_1\in\NN$ such that
$$\frac{h(\upsilon_x){\bf 1}_{\{\upsilon_x>n_1\}}}{h(x^{1/2})}
\leq A \max ((x^{-1/2}\upsilon_x)^{-2\alpha-\delta},
(x^{-1/2}\upsilon_x)^{-2\alpha+\delta})\quad\text{a.s.}$$ whenever
$x\geq n_1^2$. Further, by monotonicity of $h$,
$$\frac{h(\upsilon_x){\bf 1}_{\{\upsilon_x\leq n_1\}}}{h(x^{1/2})} \leq \frac{h(1)}{h(x^{1/2})}{\bf 1}_{\{\upsilon_x\leq n_1\}}\quad\text{a.s.}$$
Thus, for uniform integrability of $\left(h(\upsilon_x)/h(x^{1/2})\right)_{x\geq x_0}$ it suffices to check two things: first,
\begin{equation}\label{eq:t_x_unif_integrability}
\sup_{x\geq 4} x^{\beta/2}\E \upsilon_x^{-\beta}<\infty
\end{equation}
for some $\beta>2\alpha$ and second
\begin{equation}\label{ineq2}
\sup_{x\geq x_0}\left( \frac{h(1)}{h(x^{1/2})}
\right)^\gamma\P{\{\upsilon_x\leq n_1\}}<\infty
\end{equation}
for some $\gamma>1$.

From the proof of Lemma \ref{lem:exp_moments} we know that $\E
\exp(s\Wc_{n_1})<\infty$ for some $s>0$, whence
\begin{equation*}
\P\{\upsilon_x\leq n_1\}=\P\{\Wc_{n_1}>x\}=O(e^{-sx}),\quad
x\to\infty
\end{equation*}
which proves \eqref{ineq2}.

Now we intend to show that \eqref{eq:t_x_unif_integrability} holds for all $\beta> 0$. We have for $x\geq 4$
\begin{align*}
\E \upsilon_x^{-\beta}&=\int_0^{1}\P\{\upsilon_x^{-\beta}>y\}{\rm d}y=\beta\int_1^\infty\P\{\upsilon_x\leq z\}z^{-\beta-1}{\rm d}z
\leq  \beta \sum_{k\geq 2}\P\{\upsilon_x\leq k\}(k-1)^{-\beta-1}\\
&=\beta \sum_{k=2}^{[x^{1/2}]}\P\{\Wc_k>x\}(k-1)^{-\beta-1}+\beta \sum_{k\geq [x^{1/2}]+1}\P\{\Wc_k>x\}(k-1)^{-\beta-1}\\
& \leq \beta \sum_{k=2}^{[x^{1/2}]}\frac{\E
(\Wc_k)^{\beta}}{x^{\beta}(k-1)^{\beta+1}}+\beta\sum_{k\geq
[x^{1/2}]+1}\frac{1}{(k-1)^{\beta+1}}\leq \frac{{\rm
const}}{x^{\beta}}\sum_{k=1}^{[x^{1/2}]}k^{\beta-1}+O(x^{-\beta/2})=O(x^{-\beta/2}),
\end{align*}
where the last and penultimate inequalities follow from
Lemma~\ref{lem:exp_moments} and Markov's inequality, respectively.
The proof of Proposition \ref{prop:GW} is complete.
\end{proof}
\begin{proof}[Proof of Lemma \ref{lem:mom}]
By Lemma \ref{lem:exp_moments}, $\E [n^{-2}\Wc_n]^\alpha \leq C$
for all $n\in\N$ and some $C>0$. This entails
$$\E [\Wc_\varsigma]^\alpha=\sum_{n\geq 1}\E [n^{-2}\Wc_n]^\alpha n^{2\alpha} \P\{\varsigma=n\}\leq C
\E\varsigma^{2\alpha}<\infty.$$ The proof of Lemma \ref{lem:mom}
is complete.
\end{proof}

For later use, we note that, for $n\in\N$,
\begin{equation}\label{eq:harris}
\begin{split}
\E \Zc(1,n)  = 1, &\quad {\rm Var}\, \Zc(1,n) = 2n, \\
\E \Yc(1,n)  = n, &\quad {\rm Var}\, \Yc(1,n) =
\frac{n(n+1)(2n+1)}{3}.
\end{split}
\end{equation}
The first three of these equalities follow by an elementary
calculation. The fourth one can be derived with the help of
\eqref{repr2} and the mathematical induction.

\section{Proofs}\label{sec:proofs_all}

\subsection{Proof of Proposition \ref{thm:lln}}\label{sec:lln}

Recalling that $v=\me\xi /\me T_{S_1}$ it suffices to show that
$$\me T_{S_1} = \left\{  \begin{array}{cc} \E \xi^2+ \frac{2\me\xi\me\rho\xi}{1-\E \rho},
& \text{if }\E \rho<1, \: \E \rho\xi<\infty, \E \xi^2<\infty; \\
\infty, & \text{otherwise.} \end{array}\right.$$ Using
\eqref{repr} yields
$$\frac{T_{S_n}}{n}\topr \me T_{S_1},\quad n\to\infty\quad \Longleftrightarrow\quad
\frac{\sum_{j=1}^{S_n} Z_j}{n}=\frac{W_{S_n}}{n}\topr
\frac{1}{2}\left(\me T_{S_1} -\E\xi\right),\quad n\to\infty.$$ Let
us prove the latter convergence in probability. According to Lemma
\ref{lem:nu}, we have $\me \tau_1<\infty$ whenever $\me
\log\rho\in [-\infty, 0)$ and $\me\log^+ \xi<\infty$. Recalling
from \eqref{eq:main_two_sided_estimate} that
$$\frac{1}{n}\sum_{k=1}^{\tau_n^{\ast}}\bar \W_{\tau_k}\leq
\frac{W_{S_n}}{n}\leq\frac{1}{n}\sum_{k=1}^{\tau_n^{\ast}+1}\bar
\W_{\tau_k}$$ we conclude by the strong law of large numbers that
$$\lim_{n\to\infty} \frac{W_{S_n}}{n}=\frac{1}{\E \tau_1}\E\bar \W_{\tau_1}\quad \P-\text{a.s.}$$ Hence, $$\me T_{S_1}=\me \xi+ \frac{2}{\E
\tau_1}\E\bar \W_{\tau_1}.$$

Left with identifying $\E\bar \W_{\tau_1}$ we recall that, for
$k\in\N$, $\Y_k$ denotes the total progeny of immigrants arriving
in the generations $S_{k-1},\ldots, S_k-1$, that is,
$$\Y_k = \sum_{j=S_{k-1}+1}^{S_k} Y(j,\infty).$$
Since $\Y_1$, $\Y_2,\ldots$ are identically distributed and, for
$k\in\N$, $\Y_k$ is independent of $\{\tau_1\geq
k\}=\{Z_{S_1}>0,\ldots, Z_{S_{k-1}}>0\}$ we infer
\begin{align*}
\E \bar \W_{\tau_1}& = \E \sum_{k=1}^{\tau_1}\Y_k = \sum_{k\geq 1}
\E \Y_k {\bf 1}_{\{\tau_1 \geq k \}} =\sum_{k\geq 1} \E \Y_k \P\{
\tau_1 \geq k \} = \E \Y_1 \E \tau_1
\end{align*}
(if $\E \Y_1=\infty$, the formula just says that $\E \bar
\W_{\tau_1}=\infty$). To calculate $\E\Y_1$ we note that
$$\E_\omega Y(j,\infty){\bf 1}_{\{j\leq \xi_1\}} = \Big(\xi_1 -j +
\sum_{k\geq 2}\xi_k \prod_{i=1}^{k-1}\rho_i\Big){\bf 1}_{\{j\leq
\xi_1\}}\quad \text{a.s.},$$ whence
$$\E_\omega\Y_1 = \frac{\xi_1(\xi_1-1)}{2} + \xi_1\rho_1 \sum_{k\geq
2}\xi_k \prod_{i=2}^{k-1}\rho_i\quad \text{a.s.},$$ where the
a.s.\ convergence of the last series is secured by our assumptions
$\me \log\rho\in [-\infty, 0)$ and $\me\xi<\infty$. Taking the
expectation with respect to $\P$ yields
$$\E \Y_1 = \left\{  \begin{array}{cc} \frac 12 \E \xi(\xi-1) + \frac{\E\xi \E \rho\xi}{1-\E \rho},  & \text{if }\E \rho<1,
\E \rho\xi<\infty, \E \xi^2<\infty; \\
\infty, & \text{otherwise. } \end{array}\right.$$ The proof of
Proposition \ref{thm:lln} is complete.

\subsection{Proof of Theorem \ref{thm:main1T} and Corollary
\ref{cor:main1X}}\label{7.2}

The assumptions of Theorem \ref{thm:main1T} ensure that
$\E\xi<\infty$ and that $\mu:=\E \tau_1$ and $s^2:={\rm
Var}\,\tau_1$ are finite (for the latter use Lemma \ref{lem:nu}).
It is also clear that the distribution of $\tau_1$ is
nondegenerate, whence $s^2>0$.

From Proposition \ref{prop:tail_main} (parts {\rm (C1)} and {\rm
(C2)}) we know that
$$\P\{\bar \W_{\tau_1}>x\}\sim C x^{-\alpha},\quad x\to\infty,$$
where $C=C_2(\alpha)$ in the cases ${\rm (A1)}$ and ${\rm (A2)}$
and $C=(\E\tau_1)(\E \vartheta^\alpha)C_{\ell}+C_2(\alpha)$ in the
case ${\rm (A3)}$. Therefore, the distribution of $\bar
\W_{\tau_1}$ belongs to the domain of attraction of an
$\alpha$-stable distribution. This means that
\begin{equation}\label{eq:weak_limit_thm1_proof1}
\frac{\sum_{k=1}^n \bar \W_{\tau_k}-a(n)}{b(n)}~\dod~
\mathcal{S}_{\alpha},\quad n\to\infty
\end{equation}
for some $a(t)$ and $b(t)$, where $\mathcal{S}_2\od
\mathcal{N}(0,1)$. To find $a(t)$ and $b(t)$ explicitly we use
Theorem 3 on p.~580 and formula (8.15) on p.~315 in
\cite{feller:1971}:

\noindent $b(t)=(Ct)^{1/\alpha}$ and $a(t)=0$ if $\alpha\in (0,1)$;

\noindent $b(t)=Ct$ and $a(t)=t\int_0^{Ct}\P\{\bar \W_{\tau_1}>x\}{\rm
d}x$ if $\alpha=1$;

\noindent $b(t)=(Ct)^{1/\alpha}$ and $a(t)=(\E \bar \W_{\tau_1})t$ if
$\alpha\in (1,2)$;

\noindent $b(t)=(Ct\log t)^{1/2}$ and $a(t)=(\E \bar \W_{\tau_1})t$ if
$\alpha=2$.

Our subsequent proof will be based on representation \eqref{repr}.
In view of this we first analyze the asymptotics of $W_{S_n}$.

\noindent {\sc Step 1. Limit theorems for $W_{S_n}$}. We claim
that
\begin{equation}\label{eq:weak_limit_thm1_proof21}
\frac{W_{S_n}-a(\mu^{-1}n)}{b(\mu^{-1} n)}\dod
\mathcal{S}_{\alpha},\quad n\to\infty.
\end{equation}
In view of \eqref{eq:main_two_sided_estimate} relation
\eqref{eq:weak_limit_thm1_proof21} follows once we have checked
that \eqref{eq:weak_limit_thm1_proof1} entails
\begin{equation}\label{eq:weak_limit_thm1_proof2}
\frac{\sum_{k=1}^{\tau_n^{\ast}}\bar
\W_{\tau_k}-a(\mu^{-1}n)}{b(\mu^{-1} n)}~\dod~
\mathcal{S}_{\alpha}\quad \text{and}\quad
\frac{\sum_{k=1}^{\tau_n^{\ast}+1}\bar
\W_{\tau_k}-a(\mu^{-1}n)}{b(\mu^{-1} n)}~\dod~
\mathcal{S}_{\alpha},\quad n\to\infty.
\end{equation}

According to the central limit theorem for renewal processes
$$
\frac{\tau_n^{\ast}-\mu^{-1}n}{s\mu^{-3/2}\sqrt{n}}~\dod~
\mathcal{N}(0,1),\quad n\to\infty.
$$
This implies that, for $\varepsilon>0$ small enough, we can pick
$z=z(\varepsilon)$ so large that
$$
\P\{\tau_n^{\ast}\geq t_n\}\geq 1-\varepsilon,$$ where
$t_n:=[\mu^{-1}n-s\mu^{-3/2}z\sqrt{n}]$. Note that $n=\mu
t_n+O\big(t_n^{1/2}\big)$ and that
\begin{equation}\label{ineq}
\lim_{n\to\infty}\frac{a(t_n)-a\big(t_n+O\big(t_n^{1/2}\big)\big)}{b\big(t_n+O\big(t_n^{1/2}\big)\big)}=0\quad\text{and}\quad
\lim_{n\to\infty}\frac{b\big(t_n+O\big(t_n^{1/2}\big)\big)}{b(t_n)}=1.
\end{equation}
These can be easily checked with the exception of the case
$\alpha=1$ in which a proof of the first relation is needed: for
any $r\in (1,2]$,
\begin{eqnarray}
&&\frac{a\big(t_n+O\big(t_n^{1/r}\big)\big)-a(t_n)}{b(t_n)}\notag
\\&=&\frac{t_n \int_{Ct_n}^{Ct_n+O(t_n^{1/r})}\P\{\bar
\W_{\tau_1}>x\}{\rm d}x+
O(t_n^{1/r})\int_0^{Ct_n+O(t_n^{1/r})}\P\{\bar \W_{\tau_1}>x
\}{\rm d}x}{Ct_n}\notag \\&\leq& \frac{O\big(t_n^{1/r}\big)\log
t_n}{t_n}=o(1),\quad n\to\infty.\label{ineq3}
\end{eqnarray}
Motivated by our later needs we have proved this in a slightly
extended form with $r$ instead of $2$.

To prove the first relation in \eqref{eq:weak_limit_thm1_proof2}
we write, for $x\in \R$,
\begin{align*}
\P\left\{\frac{\sum_{k=1}^{\tau_n^{\ast}}\bar \W_{\tau_k}-a(\mu^{-1}n)}{b(\mu^{-1} n)}\leq x\right\}&\leq \varepsilon +
\P\left\{\frac{\sum_{k=1}^{t_n}\bar \W_{\tau_k}-a(\mu^{-1}n)}{b(\mu^{-1} n)}\leq x\right\}\\
&=\varepsilon + \P\left\{\frac{\sum_{k=1}^{t_n}\bar
\W_{\tau_k}-a\big(t_n+O\big(t_n^{1/2}\big)\big)}{b\big(t_n+O\big(t_n^{1/2}\big)\big)}\leq
x\right\}.
\end{align*}
Sending $n\to\infty$ in the last inequality and using
\eqref{eq:weak_limit_thm1_proof1} and \eqref{ineq} we obtain
$$\limsup_{n\to\infty}\P\left\{\frac{\sum_{k=1}^{\tau_n^{\ast}}\bar
\W_{\tau_k}-a(\mu^{-1}n)}{b(\mu^{-1} n)}\leq x\right\}\leq
\varepsilon+ \P\{\mathcal{S}_{\alpha}\leq x\}.$$ Letting now
$\varepsilon\to 0+$ yields
$$\limsup_{n\to\infty}\P\left\{\frac{\sum_{k=1}^{\tau_n^{\ast}}\bar
\W_{\tau_k}-a(\mu^{-1}n)}{b(\mu^{-1} n)}\leq x\right\}\leq
\P\{\mathcal{S}_{\alpha}\leq x\}.$$ A symmetric argument leads to
$$\liminf_{n\to\infty}\P\left\{\frac{\sum_{k=1}^{\tau_n^{\ast}}\bar
\W_{\tau_k}-a(\mu^{-1}n)}{b(\mu^{-1} n)}\leq x\right\}\geq
\P\{\mathcal{S}_{\alpha}\leq x\}.$$ The second relation in
\eqref{eq:weak_limit_thm1_proof2} follows in a similar manner.

\noindent {\sc Step 2. Limit theorems for $T_{S_n}$.}

\noindent {\sc Case $\alpha>1$}. Since $\E\xi^2<\infty$ and
$\sqrt{n}=o(b(\mu^{-1}n))$ we infer
$$\frac{S_n-(\E\xi)n}{b(\mu^{-1}n)}~\topr~ 0,\quad n\to\infty$$
by the central limit theorem. Now
\begin{equation}\label{alpha>1}
\frac{T_{S_n}-(\me\xi+2\mu^{-1}\me\bar\W_{\tau_1})n}{b(\mu^{-1}n)}~\dod~2\mathcal{S}_\alpha,\quad
n\to\infty
\end{equation}
follows from \eqref{eq:weak_limit_thm1_proof21} and \eqref{repr}
written in an equivalent form
$$T_{S_n}~\od~ (S_n-(\E\xi)n)+(\E\xi)n+2W_{S_n}+O_\P(1),\quad
n\to\infty.$$

\noindent {\sc Case $\alpha=1$}. Using the weak law of large
numbers and \eqref{eq:weak_limit_thm1_proof21} we arrive at
\begin{equation}\label{alpha1}
\frac{T_{S_n}-2a(\mu^{-1}n)}{C\mu^{-1} n}~\dod~ \frac{\mu
\E\xi}{C}+2\mathcal{S}_1,\quad n\to\infty.
\end{equation}

\noindent {\sc Case $\alpha<1$}. Since $n=o(b(\mu^{-1}n))$ we
conclude that $\frac{S_n}{b(\mu^{-1}n)}\topr 0$ as $n\to\infty$ by
the weak law of large numbers. This in combination with
\eqref{eq:weak_limit_thm1_proof21} and \eqref{repr} proves
\begin{equation}\label{alpha<1}
\frac{T_{S_n}}{(C \mu^{-1}n)^{1/\alpha}}~\dod~
2\mathcal{S}_\alpha,\quad n\to\infty.
\end{equation}

\noindent {\sc Step 3. Limit theorem for $T_n$}. Set
$$\nu(t)=\inf\{k\in\N: S_k>t\},\quad t\geq 0,$$ so that
$(\nu(t))_{t\geq 0}$ is the first passage time process associated
with the random walk $(S_k)_{k\in\N_0}$. The reason for
introducing $\nu(t)$ is justified by
\begin{equation}\label{ineq4}
T_{S_{\nu(n)-1}}\leq T_n\leq T_{S_{\nu(n)}},\quad n\in\N.
\end{equation}

\noindent {\sc Case $\alpha\geq 1$}. Fix any $r\in (1,2)$. Then
$\E\xi^r<\infty$ and thereupon
\begin{equation}\label{marc}
\nu(t)-(\E\xi)^{-1}t=o(t^{1/r}),\quad t\to\infty\quad \text{a.s.}
\end{equation}
by Theorem 4.4 on p.~89 in \cite{Gut:2009}.

\noindent {\sc Subcase $\alpha=1$}. Using \eqref{ineq4} we obtain,
for any $x\in\R$ and $\varepsilon>0$,
\begin{eqnarray*}
&&\P\Big\{\frac{T_n-2a((\mu\me\xi)^{-1}n)}{C(\mu\E\xi)^{-1}n}>x\Big\}\leq
\P\Big\{\frac
{T_{S_{\nu(n)}}-2a((\mu\me\xi)^{-1}n)}{C(\mu\E\xi)^{-1}n}>x\Big\}\\&\leq&
\P\big\{\nu(n)> (\me \xi)^{-1}n+\varepsilon
n^{1/r}\big\}+\P\Big\{\frac{T_{S_{[(\me \xi)^{-1}n+\varepsilon
n^{1/r}]}}-2a([(\me \xi)^{-1}n+\varepsilon
n^{1/r}])}{C(\mu\me\xi)^{-1} n}\\&+&\frac{2a([(\me
\xi)^{-1}n+\varepsilon
n^{1/r}])-2a((\mu\me\xi)^{-1}n)}{C(\mu\E\xi)^{-1}n}>x\Big\}.
\end{eqnarray*}
Letting $n\to\infty$ yields, for $x\in\R$,
$$\limsup_{n\to\infty}
\P\Big\{\frac{T_n-2a((\mu\me\xi)^{-1}n)}{C(\mu\E\xi)^{-1}n}>x\Big\}\leq
\P\Big\{\frac{\mu\me\xi}{C}+2\mathcal{S}_1>x\Big\}$$ having
utilized \eqref{ineq3}, \eqref{alpha1} and \eqref{marc}. Arguing
similarly we get the converse inequality for the lower limit,
thereby proving that
\begin{equation}\label{alp=1}
\frac{T_n-2a((\mu\me\xi)^{-1}n)}{C(\mu\E\xi)^{-1}n}~\dod~\frac{\mu\me\xi}{C}+2\mathcal{S}_1,\quad
n\to\infty.
\end{equation}

\noindent {\sc Subcase $\alpha>1$}. An analogous but simpler
argument enables us to show that \eqref{alpha>1} entails
\begin{equation}\label{alp>1}
\frac{T_n-(1+2(\mu\E\xi)^{-1}\me\bar\W_{\tau_1})n}{b((\mu\E\xi)^{-1}n)}~\dod~2\mathcal{S}_\alpha,\quad
n\to\infty.
\end{equation}

\noindent {\sc Case $\alpha<1$}. The proof given for the case
$\alpha\geq 1$ does not work in the case ${\rm (A1)}$ when {\ma
$\alpha\leq 1/2$} because it is then not necessarily true that
$\me \xi^r<\infty$ for some $r>1$. In view of this we use the weak
law of large numbers
\begin{equation}\label{weak}
\frac{\nu(t)}{t}~\topr~ \frac{1}{\mu},\quad t\to\infty
\end{equation}
rather than the Marcinkiewicz-Zygmund strong law \eqref{marc}.

Another appeal to \eqref{ineq4} gives, for any $x\in\R$ and
$\varepsilon>0$,
\begin{multline*}
\P\left\{\frac{T_n}{(C(\mu\E\xi)^{-1}n)^{1/\alpha}}>x\right\}\leq \P\left\{\frac{T_{S_{\nu(n)}}}{(C(\mu\E\xi)^{-1}n)^{1/\alpha}}>x\right\}\\
\leq
\P\{\nu(n)>((\E\xi)^{-1}+\varepsilon)n\}+\P\left\{\frac{T_{S_{[((\E\xi)^{-1}+\varepsilon)n]}}}{(C(\mu\E\xi)^{-1}n)^{1/\alpha}}>x\right\}.
\end{multline*}
Sending $n\to\infty$ we obtain with the help of \eqref{alpha<1}
and \eqref{weak}
$$\limsup_{n\to\infty}\P\left\{\frac{T_n}{(C(\mu\E\xi)^{-1}n)^{1/\alpha}}>x\right\}\leq
\P\{2\mathcal{S}_{\alpha}>x(1+\varepsilon \E\xi)^{-1/\alpha}\}.$$
Letting $\varepsilon\to 0+$ and using continuity of the
distribution of $\mathcal{S}_{\alpha}$ yields
$$\limsup_{n\to\infty}\P\left\{\frac{T_n}{(C(\mu\E\xi)^{-1}n)^{1/\alpha}}>x\right\}\leq
\P\{2\mathcal{S}_{\alpha}>x\}.$$ The converse inequality for the lower limit can be derived analogously. Thus,
\begin{equation}\label{alp<1}
\frac{T_n}{(C(\mu\E\xi)^{-1}n)^{1/\alpha}}~\dod~2\mathcal{S}_{\alpha},\quad
n\to\infty.
\end{equation}
The proof of Theorem \ref{thm:main1T} is complete.

\begin{proof}[Proof of Corollary \ref{cor:main1X}] The forms of
limit relations for $T_n$ in our Theorem \ref{thm:main1T} and
Theorem on pp.~146--148 in \cite{kesten1975limit} are the same,
only the values of constants differ. In view of this the limit
relations for $X_k$ in our setting are obtained by copying the
corresponding limit relations from the aforementioned theorem in
\cite{kesten1975limit}.
\end{proof}

\subsection{Proof of Theorem \ref{thm:main2T} and Corollary \ref{cor:main2X}}\label{7.3}

The proof goes the same path as that of Theorem \ref{thm:main1T}.
However, appearance of nontrivial slowly varying factors leads to
minor technical complications. We shall only give the weak
convergence results explicitly (recall that in the formulation of
Theorem \ref{thm:main2T} normalizing and centering functions were
not specified). Also, we shall check several claims wherever we
feel it is necessary.

According to Proposition \ref{prop:tail_main} (parts ${\rm (C3)}$
and ${\rm (C4)}$), $$\P\{\bar \W_{\tau_1}>x\}~\sim~ \E\tau_1 \E
\vartheta^\alpha x^{-\alpha}\ell(x^{1/2}),\quad x\to\infty,$$
where $\alpha=\beta/2$ in case $(B2)$. Therefore, limit relation
\eqref{eq:weak_limit_thm1_proof1} holds with some $a(t)$ and
$b(t)$. To identify them we need more notation. For $\alpha\in
(1/2, 2)$, let $c_\alpha(t)$ be any positive function satisfying
$\lim_{t\to\infty}t\P\{\bar \W_{\tau_1}>c_\alpha(t)\}=1$. Further,
assuming that $\alpha=2$ let $r_2(t)$ be any positive function
satisfying $\lim_{t\to\infty}\int_{[0,\,r_2(t)]} x^2 {\rm
d}\P\{\bar \W_{\tau_1}\leq x\}/(r_2(t))^2=1$. By Lemma 6.1.3 in
\cite{iksanovrenewal}, $c_\alpha(t)$ and $r_2(t)$ are regularly
varying at $\infty$ of indices $1/\alpha$ and $1/2$, respectively.
For the latter, the fact is also needed that the function
$t\mapsto \int_{[0,\,r_2(t)]} x^2 {\rm d}\P\{\bar \W_{\tau_1}\leq
x\}$ is slowly varying at $\infty$. Observe that the case
$\alpha=2$ only arises under the assumptions ${\rm (B1)}$ which
then ensure that $\E\xi^2=\infty$. This in combination with the
aforementioned lemma yields
\begin{equation}\label{r2t}
\lim_{t\to\infty}t^{-1/2}r_2(t)=\infty.
\end{equation}

Using again Theorem 3 on p.~580 and formula (8.15) on p.~315 in \cite{feller:1971} we obtain

\noindent $b(t)=c_\alpha(t)$ and $a(t)=0$ if $\alpha\in
(1/2,1)$;

\noindent $b(t)=c_1(t)$ and $a(t)=t\int_0^{c_1(t)}\P\{\bar \W_{\tau_1}>x\}{\rm
d}x$ if $\alpha=1$;

\noindent $b(t)=c_\alpha(t)$ and $a(t)=(\E \bar \W_{\tau_1})t$ if
$\alpha\in (1,2)$;

\noindent $b(t)=r_2(t)$ and $a(t)=(\E \bar \W_{\tau_1})t$ if
$\alpha=2$.

\noindent {\sc Case $\alpha\in (1/2, 1)$}. Repeating verbatim the proof of Theorem \ref{thm:main1T} we obtain
\begin{equation}\label{IIalp<1}
\frac{T_n}{(\mu\E\xi)^{-1/\alpha}c_\alpha(n)}~\dod~2\mathcal{S}_\alpha,\quad n\to\infty.
\end{equation}

\noindent {\sc Case $\alpha=1$}. We need an analogue of relation \eqref{ineq3}: for $r\in (1,2]$, as $n\to\infty$,
\begin{align*}
&\frac{a\big(t_n+O\big(t_n^{1/r}\big)\big)-a(t_n)}{b(t_n)}\\
&=\frac{t_n \int_{c_1(t_n)}^{c_1(t_n+O(t_n^{1/r}))}\P\{\bar
\W_{\tau_1}>x\}{\rm
d}x+O(t_n^{1/r})\int_0^{c_1(t_n+O(t_n^{1/r}))}\P\{\bar
\W_{\tau_1}>x \}{\rm d}x}{c_1(t_n)} \\
&\leq\frac{t_n
\P\{\bar
\W_{\tau_1}>c_1(t_n)\}(c_1(t_n+O(t_n^{1/r}))-c_1(t_n))}{c_1(t_n)}+
\frac{O(t_n^{1/r})\int_0^{c_1(t_n+O(t_n^{1/r}))}\P\{\bar
\W_{\tau_1}>x\}{\rm d}x}{c_1(t_n)}=o(1).
\end{align*}
The first summand tends to zero in view of two facts:
$\lim_{n\to\infty} t_n \P\{\bar \W_{\tau_1}>c_1(t_n)\}=1$ by the
definition of $c_1(t)$ and $\lim_{n\to\infty}
\big(c_1\big(t_n+O\big(t_n^{1/r}\big)\big)-c_1(t_n)\big)/
c_1(t_n)=0$ which is a consequence of regular variation of
$c_1(t)$. The second summand tends to zero because
$\int_0^{c_1(t)}\P\{\bar \W_{\tau_1}>x\}{\rm d}x$ is slowly
varying at $\infty$ as a superposition of the slowly varying and
regularly varying functions.

For Step 2 we need the following modified argument. In view of
$(\xi2)$ the function $\P\{\xi>t\}$ is regularly varying at
$\infty$ of index $-2$ and $\E\xi^2$ can be finite or infinite.
Therefore, $S_n$ satisfies the central limit theorem with
normalization sequence which is regularly varying at $\infty$ of
index $1/2$. Since $c_1(t)$ is regularly varying at $\infty$ of
order $1$ we infer $$\frac{S_n-(\E\xi)n}{c_1(n)}~\topr~0,\quad
n\to\infty$$ and thereupon
\begin{equation*}
\frac{T_{S_n}-(\me\xi)n-2a(\mu^{-1}n)}{\mu^{-1}c_1(n)}~\dod~2\mathcal{S}_1,\quad
n\to\infty.
\end{equation*}
To pass from this limit relation to the final result
\begin{equation}\label{IIalp=1}
\frac{T_n-n-2a((\mu\E\xi)^{-1}n)}{(\mu\E\xi)^{-1}c_1(n)}~\dod~2\mathcal{S}_1,\quad n\to\infty,
\end{equation}
that is, to realize Step 3, one can mimic the proof of Theorem \ref{thm:main1T}.

\noindent {\sc Case $\alpha\in (1,2]$}. While implementing Step 2
in the case $\alpha=2$ one uses the fact that according to
\eqref{r2t} $b(t)=r_2(t)$ satisfies $\sqrt{n}=o(r_2(\mu^{-1}n))$
as $n\to\infty$. Since the other parts of the proof of Theorem
\ref{thm:main1T} do not require essential changes we arrive at
\begin{equation}\label{IIalp>1}
\frac{T_n-(1+2(\mu\E\xi)^{-1}\me\bar\W_{\tau_1})n}{(\mu\E\xi)^{-1/\alpha}c_\alpha(n)}~\dod~2\mathcal{S}_\alpha,\quad
n\to\infty,
\end{equation}
when $\alpha\in (1,2)$, and
\begin{equation}\label{IIalp=2}
\frac{T_n-(1+2(\mu\E\xi)^{-1}\me\bar\W_{\tau_1})n}{(\mu\E\xi)^{-1/2}r_2(n)}~\dod~2\mathcal{N}(0,1),\quad
n\to\infty,
\end{equation}
when $\alpha=2$. The proof of Theorem \ref{thm:main2T} is complete.

\begin{proof}[Proof of Corollary \ref{cor:main2X}]

Since $(T_n)_{n\in\N_0}$ is an `inverse' sequence for
$(X_k)_{k\in\N_0}$ we can use a standard inversion technique (see,
for instance, the proof of Theorem 7 in \cite{feller1949}) to pass
from the distributional convergence of $T_n$, properly centered
and normalized, as $n\to\infty$ to that of $X_k$, again properly
centered and normalized, as $k\to\infty$. Additional complications
arising in the case $\alpha=1$ can be handled with the help of
arguments given in Section 3 of \cite{Anderson+Athreya:1988}.

Here are the limit relations for $X_k$, properly normalized and
centered, as $k\to\infty$ which correspond to \eqref{IIalp<1},
\eqref{IIalp=1}, \eqref{IIalp>1} and \eqref{IIalp=2}:

\noindent if $\alpha\in (1/2,1)$, then
\begin{equation}\label{IIIalp<1}
\P\{\bar \W_{\tau_1}>k\} X_k~\dod~ \mu \E\xi
(2S_\alpha)^{-\alpha};
\end{equation}

\noindent if $\alpha=1$, then
\begin{equation}\label{IIIalp=1}
\frac{X_k-s(k)}{t(k)}~\dod~-\mathcal{S}_1,
\end{equation}
where, with $m(t):=\int_0^t\P\{\bar \W_{\tau_1}>x\}{\rm d}x$ for
$t>0$ and $b:=(\mu\E\xi)^{-1}$,
$$s(k):=\frac{k}{1+2bm(c_1(bk/(1+2bm(bk))))},\quad
k\in\N$$ and
$$t(k):=\frac{c_1(k/m(k))}{1+2bm(k)},\quad k\in\N$$ (we do not write $2b m(k)$ instead of $1+2b m(k)$ because the case $\lim_{t\to\infty} m(t)=
\E\bar \W_{\tau_1}<\infty$ is not excluded);

\noindent if $\alpha\in (1,2)$, then
\begin{equation}\label{IIIalp>1}
\frac{X_k-(1+2(\mu\E\xi)^{-1}\me\bar\W_{\tau_1})^{-1}k}{c_\alpha(k)}~\dod~
-2(\mu\E\xi)^{-1/\alpha}(1+2(\mu\E\xi)^{-1}\me\bar\W_{\tau_1})^{-(1+1/\alpha)}\mathcal{S}_\alpha;
\end{equation}

\noindent if $\alpha=2$, then
\begin{equation}\label{IIIalp=2}
\frac{X_k-(1+2(\mu\E\xi)^{-1}\me\bar\W_{\tau_1})^{-1}k}{r_2(k)}~\dod~
2(\mu\E\xi)^{-1/2}(1+2(\mu\E\xi)^{-1}\me\bar\W_{\tau_1})^{-3/2}\mathcal{N}(0,1).
\end{equation}
The proof of Corollary \ref{cor:main2X} is complete.
\end{proof}

\subsection{Proof of auxiliary Lemmas \ref{lem:zmom}, \ref{lem:Wnu}, \ref{prop:main2} and \ref{lem:1}}\label{sec:proofs_lemmas}
\subsubsection{Proof of Lemma \ref{lem:zmom}}
\begin{proof}[Proof of Lemma \ref{lem:zmom}]
To prove \eqref{eq:5} we first represent $Z_{S_n-1}$ as a sum of
independent random variables
\begin{equation}\label{decomp}
Z_{S_n-1} = \sum_{j=1}^{\Z_{n-1}}V_j^{(n)} + \wt V^{(n)},\quad
n\in\N\quad \text{a.s.},
\end{equation}
where $V_j^{(n)}$ is the number of progeny residing in the
generation $S_n-1$ of the $j$th particle in the generation
$S_{n-1}$ and $\wt V^{(n)}$ is the number of progeny residing in
the generation $S_n-1$ of the immigrants arriving in the
generations $S_{n-1},\ldots, S_n-2$. For later use, we note that,
under $\P_\omega$,
\begin{equation}\label{distrequ}
V_j^{(n)}~\od~ \Zc(1,\xi_n-1)\quad \text{and}\quad \wt
V^{(n)}~\od~ \Zc_{\xi_n-1},\quad n\in\N,
\end{equation}
where $\omega$ is assumed independent of $(\Zc_k)_{k\in\N_0}$ a
Galton--Watson process with unit immigration and ${\rm Geom}(1/2)$
offspring distribution.

With the help of \eqref{decomp} we now write a standard
decomposition for the number of particles in the generation $S_n$
over the particles comprising the generation $S_{n-1}$ and their
offspring
\begin{equation}\label{eq:w1}
\Z_n =\sum_{j=1}^{\Z_{n-1}}\sum_{i=1}^{V_j^{(n)}} U^{(n)}_{i,j} +
\sum_{i=1}^{\wt V^{(n)}}\wt U_i^{(n)} +
U_0^{(n)}=:\sum_{j=1}^{\Z_{n-1}}\V_j^{(n)} + \wt \V^{(n)} +
U_0^{(n)},\quad n\in\N\quad \text{a.s.}
\end{equation}
Here, the notation $U_{i,j}^{(n)}$, $\wt U_i^{(n)}$, $U_0^{(n)}$
is self-explained. For instance, $U_0^{(n)}$ is the number of
offspring of the immigrant arriving in the generation $S_n-1$.
Observe that, under $\P_\omega$, $(U_{i,j}^{(n)})_{i,j\in \N}$,
$(\wt U_i^{(n)})_{i\in\N}$ and $U_0^{(n)}$ are independent with
distribution ${\rm Geom} (\lambda_n)$. In what follows, for
simplicity we omit the superscripts $(n)$: for instance, we write
$\V_j$ for $\V_j^{(n)}$ and similarly for the other variables. The
following formulas play an important role in the subsequent proof:
\begin{align}
\E_\omega [ U_0 | \Z_{n-1}] =\E_\omega U_0  =\rho_n,& \quad \E_\omega [U_0^2 | \Z_{n-1} | = \E_\omega U_0^2 = 2\rho_n^2 + \rho_n\notag \\
\E_\omega\big[ \V_i \big|  \Z_{n-1} \big] = \E V_i \cdot \rho_n =
\rho_n,&\quad \E_\omega\big[ \wt \V \big|  \Z_{n-1} \big] =
(\xi_n-1)\rho_n.\label{form}
\end{align}
The two cases $\kappa\in (0,1]$ and $\kappa \in (1,2]$ should be
treated separately.
\medskip

\noindent {\sc Case $\kappa \leq 1$}. By Jensen's inequality and
subadditivity of the function $s \mapsto s^\kappa$ on $[0,\infty)$
\begin{multline*}
\E_\omega[ \Z_n^\kappa | \Z_{n-1}]  \leq  \left(\E_\omega[ \Z_n |
\Z_{n-1}]\right)^\kappa = \left[\E_\omega \Big[
\sum_{j=1}^{\Z_{n-1}}\V_j + \wt \V + U_0 \Big| \Z_{n-1}
\Big]\right]^\kappa \\ \leq \Big(\Z_{n-1}\rho_n + (\xi_n-1)\rho_n
+ \rho_n\Big)^\kappa \leq \Z_{n-1}^\kappa\rho_n^\kappa +
\xi_n^\kappa\rho_n^\kappa.
\end{multline*}
Taking the expectations we obtain
\begin{equation*}
\E \Z_n^\kappa \le \gamma \E \Z_{n-1}^\kappa + \E(\rho\xi)^\kappa
\end{equation*}
which entails \eqref{eq:5}.

\noindent {\sc Case $\kappa \in (1,2]$}. An application of
conditional Jensen's inequality yields
\begin{equation} \label{eq:w7}
\E_\omega \Z_n^\kappa  = \E_\omega \Big[   \E_\omega\big[
\Z_n^\kappa \big|  \Z_{n-1} \big] \Big] \le  \E_\omega \Big[
\Big( \E_\omega\big[ \Z_n^2 \big|  \Z_{n-1} \big]\Big)^{\kappa/2}
\Big].
\end{equation}
To estimate the conditional second moment we represent it as
follows
\begin{align*}
\E_\omega\big[ \Z_n^2 \big|  \Z_{n-1} \big] & =  \E_\omega\bigg[
\bigg(\sum_{j=1}^{\Z_{n-1}} \V_j + \wt \V + U_0  \bigg)^2 \bigg|
\Z_{n-1} \bigg]
= \sum_{1\le i\not= j \le \Z_{n-1}} \E_\omega\big[ \V_i \big| \Z_{n-1} \big] \E_\omega\big[ \V_j \big| \Z_{n-1} \big]\\
& + \sum_{j=1}^{\Z_{n-1}} \E_\omega\big[ \V_j^2 \big|  \Z_{n-1} \big]+ 2 \E_\omega[ \widetilde\V +U_0| \Z_{n-1}]
\E_\omega\bigg[\sum_{j=1}^{\Z_{n-1}} \V_j \bigg|  \Z_{n-1} \bigg] +    \E_\omega\big[ \wt\V^2 \big|  \Z_{n-1} \big]\\
&+    \E_\omega\big[ U_0^2 \big| \Z_{n-1} \big] + 2
\E_\omega\big[ \wt\V \big| \Z_{n-1} \big]    \E_\omega\big[ U_0
\big|  \Z_{n-1} \big].
\end{align*}
Appealing now to \eqref{form} we conclude that
\begin{equation}\label{eq:aux1}
\E_\omega\big[ \Z_n^2 \big|  \Z_{n-1} \big] \le  \Z_{n-1}^2
\rho_n^2 + \Z_{n-1} \E_{\omega} \V_1^2 + 2\Z_{n-1} \xi_n
\rho_n^2+\E_{\omega} \wt \V^2 + 2\rho_n^2 + \rho_n + 2
\xi_n\rho_n^2.
\end{equation}
Plugging the last inequality into \eqref{eq:w7} and using
subadditivity once again we obtain
\begin{multline}\label{eq:lem_mom_proof1}
\E\Z_n^\kappa\le\gamma \E
\Z_{n-1}^\kappa+\bigg(\E\Z_{n-1}^{\kappa/2}\cdot \E \Big[ \big(
\E_{\omega} \V_1^2 \big)^{\kappa/2} \Big] + 2\E
\Z_{n-1}^{\kappa/2}\cdot \E \xi^{\kappa/2}\rho^\kappa\\+ \E\Big[
\big(\E_\omega\wt\V^2 \big)^{\kappa/2}
\Big]+2\gamma+\E\rho^{\kappa/2} + 2 \E
\xi^{\kappa/2}\rho^\kappa\bigg).
\end{multline}
Next, we check that
\begin{equation}\label{eq:w8} \E \Big[ \big(
\E_{\omega} \V_1^2 \big)^{\kappa/2} \Big] <\8 \quad \mbox{ and
}\quad \E \Big[ \big(\E_\omega \wt\V^2 \big)^{\kappa/2} \Big]< \8.
\end{equation}
With the help of $$\E_\omega V_i=1 \quad \text{and}\quad {\rm
Var}_{\omega}V_i= 2(\xi_n-1)$$ which is a consequence of
\eqref{distrequ} and \eqref{eq:harris} we infer
\begin{align*}
\E \Big[ \big(\E_\omega \V_1^2 \big)^{\kappa/2} \Big] & = \E \bigg[ \bigg( \E_\omega
\bigg( \sum_{j=1}^{V_1} U_{1,j} \bigg)^2\bigg)^{\kappa/2} \bigg] = \E \bigg[ \bigg( \E_\omega
\bigg[  \sum_{1\le j\not=l \le V_1} U_{1,j} U_{1,l} + \sum_{j=1}^{V_1} U_{1,j}^2  \bigg]\bigg)^{\kappa/2} \bigg]\\
& \le \E \Big(\rho_n^2\E_{\omega} V_1^2 + (2
\rho_n^2+\rho_n)\E_\omega V_1\Big)^{\kappa/2}\le 2^{\kappa/2} \E
\xi^{\kappa/2}\rho^\kappa +\gamma+ \E \rho^{\kappa/2} <\8.
\end{align*}
A similar argument in combination with $\E_{\omega}\wt V =
\xi_n-1$ leads to the conclusion
\begin{equation*}
\E \Big[ \big( \E_{\omega} \wt\V^2 \big)^{\kappa/2} \Big]  =
\E\Big(\rho_n^2\E_{\omega} \wt V^2+
(\rho_n^2+\rho_n)\E_{\omega}\wt V \Big)^{\kappa/2}\le \E \Big[
\big( \rho_n^2 \E_{\omega} \wt V^2 \big)^{\kappa/2} \Big]+ \E
\xi^{\kappa/2}\rho^k+\me (\rho\xi)^{\kappa/2}.
\end{equation*}
Left with the proof of finiteness of the first term on the
right-hand side we represent $\wt V$ as a sum of independent
random variables $$\wt V=\wt V^{(n)} = \sum_{i=1}^{\xi_n-1} \wt
V^{(n)}_i,\quad n\in\N\quad \text{a.s.},$$ where, for $1\leq i\leq
\xi_n-1$, $\wt V^{(n)}_i$ is the number of progeny residing in the
generation $S_n-1$ of the immigrant arriving in the generation
$S_n-i$. Under $\P_\omega$, $\wt V_i^{(n)} \od \Zc(i,\xi_n-1)$,
where $\omega$ is assumed independent of $(\Zc(i,k))_{k\geq i}$.
With this at hand, an appeal to \eqref{eq:harris} yields
$$\E_{\omega}\wt V_i^2=\E_\omega [\Zc(i,\xi_n-1)]^2=\E_\omega
[\Zc(1,\xi_n-i)]^2 =2(\xi_n-i)+1\leq 2\xi_n$$ and $\E_{\omega}\wt
V_i=1$. Here and hereafter, to ease the notation we write $\wt
V_i$ for $\wt V_i^{(n)}$. Finally,
\begin{eqnarray*}
\E \Big[ \big(\rho_n^2 \E_{\omega} \wt V^2 \big)^{\kappa/2} \Big]
&=& \E\rho_n^\kappa \bigg(\E_{\omega} \bigg( \sum_{i=1}^{\xi_n-1}
\wt V_i \bigg)^2 \bigg)^{\kappa/2}=  \E\rho_n^\kappa
\bigg(\sum_{i=1}^{\xi_n-1} \E_{\omega} \wt V_i^2 + \sum_{1\le
i\not= j < \xi_n} \E_{\omega} \wt V_i \E_{\omega}\wt V_j
\bigg)^{\kappa/2}\\&\le& (5/2)^{\kappa/2}\E (\rho\xi)^\kappa <\8
\end{eqnarray*}
which finishes the proof of \eqref{eq:w8}.

Turning to the asymptotic behavior of $\E\Z_{n-1}^{\kappa/2}$
which appears on the right-hand side of \eqref{eq:lem_mom_proof1}
we consider yet another two cases.

\noindent {\sc Case $\gamma \leq 1$} in which $\E \rho^{\kappa/2}
< 1$. To see it, observe that when $\gamma=1$ the inequality
$\E\rho^{\kappa/2}<\gamma^{1/2}$ is strict because the assumption
$\E\log \rho\in [-\infty, 0)$ implies that the distribution of
$\rho$ is nondegenerate at $1$. By the already proved inequality
\eqref{eq:5} for powers $\leq 1$
$$\sup_n \E\Z_n^{\kappa/2}<\infty$$ which in combination with \eqref{eq:w8} shows that the expression
in the parentheses in \eqref{eq:lem_mom_proof1} is bounded. This
ensures \eqref{eq:5}.

\noindent {\sc Case $\gamma>1$}. By the already proved inequality
\eqref{eq:5} for powers $\leq 1$
$$\E \Z_n^{\kappa/2}\leq Ca_n,\quad n\in\N,$$
where $a_n=1$ or $=n$ or $=[\E \rho^{\kappa/2}]^n$ depending on
whether $\E \rho^{\kappa/2}<1$ or $\E \rho^{\kappa/2}=1$ or $\E
\rho^{\kappa/2}>1$. Since in any event $a_n\leq \gamma^{n/2}$ for
$n\in\N$, \eqref{eq:lem_mom_proof1} entails $$ \E
\Z_n^{\kappa}\leq \gamma\E \Z_{n-1}^{\kappa} +
C_1\gamma^{n/2},\quad n\in\N$$ for some $C_1>0$. Iterating this
yields $\E \Z_n^{\kappa}\leq C_2\gamma^n$ for some $C_2>1$ and all
$n\in\N$, thereby finishing the proof of \eqref{eq:5} in the case
$\gamma> 1$ and in general.

To prove \eqref{eq:lu1} we use a decomposition $\W_1 = W_{\xi_1-1}
+ \Z_1$ a.s. Inequality \eqref{eq:5} tells that we are left with
checking that $$\E W_{\xi_1-1}^\kappa <\8.$$ Since, under
$\P_\omega$, $W_{\xi_1-1}\od \Wc_{\xi_1-1}$, where $\omega$ is
assumed independent of $(\Wc_n)_{n\in\N_0}$, an application of
Lemma \ref{lem:exp_moments} yields
\begin{equation*}
\E[W^\kappa_{\xi_1-1}] = \sum_{j\geq 0} \E\big[ {\bf
1}_{\{\xi_1=j+1\}} (\Wc_j)^\kappa \big]\le C\sum_{j\geq 0}
\P\{\xi=j+1\} j^{2\kappa}= C \E (\xi-1)^{2\kappa} <\8
\end{equation*}
for a positive constant $C$. The proof of Lemma \ref{lem:zmom} is
complete.
\end{proof}

\subsubsection{Proof of Lemma \ref{lem:Wnu}}

\begin{proof}[Proof of Lemma \ref{lem:Wnu}] We start by proving \eqref{5??}. Pick
$\kappa_0\in (0,\kappa)$, put $p=\kappa/\kappa_0$ and choose $q$
such that $1/p+1/q=1$. According to Lemma \ref{lem:zmom},
\begin{equation}\label{inter2}
\E \Z_n^\kappa\leq C,\quad n\in\N
\end{equation}
for a positive constant $C$, whence
$$\E \bigg(\sum_{i=1}^n \Z_i\bigg)^\kappa \leq C\max(n^\kappa, n),\quad n\in\N$$ by subadditivity (convexity)
of $x\mapsto x^\kappa$ when $\kappa\in (0,1]$ ($\kappa\in (1,2]$).
By Lemma \ref{lem:nu}, $\P\{\tau_1=n\}\leq C_1e^{-C_2n}$ for all
$n\in\N$ and positive constants $C_1$ and $C_2$. With these at
hand, an application of H\"older's inequality yields
\begin{multline*}
\E\bigg[\bigg(\sum_{i=1}^{\tau_1}\Z_i\bigg)^{\kappa_0}\bigg]=
\sum_{n\geq 1}\E\bigg[\bigg(\sum_{i=1}^{\tau_1}
\Z_i\bigg)^{\kappa_0}{\bf 1}_{\{\tau_1=n\}}\bigg]\\
\le \sum_{n\geq 1} \bigg( \E \bigg(\sum_{i=1}^n \Z_i\bigg)^\kappa
\bigg)^{1/p} \cdot \P\{\tau_1=n\}^{1/q} \le C^{1/p}C_1 \sum_{n\geq
1}\max (n^{\kappa/p}, n^{1/p})e^{-C_2 n/q}<\infty.
\end{multline*}
The proof of \eqref{5??} is complete.

Turning to the proof of \eqref{eq:5?} we shall only show that
\begin{equation} \label{eq:w10}
\E\big( \W_n^\downarrow  \big)^\kappa\leq C,\quad n\geq 2
\end{equation}
for a positive constant $C$. Formula \eqref{eq:5?} then follows
with the help of the same argument (involving H\"older's
inequality) that we used while proving \eqref{5??}.

For $i\geq 2$ and $1\leq j\leq \Z_{i-1}=Z_{S_{i-1}}$, denote by
$U_j^{(i)}$ the number of progeny in the generations
$S_{i-1}+1,\ldots, S_i-1$ of the $j$th particle in the generation
$S_{i-1}$. Here is a representation of $\W_i^\downarrow$ which is
slightly different from the original definition
$$\W_i^\downarrow=\sum_{j=1}^{\Z_{i-1}}U^{(i)}_j,\quad i\geq 2.$$
Under $\P_\omega$, $U_j^{(i)}\od \Yc(1, \xi_i-1)$ for $i\geq 2$,
where we set $\Yc(1,0)=0$ and $\omega$ is assumed independent of
$(\Yc(1, k))_{k\in\N}$. In particular, according to
\eqref{eq:harris}
\begin{equation}\label{inter}
\E_\omega U^{(i)}_j = \xi_i-1\quad \text{and}\quad \E_\omega \big[U^{(i)}_j\big]^2\leq 3\xi_i^3.
\end{equation}

We shall treat the cases $\kappa\in (0,1]$ and $\kappa\in (1,2]$ separately.

\noindent {\sc Case $\kappa\in (0,1]$}. Under $\P_\omega$, for
$1\leq j\leq \Z_{i-1}$, $U_j^{(i)}$ is independent of $\Z_{i-1}$.
This in combination with \eqref{inter} proves that
$$\E_{\omega}[\W_i^\downarrow | \Z_{i-1} ]= \Z_{i-1}
(\xi_i-1),\quad i\geq 2.$$ Therefore, we obtain
$$\E\big( \W_i^\downarrow  \big)^\kappa  \leq \E\big[ [\E_\omega (\W_i^\downarrow | \Z_{i-1})]^\kappa\big]
\leq \E \xi^\kappa \E \Z_{i-1}^\kappa\leq C,\quad i\geq 2$$ having
utilized Jensen's inequality, \eqref{inter2} and the fact that
$\xi_i$ and $\Z_{i-1}$ are independent.

\noindent {\sc Case $\kappa\in (1,2]$}. Another application of
Jensen's inequality in combination with \eqref{inter},
\eqref{inter2} and subadditivity of $x\mapsto x^{\kappa/2}$ on
$[0,\infty)$ yields, for $i\geq 2$,
\begin{align*}
\E\big( \W_i^\downarrow \big)^\kappa & = \E\bigg[
\E_\omega\bigg[\bigg(\sum_{j=1}^{\Z_{i-1}} U^{(i)}_j
\bigg)^\kappa\Big| \Z_{i-1} \bigg] \bigg] \le \E\bigg(
\E_\omega\bigg[\bigg(\sum_{j=1}^{\Z_{i-1}} U^{(i)}_j \bigg)^2
\Big| \Z_{i-1} \bigg] \bigg)^{\kappa/2}\\&= \E\bigg[\bigg(
\sum_{1\le l\not= j \le \Z_{i-1}} \E_{\omega} U^{(i)}_l\E_{\omega}
U^{(i)}_j + \sum_{j=1}^{\Z_{i-1}} \E_\omega \big[U^{(i)}_j\big]^2
\bigg)^{\kappa/2} \bigg]\le \E \Z_{i-1}^\kappa  \E \xi^\kappa +
3\E \Z_{i-1}^{\kappa/2} \E\xi^{3\kappa/2}\leq C
\end{align*}
for a positive constant $C$. The proof of \eqref{eq:w10} is
complete.
\end{proof}

\subsubsection{Proof of Lemma \ref{prop:main2}}

We follow the method invented by Kesten et al.
\cite{kesten1975limit}. While some parts of the proofs given in
\cite{kesten1975limit} can be directly transferred to our setting,
the others require an additional work. We do not present all the
details of the proof focussing instead on the main differences.

We begin with a brief overview of the arguments leading to the
claim of Lemma~\ref{prop:main2}. Given a large positive constant
$A$, put $$\sigma=\sigma(A):=\min \{i\in\N: Z_j >A  \mbox{ for
some } j\le S_i\}.$$ Thus, we observe the process
$(Z_n)_{n\in\N_0}$ up to the first time $j$ when it exceeds the
level $A$ and then put $\sigma=i$ for the smallest index $i$
satisfying $S_i\ge j$. The following decomposition holds
$$\sum_{k=1}^{\tau_1}\big( \Z_k +  \W_k^\downarrow \big)=\sum_{k=1}^{\tau_1}\big( \Z_k +  \W_k^\downarrow \big){\bf 1}_{\{\sigma\geq \tau_1\}}+
\Big(\sum_{k=1}^{\sigma-1} \big( \Z_k +  \W_k^\downarrow \big) +
\mathbb{S}_\sigma + \sum_{i=\sigma+1}^{\tau_1}
\Y^{\downarrow}_i\Big){\bf 1}_{\{\sigma<\tau_1\}}\quad
\text{a.s.},$$ where $\mathbb{S}_\sigma$ is the number of
particles in the generation $S_\sigma$ plus their total progeny,
and, for $i\in\N$, $\Y^\downarrow_i$ is the total progeny in the
generations $S_i+1, S_i+2,\ldots$ of the immigrants arriving in
the generations $S_{i-1},\ldots, S_i-1$.

We intend to prove that the first, second and fourth summands on
the right-hand side of this decomposition are negligible in a
sense to be made precise, so that $$\sum_{k=1}^{\tau_1} \big( \Z_k
+ \W_k^\downarrow \big) \approx \mathbb{S}_\sigma {\bf
1}_{\{\sigma<\tau_1\}}.$$ In view of the definition of
$\mathbb{S}_\sigma$ and the fact that $\Z_{\sigma}=Z_{S_{\sigma}}
\approx A$ for $A$ as above one can expect that $\mathbb{S}_\sigma
{\bf 1}_{\{\sigma<\tau_1\}} \approx \Z_\sigma \E_\omega
[Y(S_\sigma, \infty)]{\bf 1}_{\{\sigma<\tau_1\}}$. We shall
demonstrate that the variable $\E_{\omega}[Y(S_\sigma, \infty)]$
is related to a random difference equation whose tail behavior
determines that of $\mathbb{S}_\sigma$.

To realize the programme just outlined we need two auxiliary
results.
\begin{lem} \label{lem:p3}
Assume that the assumptions of Lemma \ref{prop:main2} hold. Then,
for any $A>0$, as $x\to\infty$,
\begin{equation}\label{eq:aux}
\P\bigg\{\sum_{k=1}^{\tau_1} \big( \Z_k +  \W_k^\downarrow \big)>
x, \: \sigma \ge \tau_1 \bigg\}+ \P\bigg\{
\sum_{k=1}^{\sigma-1}\big(\Z_k +  \W_k^\downarrow \big)> x, \:
\sigma < \tau_1 \bigg\}  = o(x^{-\a}).
\end{equation}
\end{lem}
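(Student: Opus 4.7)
The plan is to establish both bounds via a Markov inequality at an exponent $p$ slightly larger than $\alpha$. For the first term in \eqref{eq:aux} it is enough to produce some $p>\alpha$ with
\[
\me\bigg[\bigg(\sum_{k=1}^{\tau_1}\big(\Z_k+\W_k^\downarrow\big)\bigg)^{\!p} {\bf 1}_{\{\sigma\geq \tau_1\}}\bigg]<\infty,
\]
and analogously for the second term with $\sum_{k=1}^{\sigma-1}$ in place of $\sum_{k=1}^{\tau_1}$ on the event $\{\sigma<\tau_1\}$; Markov then yields tails of order $x^{-p}=o(x^{-\alpha})$.

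The decisive feature of both truncation events is a pointwise bound on the end-of-block sizes: on $\{\sigma\geq\tau_1\}$ one has $\Z_{k-1}\leq A$ for every $k\in\{1,\ldots,\tau_1\}$ (using $\Z_0=0$), and on $\{\sigma<\tau_1\}$ one has $\Z_{k-1}\leq A$ for every $k\in\{1,\ldots,\sigma-1\}$. This is exactly what lets us push past the exponent ceiling of Lemma \ref{lem:zmom}: there the critical constant $\gamma=\me\rho^\alpha$ equals $1$ under $(\rho 1)$, so $\me\Z_n^p$ grows in $n$ for $p>\alpha$, whereas the hard truncation $\Z_{k-1}\leq A$ replaces the recursion $\me\Z_n^p\leq \gamma\,\me\Z_{n-1}^p+\text{(rest)}$ by a bound uniform in $n$.

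Concretely, I would reuse the single-block decompositions from the proofs of Lemmas \ref{lem:zmom} and \ref{lem:Wnu},
\[
\Z_k=\sum_{j=1}^{\Z_{k-1}}\V_j^{(k)}+\wt\V^{(k)}+U_0^{(k)},\qquad \W_k^\downarrow=\sum_{j=1}^{\Z_{k-1}}U_j^{(k)},
\]
to obtain, after taking powers and conditional expectations, an estimate of the form $\me[(\Z_k+\W_k^\downarrow)^p\mid \Z_{k-1}]\leq C_p\big(\Z_{k-1}^p R_k + R_k'\big)$, where $R_k,R_k'$ are block-level environmental functionals built from $\rho_k$, $\xi_k$ and single-line moments controlled via Lemma \ref{lem:exp_moments}. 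Restricting to $\{\Z_{k-1}\leq A\}$ makes the right-hand side a constant $C(A,p)$; summing over $k$ by convexity/H\"older and invoking the exponential moments of $\tau_1$ (resp.\ of $\sigma$) from Lemma \ref{lem:nu} then yields the desired uniform $p$-th moment. The argument for the second probability is identical after replacing $\tau_1$ by $\sigma-1$ and noting that on $\{\sigma<\tau_1\}$ one has $\sigma\leq\tau_1$, so $\sigma$ inherits exponential tails from Lemma \ref{lem:nu}.

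The main obstacle is the choice of the exponent $p>\alpha$. Under $(\rho 1)$ we have $\me\rho^\alpha=1$ with no slack, and the hypotheses $\me(\rho\xi)^\alpha<\infty$ and $\me\xi^{3\alpha/2}<\infty$ are critical. So the bookkeeping must arrange that every expectation arising from the block decomposition is expressible in terms of $\me(\rho\xi)^p$, $\me\xi^{cp}$ for a suitable $c$, and similar combinations, all of which stay finite for $p$ infinitesimally above $\alpha$, while avoiding isolated moments $\me\rho^p$ with $p>\alpha$ that may be infinite. Verifying that the single-line moments from Lemma \ref{lem:exp_moments} and their multiplicative couplings with $\rho_k$ combine only into admissible products of $\rho\xi$ and $\xi$ is where the technical work of the proof sits.
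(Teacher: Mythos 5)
Your truncation idea (exploit $\Z_{k-1}\le A$ on the two events, exponential moments of $\tau_1$, then Markov) is in the right spirit, and it does dispose of the second probability and of the first $\tau_1-1$ blocks, since there the events force \emph{every} generation up to $S_{\tau_1-1}$ (resp.\ $S_{\sigma-1}$) to be at most $A$, so those sums are bounded pointwise by $A S_{\tau_1}$ and $\E S_{\tau_1}^{3\alpha/2}<\infty$ (Lemma \ref{lem:future} plus the exponential moment of $\tau_1$) already gives $o(x^{-\alpha})$. The genuine gap is the pivot of your argument: the existence of an exponent $p>\alpha$ at which all required moments are finite. Under $(\rho 1)$ together with $\E(\rho\xi)^\alpha<\infty$ and $\E\xi^{3\alpha/2}<\infty$ there is \emph{no slack above} $\alpha$: nothing guarantees finiteness of $\E\rho^{p}$, $\E(\rho\xi)^{p}$ or $\E\xi^{3p/2}$ for any $p>\alpha$, so the bookkeeping you hope for (``everything in terms of $\E(\rho\xi)^p$, $\E\xi^{cp}$, finite for $p$ infinitesimally above $\alpha$'') cannot be arranged. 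Worse, the truncation does not control the last block at all: on $\{\sigma\ge\tau_1\}$ only generations up to $S_{\tau_1-1}$ are $\le A$, while the summand $\W^\downarrow_{\tau_1}$ is, given $\Z_{\tau_1-1}\le A$, dominated by at most $[A]$ copies of $V_1\od \Yc(1,\xi-1)$, and $\E V_1^{p}\gtrsim \E\xi^{2p-1}$ (survival of one ancestor to generation $n$ costs about $1/n$ and conditionally the progeny is of order $n^2$). Take $\alpha=2$ (allowed in Lemma \ref{prop:main2}) and $\xi$ with tail of index exactly $3$, $\rho$ independent of $\xi$: then $\E\xi^{3\alpha/2}<\infty$ and $\E(\rho\xi)^\alpha<\infty$, yet $\E V_1^{p}=\infty$ for every $p>\alpha$, so no exponent $p>\alpha$ makes your truncated moment finite and the Markov-at-$p>\alpha$ route breaks down. (Note also that Lemma \ref{lem:exp_moments} controls $\Wc_n$, the process \emph{with} immigration, of order $n^{2s}$; used on the last block it would demand $\E\xi^{2p}$-type moments, which is even further out of reach.)

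The paper's proof is built exactly to avoid any exponent above $\alpha$ where there is no slack: it splits off the final block, bounds the first $\tau_1-1$ blocks pointwise by $AS_{\tau_1}$ and uses the $3\alpha/2$-moment of $S_{\tau_1}$, and then bounds $\P\{\Z_{\tau_1-1}\le A,\,\W^\downarrow_{\tau_1}>x/2\}\le \P\{\sum_{i=1}^{[A]}V_i>x/2\}$ using only $\E V_1^{\alpha}<\infty$ (conditional Jensen plus $\E[\Yc(1,n)]^2\le 3n^3$, which is precisely what $\E\xi^{3\alpha/2}<\infty$ buys); a finite moment of order exactly $\alpha$ already forces a tail $o(x^{-\alpha})$, since $x^{\alpha}\P\{V_1>x\}\le \E V_1^{\alpha}{\bf 1}_{\{V_1>x\}}\to 0$. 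To repair your argument you would need to (i) treat the final block separately rather than through a uniform conditional $p$-th moment bound, and (ii) replace ``some $p>\alpha$'' by moment bounds at order exactly $\alpha$ (and $3\alpha/2$ where available) --- which is the paper's route.
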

\begin{proof}
We only give a proof for the first summand in \eqref{eq:aux}. The
second summand can be treated along similar lines.

The random variable $\tau_1$ has a finite exponential moment by
Lemma \ref{lem:nu}. Furthermore, $\tau_1$ does not depend on the
future of the sequence $(\xi_i)_{i\in\N}$. Therefore, the
assumption $\E\xi^{3\alpha/2}<\infty$ ensures that
\begin{equation}\label{eq:mom1}
\E [S_{\tau_1}]^{3\alpha/2}<\infty
\end{equation}
by Lemma \ref{lem:future}.

Write, for $x>0$,
\begin{multline*}
\P\bigg\{\sum_{k=1}^{\tau_1}\big(\Z_k+\W_k^\downarrow\big)>x,\,
\sigma\ge \tau_1\bigg\}\leq \P\bigg\{\sum_{k=1}^{\tau_1-1}
\big(\Z_k+\W_k^\downarrow\big)>x/2,\, \sigma\ge
\tau_1\bigg\}+\P\bigg\{\W_{\tau_1}^\downarrow>x/2,\,
\sigma=\tau_1\bigg\}\\ \leq \P\{AS_{\tau_1}>x/2\}+
\P\big\{\Z_{\tau_1-1} \le A,\, \W^\downarrow_{\tau_1}>x/2\big\}
\end{multline*}
and observe that, in view of \eqref{eq:mom1}, the first summand on
the right-hand side is $o(x^{-3\alpha/2})$ as $x\to\infty$. To
estimate the second term we use a decomposition
$$\W^\downarrow_{\tau_1} = \sum_{i=1}^{\Z_{\tau_1-1}} V_i\quad\text{a.s.},$$
where, for $1\leq i\leq \Z_{\tau_1-1}$, $V_i$ is the number of
progeny in the generations $S_{\tau_1-1}+1,\ldots, S_{\tau_1}-1$
of the $i$th particle in the generation $S_{\tau_1-1}$. We claim
that
\begin{equation}\label{eq:mom2}
\E V_1^\alpha<\infty.
\end{equation}
For the proof, note that $V_1\od \Yc(1,\xi_{\tau_1}-1)$, where
$\xi_{\tau_1}$ is assumed independent of $(\Yc(1,n))_{n\in\N}$.
Consequently, we obtain with the help of Jensen's inequality and
the inequality $\E [\Yc(1,n)]^2 \leq 3n^3$ for $n\in\N$ which is a
consequence of \eqref{eq:harris}
\begin{multline*}
\E V_1^\a=\E [\Yc(1,\xi_{\tau_1}-1)]^\alpha= \sum_{k\geq 0}\E
[\Yc(1,k)]^\alpha \P\{\xi_{\tau_1}-1=k\}\\ \leq \sum_{k\geq 0}
\left(\E [\Yc(1,k)]^2\right)^{\alpha/2} \P\{\xi_{\tau_1}-1=k\}
\leq 3\sum_{k\geq 0} k^{3\alpha/2}\P\{\xi_{\tau_1}-1=k\}\\ =3\E
[\xi_{\tau_1}-1]^{3\alpha/2}\leq 3\E
[S_{\tau_1}]^{3\alpha/2}<\infty,
\end{multline*}
where the last inequality is secured by \eqref{eq:mom1}.

With \eqref{eq:mom2} at hand, we immediately conclude that
\begin{equation*}
\P\big\{ \Z_{\tau_1-1} \le A,\, \W^\downarrow_{\tau_1} > x/2\big\}
\le \P\Big\{\sum_{i=1}^{[A]} V_i > x/2 \Big\}=o(x^{-\alpha}),\quad
x\to\infty
\end{equation*}
because $V_1$, $V_2,\ldots$ are identically distributed. The proof
of Lemma \ref{lem:p3} is complete.
\end{proof}

Before formulating another auxiliary result we recall from Section
\ref{subsec:notation} the notation $Y_1=\sum_{i\geq 1}Z(1,i)$,
where $Z(1,i)$ is the number of progeny residing in the $i$th
generation of the first immigrant, so that $Y_1$ is the total
progeny of the first immigrant.
\begin{lem}\label{lem:new}
Suppose that the assumptions of Lemma \ref{prop:main2} hold. Let
$(Y^\ast_j)_{j\in\N}$ be a sequence of $\P_\omega$-independent
copies of $Y_1$. Then there exists a constant $C>0$ such that
$$\P\bigg\{ \sum_{j=1}^N Y^\ast_j > x \bigg\} \le C N^\alpha
x^{-\alpha},\quad N\in\N.$$
\end{lem}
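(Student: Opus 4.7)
The main idea is to exploit that, since $Y_1$ is the total progeny of a single immigrant in a BPRE \emph{without} further (counted) immigration, the quenched mean $\E_\omega Y_1$ is itself a perpetuity with a Kesten-type tail. A direct calculation identical to the one carried out in the proof of Proposition~\ref{thm:lln} gives
$$\E_\omega Y_1 = \xi_1 - 1 + \sum_{k\geq 2}\xi_k \prod_{i=1}^{k-1}\rho_i \;\leq\; R \;:=\; \sum_{k\geq 1}\xi_k \prod_{i=1}^{k-1}\rho_i,$$
where $R \overset{d}{=} \xi + \rho R'$ with $R'$ an independent copy; that is, $R$ is the perpetuity associated with $(A,B)=(\rho,\xi)$. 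Under the assumptions of Lemma~\ref{prop:main2}, condition $(\rho 1)$ together with $\E(\rho\xi)^\alpha<\infty$ and $\E\xi^{3\alpha/2}<\infty$ (hence in particular $\E\xi^\alpha<\infty$) supplies all ingredients of the Kesten-Grincevi\v{c}ius-Goldie theorem, yielding a constant $K_0>0$ with $\P\{R>x\}\leq K_0 x^{-\alpha}$ for all $x>0$.

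The plan is then to split
$$\P\Big\{\sum_{j=1}^N Y^*_j > x\Big\} \leq \P\{2NR>x\} + \P\Big\{\sum_{j=1}^N Y^*_j > x,\, R\leq x/(2N)\Big\}.$$
The first term is bounded directly by $K_0 2^\alpha N^\alpha x^{-\alpha}$, already of the claimed form. For the second, I would observe that on the event $\{R\leq x/(2N)\}$ one has $N\E_\omega Y_1\leq NR\leq x/2$, so $\sum_j Y^*_j>x$ forces $\sum_j(Y^*_j-\E_\omega Y_1)>x/2$; since under $\P_\omega$ the variables $Y^*_j-\E_\omega Y_1$ are iid and centered, Chebyshev's inequality yields
$$\P_\omega\Big\{\sum_{j=1}^N(Y^*_j-\E_\omega Y_1)>x/2\Big\}\leq\frac{4N\,{\rm Var}_\omega Y_1}{x^2}.$$
Integrating against $P$ on $\{R\leq x/(2N)\}$ bounds the second term by $(4N/x^2)\,\E\bigl[{\rm Var}_\omega Y_1\cdot{\bf 1}_{\{R\leq x/(2N)\}}\bigr]$.

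The core technical step is the truncated second-moment estimate
$$\E\bigl[{\rm Var}_\omega Y_1\cdot{\bf 1}_{\{R\leq t\}}\bigr] \leq C\,t^{2-\alpha},\qquad t>0. \qquad (\ast)$$
Granting $(\ast)$ and substituting $t=x/(2N)$, the Chebyshev contribution becomes of order $N^{\alpha-1}x^{-\alpha}$, which is absorbed into the bound $K_0 2^\alpha N^\alpha x^{-\alpha}$ coming from the first summand. To prove $(\ast)$, my plan is to expand ${\rm Var}_\omega Y_1$ using $Y_1=\sum_{k\geq 1}Z(1,k)$ together with the branching identity $\E_\omega[Z(1,k)Z(1,l)] = \bigl(\prod_{m=k+1}^l(1-\omega_m)/\omega_m\bigr)\E_\omega Z(1,k)^2$ for $l\geq k$ and a recursive formula for $\E_\omega Z(1,k)^2$ of the same type as the one used implicitly in the proof of Lemma~\ref{lem:zmom}. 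In the sparse environment only the positions $m=S_j$ yield nontrivial multiplicative factors $\rho_j$, so ${\rm Var}_\omega Y_1$ becomes a double-sum perpetuity-type functional of $(\rho_i,\xi_i)$. A layer-cake integration on $\{R\leq t\}$, combined with the moment hypotheses $\E\xi^{3\alpha/2}<\infty$ and $\E(\rho\xi)^\alpha<\infty$ and the identity $\E\rho^\alpha=1$ from $(\rho 1)$, produces the critical exponent $2-\alpha$.

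The principal obstacle is the sharpness of the exponent in $(\ast)$: any value strictly greater than $2-\alpha$ would yield a fluctuation contribution asymptotically larger than $N^\alpha x^{-\alpha}$ and ruin the bound. Sharpness is ultimately dictated by the fact that, under $(\rho 1)$, the perpetuity $R$ has right tail of order exactly $x^{-\alpha}$, so partial $\gamma$-moments $\E[R^\gamma{\bf 1}_{\{R\leq t\}}]$ with $\gamma>\alpha$ scale like $t^{\gamma-\alpha}$; this scaling propagates through the double-sum expression for ${\rm Var}_\omega Y_1$ and yields the exponent $2-\alpha$ with no loss.
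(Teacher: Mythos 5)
Your architecture (extract the $N^\alpha x^{-\alpha}$ from $\P\{NR>x\}$ via the Kesten tail of the perpetuity, then kill the fluctuations by a quenched Chebyshev bound at scale $x/N$) stands or falls with the truncated-variance estimate $(\ast)$, and $(\ast)$ is false on most of the range $\alpha\in(0,2]$ that Lemma \ref{prop:main2} covers. Decompose the quenched variance of $Y_1$ according to the generation at which the branching noise is created: writing $\mu_n=\E_\omega Z(1,n)$, $\Pi_{j-1}=\rho_1\cdots\rho_{j-1}$ and keeping, for each $j$, only the variance injected by the reproduction step creating generation $S_{j-1}+1$ together with its forward propagation in mean (for geometric offspring the step variance dominates the squared step mean), one gets ${\rm Var}_\omega Y_1\ \ge\ \sum_{j\ge 1}\Pi_{j-1}\big(\wt R_j-1\big)^2$, with $\wt R_j$ the forward perpetuity \eqref{eq:perp} and your $R=\wt R_1=\sum_{i<j}\xi_i\Pi_{i-1}+\Pi_{j-1}\wt R_j$. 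The event $\{R\le t\}$ only constrains the products $\Pi_{j-1}\wt R_j$; at late blocks $\Pi_{j-1}$ is exponentially small, so on $\{R\le t\}$ the variable $\wt R_j$ can be as large as $t/\Pi_{j-1}$, and since $\E \wt R_j^2=\infty$ for $\alpha<2$, conditioning on everything except $\wt R_j$ and using $\E\big[\wt R^2{\bf 1}_{\{\wt R\le T\}}\big]\asymp T^{2-\alpha}$ shows that the $j$-th term contributes about $t^{2-\alpha}\,\E\big[\Pi_{j-1}^{\alpha-1}{\bf 1}_{\{R\le t\}}\big]$. These contributions are summable in $j$ only when $\E\rho^{\alpha-1}<1$, which by convexity of $s\mapsto\E\rho^s$ and $\E\rho^\alpha=1$ happens exactly for $\alpha\in(1,2)$. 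For $\alpha<1$ the individual terms even diverge as $j\to\infty$ (since $\Pi_{j-1}^{\alpha-1}\to\infty$ a.s.\ on $\{R\le t\}$, an event of positive probability), and for $\alpha=1$ they are bounded below uniformly in $j$; in both cases $\E\big[{\rm Var}_\omega Y_1{\bf 1}_{\{R\le t\}}\big]=+\infty$, so $(\ast)$ fails outright. At $\alpha=2$ it fails for a different reason: ${\rm Var}_\omega Y_1\ge(\E_\omega Y_1)^2=(R-1)^2$ and the tail of $R$ has index exactly $2$, so the truncated expectation grows like $\log t$, and the resulting fluctuation bound $(4N/x^2)\log\big(x/(2N)\big)$ is not $O(N^2x^{-2})$ for fixed $N$ and large $x$. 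Hence your scheme can at best be repaired on the window $\alpha\in(1,2)$, not on the full range of the lemma.

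This is exactly the trap the paper's proof is designed to avoid: it never takes moments of order larger than $\alpha$ of any perpetuity-type quantity. Writing $\xi_k=\wt R_k-\rho_k\wt R_{k+1}$, it represents $\sum_{j\le N}Y^\ast_j$ as a series in which each heavy-tailed factor $\wt R_{k+1}$ enters \emph{linearly}, multiplied by the coefficient $\sum_{j\le N}\big(Z_j(1,S_k)-\rho_k Z_j(1,S_{k-1})\big)$, which is a sum of quenched-centered iid terms. The tail $x^{-\alpha}$ is then pulled out of $\wt R_{k+1}$ by independence, while the coefficients are controlled only in $L^\alpha$, via conditional Jensen applied at exponent $\alpha/2\le 1$ to their quenched second moments and the contraction $\E\rho^{\alpha/2}<1$; this costs a factor $N^{\alpha/2}$ (plus $N^\alpha$ from the term $N\wt R_1$) and works uniformly for all $\alpha\in(0,2]$. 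If you wish to keep a ``mean plus fluctuation'' scheme, you would have to replace Chebyshev by a bound using only moments of order at most $\alpha$ of the fluctuations, which essentially reproduces the paper's computation.
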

\begin{proof}
For $k\in\N$, put
\begin{equation}\label{eq:perp}
\wt R_k = \xi_k +\rho_k \xi_{k+1}+\rho_k\rho_{k+1}\xi_{k+2} +
\ldots.
\end{equation}
Recall from Section \ref{sec:rde} that the so defined random
variable is called perpetuity. The Kesten-Grincevi\v{c}ius-Goldie
theorem says that if $(\rho 1)$ holds and $\E \xi^\alpha<\infty$,
then, for all $k\in\N$,
\begin{equation}\label{lem:kes}
\P\{\wt R_k > x\} \sim C x^{-\alpha}, \quad x\to\infty
\end{equation}
for some positive constant $C$ which does not depend on $k$.

Put $Z(1,0):=1$. For $i\in\N_0$, denote by $Z_1(1,i),
Z_2(1,i),\ldots$ $\P_\omega$-independent copies of $Z(1,i)$. Our
proof will be based on the following decomposition which holds
a.s.
\begin{equation*}
\sum_{j=1}^N Y^\ast_j = \sum_{j=1}^N \sum_{i\geq 1} Z_j (1,i) =
\sum_{j=1}^N \sum_{k\geq 1} \xi_k  Z_j(1, S_{k-1}) +\sum_{j=1}^N
\sum_{k\geq 1} \sum_{i=S_{k-1}}^{S_k-1} \big( Z_j(1,i) -
Z_j(1,S_{k-1})\big)=:\mathbb{U}_1 + \mathbb{U}_2.
\end{equation*}
Formula \eqref{eq:perp} implies that, for $k\in\N$, $\xi_k=\wt R_k
- \rho_k\wt R_{k+1}$, whence
\begin{align*}
\mathbb{U}_1 = \sum_{j=1}^N \sum_{k\geq 1} \xi_k  Z_j(1,S_{k-1}) &= \sum_{j=1}^N \sum_{k\geq 1} Z_j(1,S_{k-1})(\wt R_k - \rho_k\wt R_{k+1})\\
& = \sum_{k\geq 1} \bigg(\sum_{j=1}^N \big( Z_j(1,S_{k})- \rho_k
Z_j(1,S_{k-1})\big)\bigg)\wt R_{k+1}+  N \wt R_1.
\end{align*}
Since
\begin{equation}\label{eq:sum}
\sum_{k\geq 1}2^{-1}k^{-2}=\pi^2/12<1,
\end{equation}
and $\wt R_{k+1}$ and $(Z_j(1,S_k), Z_j(1,S_{k-1}),\rho_k)$ are
independent for each $j\in\N$ we obtain with the help of
\eqref{lem:kes}, for $x>0$,
\begin{align*}
&\P\big\{\mathbb{U}_1 >x \big\} \\&\le \sum_{k\geq 1} \P
\bigg\{\bigg|\sum_{j=1}^N \big(Z_j(1,S_{k})-\rho_k
Z_j(1,S_{k-1})\big)\bigg|
\wt R_{k+1} > x/(4k^2)\bigg\}+ \P\big\{N \wt R_1 > x/2 \big\}\\
&\le \sum_{k\geq 1}\int_{[0,\infty)}\P \bigg\{\bigg|\sum_{j=1}^N \big( Z_j(1,S_{k}) - \rho_k Z_j(1,S_{k-1})\big) \bigg| \in {\rm d}s \bigg\}
\P\big\{\wt R_{k+1}>x/(4sk^2)\big\}+ \P\big\{ N\wt R_1 > x/2  \big\}\\
&\le {\rm const}\cdot x^{-\a}\bigg(\sum_{k\geq 1} k^{2\a} \E
\bigg|\sum_{j=1}^N  \big( Z_j(1,S_{k}) - \rho_k
Z_j(1,S_{k-1})\big) \bigg|^\a+N^\alpha\bigg).
\end{align*}
Here and hereafter, ${\rm const}$ denote constants which may be
different on different appearances. To estimate the last term
observe that the equality
\begin{equation}\label{eq:mom}
\E_\omega Z(1,S_i)=\rho_1\cdots \rho_i,\quad i\in\N
\end{equation}
implies that, under $\P_\omega$, $\sum_{j=1}^N \big(Z_j(1,S_k) -
\rho_k Z_j(1,S_{k-1})\big)$ is the sum of iid centered random
variables. With this at hand an application of conditional
Jensen's inequality yields, for $k\in\N$,
\begin{align*}
\E \bigg|  \sum_{j=1}^{N}  \big( Z_j(1,S_{k}) - \rho_k
Z_j(1,S_{k-1}) \big) \bigg|^\a & \le \E \bigg[ \E_\omega
\bigg(\sum_{j=1}^{N}  \big( Z_j(1,S_{k}) - \rho_k Z_j(1,S_{k-1})
\big)\bigg)^2\bigg] ^{\a/2}\\
& = N^{\a/2} \cdot \E \bigg(\E_\omega  \big( Z(1,S_{k}) - \rho_k
Z(1,S_{k-1}) \big)^2   \bigg)^{\a/2}.
\end{align*}
For $k\in\N$ and $1\leq i\leq Z(1, S_{k-1})$, take the $i$th
particle among the progeny in the generation $S_{k-1}$ of the
first immigrant and denote by $V_i^{(k)}$ the number of progeny
residing in the generation $S_k$ of the chosen particle. Then
$$Z(1,S_k) =\sum_{i=1}^{Z(1,S_{k-1})}V_i^{(k)},\quad k\in\N\quad \text{a.s.},$$ and, under $\P_\omega$,
$V_1^{(k)}$, $V_2^{(k)},\ldots$ are independent copies of
$Z(S_{k-1},S_k)$ which are also independent of $Z(1,S_{k-1})$.
Hence,
\begin{equation*}
\E_\omega\Big[\big(Z(1,S_k)-\rho_k Z(1,S_{k-1})\big)^2 \big|
Z(1,S_{k-1})\Big] = Z(1,S_{k-1}) {\rm Var}_\omega
(V_1^{(k)}),\quad k\in\N.
\end{equation*}
Observe that, under $\P_\omega$,
$$V_1^{(k)}\od \sum_{m=1}^{\Zc(S_{k-1},S_k-1)} U^{(k)}_m,\quad k\in\N,$$ where $U_1^{(k)}$, $U_2^{(k)},\ldots$ are $\P_\omega$-independent
random variables with ${\rm Geom}(\lambda_k)$ distribution, and
$\omega$ is assumed independent of $(\Zc(i,j))_{j\geq i\geq 1}$.
This in combination with $\Zc(i,j)\od \Zc(1,j-i+1)$ for fixed
$j\geq i\geq 1$ and \eqref{eq:harris} gives, for $k\in\N$,
\begin{multline*}
{\rm Var}_\omega (V_1^{(k)})=\E_\omega \Zc(S_{k-1},S_k-1){\rm
Var}_{\omega}(U_1^{(k)})+(\E_\omega U_1^{(k)})^2 {\rm Var}_\omega
\Zc(S_{k-1},S_k-1)\\=(\rho_k+\rho_k^2)+2\rho_k^2 (\xi_k-1).
\end{multline*}
Equality \eqref{eq:mom} together with the last formula and
subadditivity of $x\mapsto x^{\alpha/2}$ on $[0,\infty)$ enables
us to conclude that
\begin{align*}
\P\big\{\mathbb{U}_1> x\} &\le \frac{{\rm const}}{x^{\a}} \bigg(\sum_{k\geq 1} k^{2\a} N^{\alpha/2}
\E\Big[ \big(\E_\omega Z(1,S_{k-1}) \big)^{\a/2} \big(\rho_k^{\a/2} + \rho_k^\a + \rho_k^{\alpha} 2^{\alpha}(\xi_k-1)^{\alpha/2}\big) \Big]
+N^\alpha\bigg)\\
&\le \frac{{\rm const}}{x^{\a}}\bigg(\sum_{k\geq 1} k^{2\a}
N^{\alpha/2}(\E \rho^{\alpha/2})^{k-1}+  N^{\alpha} \bigg)={\rm
const} \cdot N^\alpha x^{-\a}.
\end{align*}
To obtain the last inequality we have utilized $\E(\rho^\alpha
\xi^{\alpha/2})<\infty$ which is secured by the assumption
$\E(\rho\xi)^\alpha<\infty$ and the inequality $\E
\rho^{\alpha/2}<1$ which is a consequence of $(\rho 1)$.

To estimate $\mathbb{U}_2$ we proceed similarly but use
additionally Markov's inequality
\begin{align*}
\P\{\mathbb{U}_2 > x\} &= \P\bigg\{\sum_{j=1}^N \sum_{k\geq 1} \bigg(\sum_{i=S_{k-1}}^{S_k-1}(Z_j(1,i)-Z_j(1,S_{k-1}))\bigg) >x \bigg\}\\
&= \sum_{k\geq 1}\P\bigg\{\bigg|\sum_{j=1}^N\bigg(\sum_{i=S_{k-1}}^{S_k-1}(Z_j(1,i)-Z_j(1,S_{k-1}))\bigg)\bigg|>x/(2k^2)\bigg\}\\
&\leq {\rm const}\cdot x^{-\a} \sum_{k\geq 1}k^{2\a}
\E\bigg|\sum_{j=1}^N \bigg( \sum_{i=S_{k-1}}^{S_k-1} (Z_j(1,i) -
Z_j(1,S_{k-1}))\bigg) \bigg|^\a\\ &\leq {\rm const}\cdot
x^{-\a}\sum_{k\geq 1} k^{2\a}\E\bigg(\E_\omega \bigg(\sum_{j=1}^N
\sum_{i=S_{k-1}}^{S_k-1}(Z_j(1,i) - Z_j(1,S_{k-1}))
\bigg)^2\bigg)^{\a/2},\quad x>0.
\end{align*}

For $k\in\N$ and $1\leq r\leq Z(1, S_{k-1})$, take the $r$th
particle among the progeny in the generation $S_{k-1}$ of the
first immigrant and denote by $W_r^{(k)}$ the number of progeny
residing in the generations $S_{k-1},\ldots, S_k-1$ of the chosen
particle. Then
$$\sum_{i=S_{k-1}}^{S_k-1}(Z(1,i)-Z(1,S_{k-1}))=\sum_{r=1}^{Z(1,S_{k-1})}(W_r^{(k)}-(\xi_k-1)),\quad
k\in\N\quad\text{a.s.}$$ Furthermore, under $\P_\omega$,
$W_1^{(k)}$, $W_2^{(k)},\ldots$ are independent random variables
which are independent of $Z(1, S_{k-1})$ and have the same
distribution as $\Yc(1,\xi_k-1)$. Here, as usual, $\omega$ is
assumed independent of $(\Yc(1,n))_{n\in\N}$. Invoking
\eqref{eq:harris} we infer ${\rm Var}_\omega\,(W_r^{(k)})\leq
2\xi_k^3$ and further
\begin{align*}
\P\{\mathbb{U}_2>x\}&\le{\rm const}\cdot x^{-\a}\sum_{k\geq 1}
k^{2\a}N^{\alpha/2}\E\big[\big(\E_\omega Z(1,S_{k-1})
{\rm Var}_\omega (W_1^{(k)})\big)^{\alpha/2}\big] \\
&\le {\rm const}\cdot x^{-\a} \sum_{k\geq 1} k^{2\a} N^{\alpha/2}
(\E \rho^{\alpha/2})^k \E \xi^{3\a/2} \le {\rm const}\cdot
N^{\alpha/2} x^{-\a},\quad x>0.
\end{align*}
The proof of Lemma \ref{lem:new} is complete.
\end{proof}

\begin{proof}[Proof of Lemma \ref{prop:main2}]
Lemma \ref{lem:p3} implies that the contribution of particles
residing in the generations $1,2,\ldots, S_\sigma-1$ is negligible
in the sense that
\begin{equation}\label{eq:pt1}
\P\bigg\{\sum_{k=1}^{\tau_1}\big( \Z_k +  \W_k^\downarrow \big)
>x \bigg\} = \P\bigg\{\mathbb{S}_\sigma + \sum_{i=\sigma+1}^{\tau_1}
\Y^{\downarrow}_i>x,\, \sigma < \tau_1\bigg\} +
o(x^{-\alpha}),\quad x\to\infty.
\end{equation}
Next we prove that
\begin{equation} \label{eq:s3}
\lim_{A\to\infty}\limsup_{x\to\infty}x^\alpha
\P\bigg\{\sum_{i=\sigma(A)+1}^{\tau_1}\Y^{\downarrow}_i>x,\,\sigma(A)<\tau_1\bigg\}=0.
\end{equation}
This means that the contribution of the total progeny of
immigrants arriving in the generations $S_{\sigma(A)}$,
$S_{\sigma(A)}+1,\ldots$ is negligible whenever $A$ is
sufficiently large.

The random variables $\Y^\downarrow_1$, $\Y^\downarrow_2,\ldots$
are identically distributed and, for each $i\in\N$, the random
variables ${\bf 1}_{\{\sigma<i\leq \tau_1\}}={\bf
1}_{\{\sigma<i\}}\cdot(1-{\bf 1}_{\{\tau_1<i\}})$ and
$\Y^{\downarrow}_i$ are independent. Therefore,
\begin{eqnarray}\label{eq:cont}
\P\bigg\{\sum_{i=\sigma(A)+1}^{\tau_1}\Y^\downarrow_i>x,\,\sigma(A)<\tau_1\bigg\}
&\le& \sum_{i\geq 1} \P\big\{{\bf 1}_{\{\s(A) < i \le
\tau_1\}}\Y^\downarrow_i> x/(2 i^2)\big\}\notag\\&=&\sum_{i\geq 1}
\P\{\s(A)<i\le\tau_1\}\P\big\{\Y^\downarrow_1>x/(2 i^2) \big\}
\end{eqnarray}
having utilized \eqref{eq:sum}. Further, observe that
$\Y^\downarrow_1$ is the sum of $\Z_1$ $\P_\omega$-independent
copies of $Y_1=Y(1,\infty)$ which are also $\P$-independent of
$\Z_1$. Hence, using Lemma \ref{lem:new} yields
$$\P\{ \Y_1^\downarrow>x\}\leq C \E \Z_1^\alpha x^{-\alpha},\quad
x>0$$ for some positive constant $C$. The assumptions
$\E\xi^{3\alpha/2}<\infty$ and $\E(\rho\xi)^\alpha<\infty$
guarantee $\E \Z_1^\alpha<\infty$ by Lemma \ref{lem:zmom}.
Continuing \eqref{eq:cont} we obtain
\begin{multline*}
\P\bigg\{\sum_{i=\sigma(A)+1}^{\tau_1}\Y^\downarrow_i>x,\,\sigma(A)<\tau_1\bigg\}
\leq C\E \Z_1^\alpha x^{-\a}\sum_{i\geq 1} i^{2\a} \P\{\sigma(A)<
i \le \tau_1\}\\ \leq C_1\E \Z_1^\alpha x^{-\a}\E \tau_1^{2\a+1}
{\bf 1}_{\{\sigma(A)< \tau_1\}}
\end{multline*}
for a positive constant $C_1$, and \eqref{eq:s3} follows on
letting $A\to\infty$ and recalling that $\E \tau_1^{2\a+1}<\infty$
by Lemma \ref{lem:nu}.

Summarizing it remains to show that
$$\P\{\mathbb{S}_{\sigma(A)}>x,\,\sigma(A)<\tau_1\}~\sim~
C_2(\alpha)x^{-\alpha},\quad x\to\infty,$$ where $C_2(\alpha)$
does not depend on $A$. This can be accomplished by comparing
$\mathbb{S}_{\sigma(A)}$ on the event $\{\sigma(A)<\tau_1\}$ with
$\Z_{\sigma(A)} \widetilde R_{\sigma(A)+1}$  along the lines of
Lemmas 4 and 6 in \cite{kesten1975limit}. We omit the details.
\end{proof}

\subsubsection{Proof of Lemma \ref{lem:1}}

\begin{proof}[Proof of Lemma \ref{lem:1}]
Recall that $$\bar \W_{\tau_1}=W_{S_{\tau_1}} =
\sum_{k=1}^{\tau_1} \W^0_k +  \sum_{k=1}^{\tau_1}(\Z_k +
\W^\downarrow_k)\quad\text{a.s.}$$ According to Lemma
\ref{prop:main2}, $$\P\bigg\{ \sum_{k=1}^{\tau_1} \big( \Z_k +
\W_k^\downarrow  \big)>x \bigg\} \sim C_2(\alpha) x^{-\a},\quad
x\to\infty.$$ By the same reasoning as in the proof of Proposition
\ref{prop:tail_main} (part ${\rm (C1)}$), Lemma \ref{lem:lu1} in
combination with Lemma \ref{lem:nu} and Corollary 3 in
\cite{denisov:foss:korshunov} entails $$
\P\bigg\{\sum_{k=1}^{\tau_1} \W^0_k > x\bigg\} \sim (\E\tau_1)(\E
\vartheta^{\alpha})C_{\ell}x^{-\alpha},\quad x\to\infty.$$ Thus to
prove the lemma it suffices to check that
\begin{equation}\label{eq:pt4}
\P\bigg\{\sum_{k=1}^{\tau_1} \W^0_k > x, \sum_{k=1}^{\tau_1} \big(
\Z_k + \W_k^\downarrow   \big) > x\bigg\} = o( x^{-\alpha}),\quad
x\to\infty,
\end{equation}
see, for example, Lemma B.6.1 in \cite{buraczewski2016stochastic}.

For the proof of \eqref{eq:pt4} we need a number of auxiliary
limit relations. First, according to Lemma \ref{lem:nu} there
exists a constant $C_1>0$ such that
\begin{equation}\label{eq:pt11}
\P\{ \tau_1> C_1 \log x\} = o(x^{-\alpha}),\quad x\to\infty.
\end{equation}
Further, we claim that for any $\delta\in (0,1)$ and large enough
$x$ the following inequalities hold uniformly in $k\in\N$
\begin{equation}\label{eq:pt12}
\P\big\{\W^0_k >x/(C_1\log x),\, \xi_k^2 \le x^{1-\delta}
\big\}\le {\rm const} \cdot x^{-(\alpha +\varepsilon_1)};
\end{equation}
\begin{equation}\label{eq:pt14}
\P\bigg\{\xi_k^2> x^{1-\delta},\, \sum_{j=1}^{(k-1)\wedge \tau_1}
\big( \Z_j + \W_j^\downarrow \big)>x/2\bigg\}\le {\rm const}\cdot
x^{-(\alpha + \varepsilon_1)};
\end{equation}
\begin{equation}\label{eq:pt13}
\P\big\{\xi_k^2> x^{1-\delta},\, \Z_{k-1}> x^{2\delta}\big\}\le
{\rm const}\cdot x^{-(\alpha + \varepsilon_1)},
\end{equation}
where $u\wedge v:=\min(u,v)$ and
$\varepsilon_1:=(\alpha(1-\delta))\wedge (\alpha\delta/2)>0$.

\noindent {\sc Proof of \eqref{eq:pt12}}. Fix any $s>0$ that
satisfies $\delta s>\alpha+\varepsilon_1$. Recall that, under
$\P_\omega$, $\W^0_k\od \Wc_{\xi_k-1}$, where $\omega$ is assumed
independent of $(\Wc_n)_{n\in\N_0}$. This in combination with
Markov's inequality yields
\begin{align*}
\P\big\{\W^0_k >x/(C_1\log x),\, \xi_k^2 \le x^{1-\delta} \big\}&=
\P\big\{\Wc_{\xi_k-1} > x/(C_1\log x),\, \xi_k^2 \le x^{1-\delta}
\big\}\\ & \leq \P\big\{\Wc_{[x^{(1-\delta)/2}]}> x/(C_1\log
x)\big\}\\ & \leq \frac{\E
(\Wc_{[x^{(1-\delta)/2}]})^s}{[x^{(1-\delta)/2}]^{2s}}\frac{(C_1\log
x)^s}{x^{\delta s}} \le {\rm const}  \cdot x^{-(\alpha
+\varepsilon_1)}
\end{align*}
having utilized boundedness of $\E (n^{-2}\Wc_n)^s$ for $n\in\N$,
see Lemma \ref{lem:exp_moments}.

\noindent {\sc Proof of \eqref{eq:pt14}}. For fixed $k\in\N$,
$\xi_k$ is independent of $\sum_{j=1}^{(k-1)\wedge \tau_1}
\big(\Z_j + \W_j^\downarrow \big)$. Using this, Lemma
\ref{prop:main2} and the assumptions of Lemma \ref{lem:1} we
conclude that
\begin{align*}
\P\bigg\{\xi_k^2> x^{1-\delta},\, \sum_{j=1}^{(k-1)\wedge \tau_1}
\big(\Z_j + \W_j^\downarrow \big)>x/2\bigg\} &\le \P\{\xi^2>
x^{1-\delta}\} \P\bigg \{\sum_{j=1}^{\tau_1}\big(\Z_j +
\W_j^\downarrow \big)>x/2\bigg\}\\&\sim~ 2^\alpha C_\ell
C_2(\alpha)x^{-\alpha}x^{-\alpha(1-\delta)}\le {\rm const}\cdot
x^{-(\alpha +\varepsilon_1)}.
\end{align*}

\noindent {\sc Proof of \eqref{eq:pt13}}. Observing that, for
every fixed $k\in\N$, $\xi_k$ is independent of $\Z_{k-1}$ and
invoking Lemma \ref{lem:zmom} with $\kappa =3\alpha/4$ we obtain
\begin{align*} \P\big\{\xi_k^2> x^{1-\delta},\,
\Z_{k-1}>x^{2\delta}\big\} &= \P\big\{\xi_k^2> x^{1-\delta}
\big\}\P\big\{ \Z_{k-1}>
x^{2\delta}\big\} \\
&\le {\rm const}\cdot C C_\ell x^{-\alpha(1-\delta)}
x^{-(3/2)\alpha\delta} \le {\rm const}  \cdot  x^{-(\alpha
+\varepsilon_1)}.
\end{align*}

Combining \eqref{eq:pt11}, \eqref{eq:pt12}, \eqref{eq:pt14} and
\eqref{eq:pt13} yields, for any $\delta\in(0,1)$,
\begin{align*}
\P\bigg\{\sum_{k=1}^{\tau_1}& \W^0_k > x,\,\sum_{j=1}^{\tau_1} \big( \Z_j + \W_j^\downarrow   \big) > x\bigg\}\\
&\overset{\eqref{eq:pt11}}{\le} \P\bigg\{\sum_{k=1}^{\tau_1}
\W^0_k > x,\,
\sum_{j=1}^{\tau_1} \big( \Z_j + \W_j^\downarrow   \big) > x,\, \tau_1\leq C_1 \log x\bigg\} + o(x^{-\alpha})\\
& \le \sum_{k\leq C_1 \log x} \P\bigg\{\W^0_k > \frac{x}{C_1\log
x},\,
\sum_{j=1}^{\tau_1} \big( \Z_j + \W_j^\downarrow   \big) > x,\, \tau_1\leq C_1 \log x\bigg\} + o(x^{-\alpha})\\
& \overset{\eqref{eq:pt12}}{\le} \sum_{k\leq C_1 \log x} \P\bigg\{
\xi_k^2 > x^{1-\delta},\,
\sum_{j=1}^{\tau_1} \big( \Z_j + \W_j^\downarrow\big) > x,\, \tau_1\leq C_1 \log x\bigg\} + o(x^{-\alpha})\\
& \overset{\eqref{eq:pt14}}{\le} \sum_{k\leq C_1 \log x}
\P\bigg\{\xi_k^2 > x^{1-\delta},\,
\sum_{j=k}^{\tau_1} \big( \Z_j + \W_j^\downarrow\big) > x/2,\, k \le \tau_1,\, \tau_1\leq C_1 \log x\bigg\} + o(x^{-\alpha})\\
&\overset{\eqref{eq:pt13}}{\le} \sum_{k\leq C_1 \log x} \P\bigg\{
\xi_k^2 > x^{1-\delta},\, \sum_{j=k}^{\tau_1} \big( \Z_j +
\W_j^\downarrow   \big) > x/2,\, k \le \tau_1,\, \Z_{k-1}\leq
x^{2\delta} \bigg\} + o(x^{-\alpha}).
\end{align*}
Now \eqref{eq:pt4} follows if we can show that for some $\delta\in
(0,1)$ the following inequality holds uniformly in $k$
\begin{equation}\label{eq:pt16}
\P\big\{ \xi_k^2 > x^{1-\delta},\, \Z_k + \W_k^\downarrow > x/4,\,
\Z_{k-1} \le x^{2\delta} \big\} \le {\rm const}\cdot
x^{-(\alpha+\varepsilon_2)}
\end{equation}
for large enough $x$ and some $\varepsilon_2>0$ to be specified
below, and that
\begin{equation}\label{eq:pt17}
\sum_{k\leq C_1\log x} \P\bigg\{ \xi_k^2 > x^{1-\delta},\,
\sum_{j=k+1}^{\tau_1} \big( \Z_j + \W_j^\downarrow   \big) >
x/4\bigg\}=o(x^{-\alpha}),\quad x\to\infty.
\end{equation}

\noindent {\sc Proof of \eqref{eq:pt16}}. Observe that $$ \Z_k +
\W_k^\downarrow = \sum_{i=1}^{\Z_{k-1}}V^{(k)}_i\quad
\text{a.s.},$$ where, for $k\in\N$ and $1\leq i\leq \Z_{k-1}$,
$V^{(k)}_i$ denotes the number of progeny residing in the
generations $S_{k-1}+1$ through $S_k$ of the $i$th particle in the
generation $S_{k-1}$. Clearly, for fixed $k\in\N$,
$V^{(k)}_1,\ldots, V^{(k)}_{\Z_{k-1}}$ are independent of
$\Z_{k-1}$ and have the same distribution as $$\Yc(1,\xi_k-1)
+\sum_{j=1}^{\Zc(1,\xi_k-1)}U^{(k)}_j,$$ where
$(\Yc(1,n))_{n\in\N}$ and $(\Zc(1,n))_{n\in\N}$ are assumed
independent of $(\xi_k, \rho_k)$, $U^{(k)}_1, U^{(k)}_2,\ldots$
have ${\rm Geom}(\lambda_k)$ distribution and, given
$(\xi_k,\rho_k)$, they are independent of $\Zc(1,\xi_k-1)$. In
particular, $\E\big(V^{(k)}_1|(\xi_k,\rho_k)\big)=\xi_k-1+\rho_k$
in view of \eqref{eq:harris}. With this at hand we obtain
\begin{align*}
&\P\big\{ \xi_k^2 > x^{1-\delta}, \Z_k + \W_k^\downarrow > x/4,
\Z_{k-1} \le x^{2\delta} \big\}\\&=\E{\bf 1}_{\{\Z_{k-1}\leq
x^{2\delta}\}}\P\bigg\{\xi_k^2
> x^{1-\delta}, \sum_{i=1}^{\Z_{k-1}}V^{(k)}_i >
x/4\bigg|\Z_{k-1}\bigg\}\\&\le \E\Z_{k-1}{\bf 1}_{\{\Z_{k-1}\leq
x^{2\delta}\}}\P\bigg\{\xi_k^2 > x^{1-\delta}, V^{(k)}_1 >
x/(4\Z_{k-1})\bigg|\Z_{k-1}\bigg\}\\&\leq x^{2\delta} \P\big\{\xi_k^2 > x^{1-\delta}, V^{(k)}_1> x^{1-2\delta}/4 \bigg\}\\
&\leq {\rm const}\cdot x^{2\delta} \E\bigg[{\bf 1}_{\{\xi_k^2 >
x^{1-\delta}\}}
\E\bigg[\frac{(V^{(k)}_1)^r}{x^{r(1-2\delta)}}\bigg|(\xi_k,\rho_k)\bigg]\bigg]\\&\leq
{\rm const}\cdot x^{2\delta-r(1-2\delta)} \E\Big[{\bf
1}_{\{\xi_k^2 > x^{1-\delta}\}}
\big( \E[ V^{(k)}_1 | (\xi_k,\rho_k)] \big)^r\Big]\\
&\le {\rm const}\cdot x^{2\delta-r(1-2\delta)} \E\Big[ {\bf
1}_{\{\xi_k^2 > x^{1-\delta}\}}  (\xi_k + \rho_k)^r\Big]
\end{align*}
for $k\in\N$, large enough $x$ and any $r\in (0,1]$, having
utilized conditional Jensen's inequality for the penultimate step.
By assumption $\E\rho^\gamma<\infty$ and $\E\xi^\gamma<\infty$ for
some $\gamma\in (\alpha, 2\alpha)$. Taking $r\in (0,\gamma)$ and
applying H\"older's inequality with parameters $\gamma/(\gamma-r)$
and $\gamma/r$ we arrive at
\begin{align*}
\P\big\{ \xi_k^2 > x^{1-\delta}, \Z_k + \W_k^\downarrow > x/4,
\Z_{k-1} &< x^{2\delta}  \big\}\\&\le {\rm const} \cdot \big( \E
\xi_k^{\gamma} + \E \rho_k^{\gamma} \big)^{r/\gamma}
x^{2\delta-r(1-2\delta)-(1-\delta)\alpha(1-r/\gamma)}.
\end{align*}
Pick any $\rho\in (0, (1-\alpha/\gamma)/(2+\alpha))$ and then any
$r\in (0,\gamma\wedge
((1-\alpha/\gamma-\rho(2+\alpha))/(\rho(2-\alpha/\gamma))))$.
Setting now $\delta=\rho r$ (so that $\delta\in (0,1)$) we obtain
\eqref{eq:pt16} with
$\varepsilon_2:=-\alpha-2\delta+r(1-2\delta)+(1-\delta)\alpha(1-r/\gamma)$.
Throughout the rest of the proof $\delta$ always denotes the
number chosen above.

\noindent {\sc Proof of \eqref{eq:pt17}}. For $k\in\N$ and $1\leq
i\leq \Z_k$, denote by $Y^{(k)}_i$ the total progeny of the $i$th
particle in the generation $S_k$. Further, for $k\in\N$ and $j\geq
k+2$, denote by $\W_j^\downarrow(k)$ the number of progeny in the
generations $S_{j-1}, S_{j-1}+1,\ldots, S_j-1$ of the immigrants
arriving in the generations $S_k, S_k+1,\ldots, S_{j-1}-1$. Then
$$\sum_{j=k+1}^{\tau_1} \big( \Z_j + \W_j^\downarrow   \big) =
\sum_{i=1}^{\Z_k}Y^{(k)}_i
+\sum_{j=k+2}^{\tau_1}\W_j^\downarrow(k)\quad \text{a.s.}$$ and
thereupon, for $x>0$,
\begin{align*}
\P\bigg\{ \xi_k^2 > x^{1-\delta},\, \sum_{j=k+1}^{\tau_1}
\big(\Z_j + \W_j^\downarrow   \big) > x/4\bigg\}&\leq
\P\bigg\{\xi_k^2 > x^{1-\delta},\, \sum_{i=1}^{\Z_k} Y^{(k)}_i>
x/8\bigg\}\\&+\P\bigg\{ \xi_k^2> x^{1-\delta},\,
\sum_{j=k+2}^{\tau_1}\W_j^\downarrow(k)>x/8\bigg\}\\&=:I_1(x)+I_2(x).
\end{align*}
Since, for fixed $k\in\N$,
$\sum_{i=k+2}^{\tau_1}\W_i^\downarrow(k)$ is independent of
$\xi_k$ we obtain with the help of a crude estimate
$$\sum_{i=k+2}^{\tau_1}\W_i^\downarrow(k)\leq \sum_{i=1}^{\tau_1}\big(\Z_i+\W_i^\downarrow\big),\quad k\in\N\quad \text{a.s.}$$ and Lemma
\ref{prop:main2}
$$I_2(x)\leq \P\big\{ \xi_k^2 > x^{1-\delta} \big\} \P\bigg\{\sum_{i=1}^{\tau_1}\big(\Z_i+\W_i^\downarrow\big)>x/8\bigg\}
\le {\rm const} \cdot x^{-\alpha(1-\delta)} x^{-\alpha}$$ for
large enough $x$. Of course, this entails $\sum_{k\leq C_1\log
x}I_2(x)=o(x^{-\alpha})$ as $x\to\infty$.

To estimate $I_1(x)$ we note that, for fixed $k\in\N$, under
$\P\{\cdot|\omega, \Z_k\}$, $Y^{(k)}_1,\ldots, Y^{(k)}_{\Z_k}$ are
independent copies of $Y(1,\infty)$. Furthermore, these random
variables are $\P$-independent of $\Z_k$ and $\xi_k$. Invoking
Lemma \ref{lem:new} and conditional Jensen's inequality yields
\begin{align*}
\P\bigg\{ \xi_k^2 > x^{1-\delta}, \sum_{i=1}^{\Z_k} Y^{(k)}_i
>x/8\bigg\}&= \E\bigg[ {\bf 1}_{\{ \xi_k^2
> x^{1-\delta}\}}\P\bigg[ \sum_{i=1}^{\Z_k} Y^{(k)}_i >x/8\Big|\xi_k, \Z_k \bigg]\bigg]\\
&\le {\rm const}\cdot x^{-\alpha} \E\big[ {\bf 1}_{\{\xi_k^2 >
x^{1-\delta}\}}\Z_k^\alpha\big]\\&= {\rm const}\cdot x^{-\alpha}
\E\left[ {\bf 1}_{\{\xi_k^2 > x^{1-\delta}\}}\E_\omega
\left[\Z_k^\alpha |\Z_{k-1}\right]\right]\\&\le {\rm const}\cdot
x^{-\alpha} \E\left[ {\bf 1}_{\{\xi_k^2 > x^{1-\delta}\}}
\left(\E_\omega\left[\Z_k^2|\Z_{k-1}\right]\right)^{\alpha/2}\right].
\end{align*}
Inequality \eqref{eq:aux1} was obtained in the proof of Lemma
\ref{lem:zmom} under the assumption $\kappa\in (1,2]$. However, by
the same reasoning it also holds for $\kappa\in (0,2]$. Using
\eqref{eq:aux1} in combination with the fact that $\xi\geq 1$
a.s.\ we infer
$$\left(\E_\omega\left[\Z_k^2|\Z_{k-1}\right]\right)^{\alpha/2} \le
{\rm const}\cdot \big(\Z_{k-1}^\alpha
(\rho_k\xi_k)^\alpha+\Z_{k-1}^{\alpha/2}\big((\rho_k\xi_k)^\alpha+(\rho_k\xi_k)^{\alpha/2}\big)+(\rho_k\xi_k)^\alpha+(\rho_k\xi_k)^{\alpha/2}\big)$$
and thereupon
\begin{align*}
\E\left[{\bf 1}_{\{\xi_k^2 >
x^{1-\delta}\}}\left(\E_\omega\left[\Z_k^2|\Z_{k-1}\right]\right)^{\alpha/2}\right]&
\leq {\rm const}\cdot \big(k \E (\rho\xi)^\alpha{\bf
1}_{\{\xi^2>x^{1-\delta}\}}+\E(\rho \xi)^{\alpha/2}{\bf
1}_{\{\xi^2>x^{1-\delta}\}}\big)\\&\leq {\rm const}\cdot
x^{-\varepsilon(1-\delta)/2}\big(k
\E\rho^\alpha\xi^{\alpha+\varepsilon}+\E\rho^{\alpha/2}\xi^{\alpha/2+\varepsilon}\big)\\&\leq
{\rm const}\cdot kx^{-\varepsilon(1-\delta)/2}
\end{align*}
by Lemma \ref{lem:zmom} and the assumption $\E\rho^\alpha
\xi^{\alpha+\varepsilon}<\infty$ for some $\varepsilon>0$. The
latter entails
$$
\sum_{k\leq C_1\log x}I_1(x)=o(x^{-\alpha}),\quad x\to\infty.
$$
The proof of Lemma \ref{lem:1} is complete.
\end{proof}

\appendix
\section{Appendix}

Lemma \ref{lem:future} is an important ingredient in the proof of
Proposition \ref{prop:tail_main}, part ${\rm (C1)}$. In its
formulation we use the notion of a random variable which does not
depend on the future of a sequence of random variables. The
corresponding definition can be found at the beginning of Section
\ref{sec:tails}.
\begin{lem}\label{lem:future}
Let $(\theta_i)_{i\in\N}$ be a sequence of iid nonnegative random
variables and $T$ a nonnegative integer-valued random variable
which does not depend on the future of the sequence
$(\theta_i)_{i\in\N}$. Assume that $\E\theta_1^s<\infty$ for some
$s>0$ and that $\E e^{\lambda T}<\infty$ for some $\lambda>0$.
Then $\E (\sum_{i=1}^T \theta_i)^s<\infty$.
\end{lem}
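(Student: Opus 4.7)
The proof naturally splits into the cases $s\in(0,1]$ and $s>1$, and both rely on one common ingredient extracted from the hypothesis on $T$.

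First I would record the basic independence identity
$$\E\bigl[\theta_k^s{\bf 1}_{\{T\ge k\}}\bigr]=\E\theta_1^s\cdot\P\{T\ge k\},\qquad k\in\N.$$
To see this, note that $\{T\ge k\}=\{T\le k-1\}^c$, and by the hypothesis that $T$ does not depend on the future of $(\theta_i)_{i\in\N}$, the indicator ${\bf 1}_{\{T\le k-1\}}$ is independent of $(\theta_j)_{j\ge k}$; in particular ${\bf 1}_{\{T\ge k\}}$ is independent of $\theta_k$. In parallel, Markov's inequality combined with $\E e^{\lambda T}<\infty$ yields $\P\{T\ge k\}\le e^{-\lambda k}\E e^{\lambda T}$, so both $\sum_{k\ge 1}\P\{T\ge k\}=\E T$ and $\sum_{k\ge 1}\P\{T\ge k\}^{1/s}$ are finite.

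For $s\in(0,1]$ I would then simply use the elementary subadditivity inequality $(\sum_i a_i)^s\le \sum_i a_i^s$ for nonnegative $a_i$ together with Tonelli's theorem and the identity above:
$$\E\Bigl(\sum_{i=1}^T\theta_i\Bigr)^s\le \E\sum_{k=1}^\infty \theta_k^s{\bf 1}_{\{T\ge k\}}=\E\theta_1^s\cdot\E T<\infty.$$
For $s>1$ I would write $\sum_{i=1}^T\theta_i=\sum_{k=1}^\infty\theta_k{\bf 1}_{\{T\ge k\}}$ and apply Minkowski's inequality in $L^s(\P)$ (with Fatou's lemma to pass to the infinite series):
$$\Bigl\|\sum_{i=1}^T\theta_i\Bigr\|_s\le \sum_{k=1}^\infty\bigl\|\theta_k{\bf 1}_{\{T\ge k\}}\bigr\|_s=(\E\theta_1^s)^{1/s}\sum_{k=1}^\infty\P\{T\ge k\}^{1/s},$$
which is finite by the summability statement above.

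The case $s>1$ is the one that carries any real difficulty. The natural temptation would be to invoke $(\sum_{i=1}^T\theta_i)^s\le T^{s-1}\sum_{i=1}^T\theta_i^s$ and then try to decouple $T^{s-1}$ from the sum via Hölder's inequality, but that route forces a higher moment of $\theta_1$ than is assumed. The Minkowski reduction sidesteps this by working only with the one-variable norms $\|\theta_k{\bf 1}_{\{T\ge k\}}\|_s$, where the pairwise independence of $\theta_k$ and ${\bf 1}_{\{T\ge k\}}$ (which is all the hypothesis actually gives us) is exactly what is needed, and the exponential integrability of $T$ provides the summable tail bounds.
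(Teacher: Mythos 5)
Your proof is correct, and for the hard case $s>1$ it takes a genuinely different route from the paper. The key facts you use are exactly the ones available: since $((\theta_j)_{j\le k-1},{\bf 1}_{\{T\le k-1\}})$ is independent of $(\theta_j)_{j\ge k}$, the indicator ${\bf 1}_{\{T\ge k\}}$ is indeed independent of $\theta_k$, so $\E\bigl[\theta_k^s{\bf 1}_{\{T\ge k\}}\bigr]=\E\theta_1^s\,\P\{T\ge k\}$, and the exponential moment of $T$ makes $\sum_k\P\{T\ge k\}^{1/s}$ finite; with the monotone approximation by finite sums, Minkowski's inequality then closes the argument in a few lines. The paper instead works pathwise: it uses $(x+y)^s\le x^s+sy(x+y)^{s-1}$ to get $R_{T\wedge n}^s\le s\sum_{i=1}^n\theta_i R_i^{s-1}{\bf 1}_{\{T\ge i\}}$, splits $R_i^{s-1}\le C_s(R_{i-1}^{s-1}+\theta_i^{s-1})$, exploits the independence of $\theta_i$ from $(R_{i-1},{\bf 1}_{\{T\ge i\}})$, and then controls $\E R_{i-1}^{s-1}{\bf 1}_{\{T\ge i\}}$ by H\"older's inequality together with the exponential decay of $\P\{T\ge i\}$ (note that $R_{i-1}$ and ${\bf 1}_{\{T\ge i\}}$ need not be independent, which is why H\"older appears there). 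Both arguments rest on the same two ingredients — the pairwise independence supplied by the ``no dependence on the future'' hypothesis and the light tail of $T$ — but your $L^s$-triangle-inequality reduction is shorter and avoids the iteration and the H\"older step entirely; the paper's pathwise method is more hands-on and would also survive with only a sufficiently high polynomial moment of $T$, though in fact your bound $\P\{T\ge k\}^{1/s}\le e^{-\lambda k/s}(\E e^{\lambda T})^{1/s}$ shows your route needs nothing beyond the stated hypotheses either. Your closing remark is also on point: the crude bound $(\sum_{i=1}^T\theta_i)^s\le T^{s-1}\sum_{i=1}^T\theta_i^s$ would force unwarranted joint moment or higher moment assumptions, and both your argument and the paper's are designed to avoid it.
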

\begin{proof}
Set $R_0:=0$ and $R_i:=\theta_1+\ldots+\theta_i$ for $i\in\N$. By
assumption, for fixed $i\in\N$, $\theta_i$ is independent of
$(R_{i-1}, {\bf 1}_{\{T\geq i\}})$.

The result is trivial when $s\in (0,1]$. Indeed, we use
subadditivity of $x\mapsto x^s$ on $[0,\infty)$ together with the
aforementioned independence to conclude that $$\E
\Big(\sum_{i=1}^T \theta_i\Big)^s\leq \sum_{i\geq 1}\E \theta_i^s
{\bf 1}_{\{T\geq i\}}=\E\theta_1^s \E T<\infty.$$

Assume now that $s>1$. Invoking the inequality
$$(x+y)^s\leq x^s+sy(x+y)^{s-1},\quad x,y\geq 0$$ which is secured
by the mean value theorem for differentiable functions we obtain
$$R_{T\wedge i}^s\leq R^s_{T\wedge (i-1)}+s\theta_i R_i^{s-1}{\bf
1}_{\{T\geq i\}},\quad i\in\N.$$ Iterating this yields
$$R_{T\wedge n}^s\leq s\sum_{i=1}^n \theta_i R_i^{s-1}{\bf
1}_{\{T\geq i\}},\quad n\in\N.$$ Therefore, it is enough to check
that
$$A:=\E \sum_{i\geq 1}\theta_i R_i^{s-1}{\bf 1}_{\{T\geq
i\}}<\infty.$$ Using once again the aforementioned independence
together with the inequality $$(x+y)^{s-1}\leq C_s
(x^{s-1}+y^{s-1}),\quad x,y\geq 0,$$ where $C_s:=\max(2^{s-2},1)$,
we infer $$A\leq C_s \E \sum_{i\geq 1}\theta_i
(R_{i-1}^{s-1}+\theta_i^{s-1}){\bf 1}_{\{T\geq i\}}=C_s \E
\theta_1\sum_{i\geq 1}\E R_{i-1}^{s-1}{\bf 1}_{\{T\geq i\}}+ C_s
\E \theta_1^s \E T.$$ Left with checking convergence of the series
we appeal to H\"{o}lder's inequality in conjunction with convexity
of $x\mapsto x^s$ on $[0,\infty)$ to get
$$\E R_{i-1}^{s-1}{\bf 1}_{\{T\geq i\}}\leq [\E R_{i-1}^s]^{(s-1)/s}[\P\{T\geq i\}]^{1/s}\leq i^{s-1}[\E\theta_1^s]^{(s-1)/s}[\P\{T\geq i\}]^{1/s}.$$
Since $[\P\{T\geq i\}]^{1/s}$ decreases at least exponentially in
$i$, $\E R_{i-1}^{s-1}{\bf 1}_{\{T\geq i\}}$ is the general term
of converging series. The proof of Lemma \ref{lem:future} is
complete.
\end{proof}

The remaining part of the Appendix is concerned with the proof of
Lemma~\ref{lem:nu}. In essence the lemma follows from the
arguments presented by Key~\cite{key1987limiting} who considered a
model very similar to ours. For $n\in\N$ and $1\leq k\leq n$, set
\begin{equation*}
\Z(k,n) = \sum_{j=S_{k-1}+1}^{S_k} Z(j, S_n)
\end{equation*}
and observe that, under $\P_\omega$, $\Z(1,n),\ldots, \Z(n,n)$ are
independent. The following representation holds
\begin{equation*}
\Z(0)=0,\quad \Z_n = \sum_{k=1}^{n-1}\Z(k,n) + \Z(n,n),\quad
n\in\N
\end{equation*}
which shows that $(\Z_n)_{n\in\N_0}$ is a branching process in a
random environment with the random number $\Z(k,k)$ of immigrants
in the $k$th generation. The basic observation for what follows is
that $(\Z_n)_{n \geq 0}$ has the structure similar to that of the
branching process investigated by Key~\cite{key1987limiting}. The
main difference manifests in the term $\Z(n,n)$ which is absent in
Key's model. It is curious that the branching process in
\cite{key1987limiting} is similar to our $(Z_n)_{n\in\N_0}$ in
that the immigrants arriving in the generation $n$ only affect the
system by their offspring residing in the generation $n+1$. In
particular, neither Key's process nor our $(Z_n)_{n\in\N_0}$
counts immigrants, whereas $(\Z_n)_{n\in\N_0}$ does.

Even though $(\Z_n)_{n \geq 0}$ and Key's process are slightly
different it is natural to expect that sufficient conditions
ensuring finiteness of power and exponential moments of the first
extinction time should be similar. While demonstrating that this
is indeed the case we shall only point out principal changes with
respect to Key's arguments.

Denote by
\begin{equation*}
p(n,k) = \P_{\omega} \{\Z(1,n)=k\: | \: \Z(1,n-1)=1 \}, \quad
n\geq 2,~k\in\N_0
\end{equation*}
and
\begin{equation*}
a(n,k) = \PP_{\omega}\{ \Z(n,n)=k\},\quad n\in\N,~ k\in\N_0
\end{equation*}
the quenched reproduction and immigration distribution in the
generation $n$, respectively. It can be checked that the mean of
the quenched reproduction distribution is
\begin{equation*}
M(n) =\sum_{k\geq 0}k p(n,k) = \E_\omega [\ZZ(1,n) |
\ZZ(1,n-1)=1]=\rho_n,\quad n\geq 2
\end{equation*}
and that the quenched expected number of immigrants is
\begin{equation*}
I(n) =\sum_{k\geq 0}k a(n,k)= \E_\omega [\Z(n,n)] =
\rho_n\xi_n,\quad n\in\N.
\end{equation*}

Lemma \ref{prop:AppProp} is a counterpart of Theorem 3.3
in~\cite{key1987limiting}.
\begin{lem}\label{prop:AppProp}
Assume that $\E\log \rho\in [-\infty, 0)$ and $\E\log^+ \xi
<\infty$. Then, for $k\in\N_0$, $\pi(k) = \lim_{n \to \infty}
\P\{\Z_n=k\}$ exists and defines a probability distribution on
$\N$. If additionally
\begin{equation}\label{eq:AppCond}
\P\{p(2,0)>0, \: a(2,0)>0\}>0,
\end{equation}
then $\pi(0)>0$.
\end{lem}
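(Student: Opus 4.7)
The plan is to apply the time-reversal strategy of Key~\cite{key1987limiting}, designed for a closely related branching process in random environment with immigration; the extra summand $\Z(n,n)$ present in our model is of the same nature as the other summands and does not affect the strategy.

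Starting from the decomposition $\Z_n=\sum_{k=1}^{n}\Z(k,n)$ with $\P_\omega$-independent summands, I would use the i.i.d.\ assumption on $((\xi_k,\lambda_k))_{k\in\mz}$ to equate the annealed law of $\Z_n$ with that of
\[
\Z_n^{\star}:=\sum_{k=1}^{n}\tilde\Z(k),
\]
where the roles of ``past'' and ``current'' environmental blocks are swapped so that, crucially, $\tilde\Z(k)$ depends only on the first $k$ blocks. Since $\tilde\Z(k)\ge 0$ and new blocks are appended as $n$ grows, $(\Z_n^{\star})_{n\in\N}$ is a.s.\ non-decreasing and converges monotonically to a limit $\Z_\infty^{\star}\in[0,\infty]$. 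Once $\Z_\infty^{\star}<\infty$ a.s., the convergence $\Z_n\dod\Z_\infty^{\star}$ follows, existence of $\pi(k):=\P\{\Z_\infty^{\star}=k\}$ is automatic, and $\sum_{k\ge 0}\pi(k)=1$ since $\Z_n$ is $\N_0$-valued.

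The main obstacle is showing $\Z_\infty^{\star}<\infty$ a.s.; this is where both moment hypotheses enter. The block-branching mean identity yields a bound of the form
\[
\E_\omega\tilde\Z(k)\le C\,\xi_k\prod_{i=1}^{k-1}\rho_i
\]
for some universal $C>0$. Combining the strong law for $(\log\rho_i)$ together with $\E\log\rho\in[-\infty,0)$, and the standard consequence of $\E\log^+\xi<\infty$ (namely $k^{-1}\log\xi_k\to 0$ $P$-a.s., via Borel--Cantelli), yields
\[
\limsup_{k\to\infty}\frac{1}{k}\log\E_\omega\tilde\Z(k)\le\E\log\rho<0\quad P\text{-a.s.},
\]
so $\sum_{k\ge 1}\E_\omega\tilde\Z(k)<\infty$ $P$-a.s. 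An application of Markov's inequality conditional on $\omega$ then delivers $\Z_\infty^{\star}<\infty$ $\P$-a.s.

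For the second claim, by $\P_\omega$-independence $\pi(0)=\E\prod_{k\ge 1}\P_\omega\{\tilde\Z(k)=0\}$. The summability proved above together with Markov's inequality gives $\sum_k(1-\P_\omega\{\tilde\Z(k)=0\})<\infty$ $P$-a.s., so the infinite product converges and is positive precisely on the set where every factor is strictly positive. Condition \eqref{eq:AppCond} — which says that with positive $P$-probability both the single-particle reproduction law and the within-block immigration law charge zero — combined with the i.i.d.\ structure of the environment produces, via a truncation/coupling argument of the same flavour as in \cite{key1987limiting}, a $P$-positive event on which every factor $\P_\omega\{\tilde\Z(k)=0\}$ is strictly positive, yielding $\pi(0)>0$ after integration against $P$.
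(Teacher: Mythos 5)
Your proposal follows, in substance, the same route as the paper: the paper's proof is deliberately a sketch that defers to the environment-reversal argument of Key \cite{key1987limiting} (his Lemmas 2.1--3.2 for existence of $\pi$, his Theorem 3.3 for $\pi(0)>0$), and what you spell out -- reversing the iid blocks so that $\Z_n$ has the annealed law of a nondecreasing sum $\sum_{k=1}^n\widetilde{\Z}(k)$ of quenched-independent contributions, then deducing a.s.\ finiteness of the monotone limit from the quenched means via the SLLN for $\log\rho$ and Borel--Cantelli for $\log^{+}\xi$ -- is exactly that argument. The one genuine difference is the second claim: the paper obtains $\pi(0)>0$ by rerunning Key's recursion with the one-step quantities $q(n,k)=p(n+1,0)^k a(n+1,0)$, whereas you use an infinite-product criterion over the reversed representation; your version is a bit more self-contained, the paper's requires only a one-line change to Key's proof.

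Two points in your write-up need tightening. First, the quenched mean is exactly $\E_\omega\widetilde{\Z}(k)=\rho_k\xi_k\prod_{i=1}^{k-1}\rho_i=\xi_k\prod_{i=1}^{k}\rho_i$ (from $M(n)=\rho_n$, $I(n)=\rho_n\xi_n$); your bound $C\,\xi_k\prod_{i=1}^{k-1}\rho_i$ with a universal $C$ is not valid when $\rho$ is unbounded, although the extra factor $\rho_k$ does not affect your $\limsup\frac1k\log$ computation. Second, and more importantly, the only place where \eqref{eq:AppCond} enters is your claim that with positive $P$-probability every factor $\P_\omega\{\widetilde{\Z}(k)=0\}$ is strictly positive, and there you appeal to an unspecified ``truncation/coupling argument''. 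This step should be closed explicitly, and it can be done in one line: every particle counted in $\widetilde{\Z}(k)$ descends through the population at block level one, so $\P_\omega\{\widetilde{\Z}(k)=0\}$ is bounded below, uniformly in $k$, by the quenched probability that the block-one immigrants leave no progeny at level one; this lower bound does not depend on $k$, has the law of $a(\cdot,0)$, and is positive on an event of positive $P$-probability by \eqref{eq:AppCond} (in the present model it is even a.s.\ positive, since the offspring laws are geometric -- this is how \eqref{eq:AppCond} is verified in the proof of Lemma \ref{lem:nu}). With these two repairs your argument is complete and matches the intended proof.
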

\begin{proof}[Sketch of proof]
As far as the first claim is concerned, the proofs of Lemmas~2.1,
2.2, 3.1, 3.2 in \cite{key1987limiting} only require inessential
changes concerning the range of summation. The second claim
follows after a minor alteration, namely the term $q(n,k)$
appearing in the proof of Theorem 3.3 in \cite{key1987limiting}
should be changed to
\begin{equation*}
q(n,k)=\P_\omega\{\Z_{n+1}=0 \: | \: \Z_n=k\}= p(n+1,0)^k
a(n+1,0),\quad n\in\N, k\in\N_0.
\end{equation*}
The sequence $(q(1,k))_{k\in\N_0}$ must be positive which
justifies condition \eqref{eq:AppCond}. The corresponding
condition in \cite{key1987limiting} is slightly different.
\end{proof}

We are ready to prove Lemma \ref{lem:nu}.
\begin{proof}[Proof of Lemma \ref{lem:nu}]
The present proof is very similar to that of Theorem~4.2
in~\cite{key1987limiting}. Put
\begin{equation*}
v(n):= \PP\{ \tau_1 >n \},\quad n\in\N_0
\end{equation*}
and
\begin{equation*}
V(x): = \sum_{n\geq 1} v(n)x^n,\quad x\geq 0
\end{equation*}
which may be finite or infinite. While finiteness of $\E\tau_1$ is
equivalent to $V(1)<\infty$, finiteness of some exponential moment
of $\tau_1$ is equivalent to $V(x)<\infty$ for some $x>1$.

For $n\in\N$, put
\begin{equation*}
h(k,n):= \PP\bigg\{ \Z(k,n)>0, \: \sum_{j=k+1}^n \Z(j,n)=0
\bigg\},\quad 1\leq k\leq n
\end{equation*}
(with the usual convention that $h(n,n)=\PP\{\Z(n,n)>0\}$) and
note that $h(k,n) = h(1,n-k+1)$ for $1\leq k\leq n$. Now we use a
decomposition
\begin{align*}
v(n)  =&  \PP \{ \tau_1>n, \: \Z_n>0 \} = \PP\bigg\{ \tau_1>n , \: \sum_{k=1}^n\Z(k,n)>0 \bigg\}\\
= &     \sum_{k=1}^{n-1}\P\bigg\{ \tau_1>n,\: \Z(k,n) >0, \:
\sum_{j=k+1}^n\Z(j,n)=0\bigg\} + \PP\{\tau_1>n,\:  \Z(n,n)>0\}.
\end{align*}
in combination with
\begin{align*}
& \P\bigg\{ \tau_1>n,\: \Z(k,n) >0, \:
\sum_{j=k+1}^n\Z(j,n)=0\bigg\} = \P\bigg\{ \tau_1>k-1,\: \Z(k,n)
>0, \: \sum_{j=k+1}^n\Z(j,n)=0\bigg\} \\& = \P\{ \tau_1>k-1 \}
\P\bigg\{ \Z(k,n) >0, \:
\sum_{j=k+1}^n\Z(j,n)=0\bigg\}=v(k-1)h(k,n) = v(k-1)h(1,n-k+1)
\end{align*}
which holds for $1\leq k\leq n$ to obtain
\begin{equation*}
v(n) = \sum_{k=0}^{n-1} v(k) h(1,n-k),\quad n\in\N.
\end{equation*}
This convolution equation is equivalent to
\begin{equation*}
V(x)= \frac{H(x)}{1-H(x)},\quad x\geq 0
\end{equation*}
(the possibility that both sides are infinite is not excluded),
where
\begin{equation*}
H(x) = \sum_{j\geq 1} h(1,j) x^j,\quad x\geq 0.
\end{equation*}

Now $\E\tau_1<\infty$ follows from
\begin{equation*}
H(1) = \sum_{j\geq 1} h(1,j) = \lim_{n \to \infty}
\PP\{\ZZ_n>0\}=1-\pi(0)
\end{equation*}
once we can show that $\pi(0)>0$. To this end, we recall that
$(Z_n)_{n\in\N_0}$ is governed by a geometric distribution, whence
$$p(n,0)\geq \lambda_n {\bf 1}_{\{\xi_n=1\}}+2^{-1}{\bf 1}_{\{\xi_n>1\}}\geq \lambda_n\wedge 1/2,\quad n\geq 2$$
and $$a(n,0)=\sum_{j\geq 1}\frac{\lambda_n}{j-(j-1)\lambda_n}{\bf
1}_{\{\xi_n=j\}} \geq \lambda_n \sum_{j\geq 1}j^{-1}{\bf
1}_{\{\xi_n=j\}},\quad n\in\N.$$ These inequalities ensure
\eqref{eq:AppCond} and thereupon $\pi(0)>0$ by
Lemma~\ref{prop:AppProp}.

To prove finiteness of some exponential moment pick $\delta\in
(0,1)$ such that
\begin{equation*}
\E(\rho \xi)^\delta<\infty \quad \mbox{and} \quad
r:=\E\rho^\delta<1.
\end{equation*}
Existence of such a $\delta$ is justified by assumptions and the
Cauchy-Schwarz inequality. In view of
\begin{align*}
h(1,j) & \leq \PP \{\ZZ(1,j) \geq 1\}\leq\E (\E_\omega
\Z(1,j))^\delta = \E(\rho \xi)^\delta r^{j-1}
\end{align*}
we infer that the radius of convergence of $H$ is greater than
one. This in combination with $H(1)<1$ implies that $H(x) <1$ and
thereupon $V(x)<\infty$ for some $x>1$.
\end{proof}

\section*{Acknowledgment}

D. Buraczewski and P. Dyszewski were partially supported by the
National Science Center, Poland (Sonata Bis, grant number
DEC-2014/14/E/ST1/00588). A. Marynych was partially supported by
the Return Fellowship of the Alexander von Humboldt Foundation. A
part of this work was done while A. Iksanov and A. Marynych were
visiting Wroclaw in February 2018. They gratefully acknowledge
hospitality and the financial support.

\bibliographystyle{plain}

\vspace{1cm}

\footnotesize

\textsc{Dariusz Buraczewski:} Mathematical Institute, University
of Wroclaw, 50-384 Wroclaw, Poland\\
\textit{E-mail}: \texttt{dbura@math.uni.wroc.pl}

\bigskip

\textsc{Piotr Dyszewski:} Mathematical Institute, University of
Wroclaw, 50-384 Wroclaw, Poland\\
\textit{E-mail}: \texttt{pdysz@math.uni.wroc.pl}

\bigskip

\textsc{Alexander Iksanov:} Faculty of Computer Science and Cybernetics, Taras Shev\-chen\-ko National University of Kyiv, 01601 Kyiv, Ukraine\\
\textit{E-mail}: \texttt{iksan@univ.kiev.ua}

\bigskip

\textsc{Alexander Marynych:} Faculty of Computer Science and Cybernetics, Taras Shev\-chen\-ko National University of Kyiv, 01601 Kyiv, Ukraine\\
\textit{E-mail}: \texttt{marynych@unicyb.kiev.ua}

\bigskip

\textsc{Alexander Roitershtein:} Department of Mathematics, Iowa State University, Ames, IA 50011, USA\\
\textit{E-mail}: \texttt{roiterst@iastate.edu}

\end{document}